\newtheorem{theorem}{Theorem}[section]
\newtheorem{corollary}[theorem]{Corollary}
\newtheorem*{cor}{Corollary}
\newtheorem{lem}[theorem]{Lemma}
\newtheorem{proposition}[theorem]{Proposition}
\newtheorem*{problem}{Problem}
\theoremstyle{definition}
\newtheorem{definition}[theorem]{Definition}
\theoremstyle{remark}
\newtheorem{remark}[theorem]{Remark}
\newtheorem{rem}[theorem]{Remark}
\numberwithin{equation}{section}
\newcommand{\R}{\mathbb{R}}
\newcommand{\tu}{\tilde{u}}
\newcommand{\tZ}{\tilde{Z}}
\newcommand{\tg}{\overline{g}}
\newcommand{\tp}{\overline{\pi}}
\newcommand{\tR}{\overline{R}}
\newcommand{\op}{\overline{\pi}}
\newcommand{\tG}{\overline{\Gamma}}
\newcommand{\oG}{\overline{\Gamma}}
\newcommand{\G}{\Gamma}
\newcommand{\cg}{\check{g}}
\newcommand{\ck}{\check{k}}
\newcommand{\be}{\begin{equation}}
\newcommand{\ee}{\end{equation}}
\newcommand{\bd}{\begin{displaymath}}
\newcommand{\ed}{\end{displaymath}}
\begin{document}
\date{}
\title[Localizing solutions of the Einstein constraint equations]
{Localizing solutions of the Einstein constraint equations}
\author{Alessandro Carlotto}
\address{ETH - Institute for Theoretical Studies  \\
                 ETH \\
                 Z\"urich, Switzerland}
\email{alessandro.carlotto@eth-its.ethz.ch}
\author{Richard Schoen}
\address{Department of Mathematics \\
                 University of California\\
                 Irvine, CA 92697}
\email{rschoen@math.uci.edu}

\begin{abstract} We perform an \textsl{optimal localization} of asymptotically flat initial data sets and construct data that have positive ADM mass but are exactly trivial outside a cone of arbitrarily small aperture. The gluing scheme that we develop allows to produce a new class of $N$-body solutions for the Einstein equation, which patently exhibit the phenomenon of \textsl{gravitational shielding}: for any large $T$ we can engineer solutions where any two massive bodies do not interact at all for any time $t\in(0,T)$, in striking contrast with the Newtonian gravity scenario. 
\end{abstract}

\maketitle

\tableofcontents

\section{Introduction}

Scalar curvature plays a fundamental role in General Relativity. If $(M,g,k)$ is a space-like slice inside a spacetime $(\mathbb{L},\gamma)$ satisfying the Einstein field equations then, as a result of the Gauss equations relating the extrinsic and intrinsic geometry of such slice, the first and second fundamental forms of $M$ $(\textrm{respectively} \ g \ \textrm{and} \ k)$ satisfy the system
\begin{equation*}
\begin{cases}
\frac{1}{2}\left(R_{g}+\left(Tr_{g}k\right)^{2}-\left\|k\right\|^{2}_{g}\right)=\mu \\
Div_{g}\left(k-\left(Tr_{g}k\right)g\right)=J,
\end{cases}
\end{equation*}
for $\mu$ (the \textsl{mass density}) and $J$ (the \textsl{current density}) suitable components of the stress-energy tensor $T$, which describes the matter fields.
In the most basic of all cases, namely when $k=0$ (in which case we will say that the data $(M,g,k)$ are \textsl{time-symmetric}) the previous system, known as Einstein constraint equations, reduces to the single equation
\[ R_{g}=2\mu
\]
namely to a scalar curvature prescription problem.
As a result of a general physical axiom, the \textsl{dominant energy condition}, which postulates the energy density measured by any physical observer to be non-negative one requires the functional inequality $\mu\geq \left|J\right|_{g}$ to be satisfied and hence in the time-symmetric case considered above one obtains the restriction
\[ R_{g}\geq 0
\]
so that asymptotically flat spaces are always studied under the assumption that their scalar curvature be non-negative. Moreover, in the vacuum case (namely when no sources are present, $T=0$) the Einstein constraint equations reduce to the requirement that the scalar curvature vanishes at all points of the manifold in question. 

These local conditions have dramatic global consequences. The most remarkable of all of them is the \textsl{Positive Mass Theorem} (see \cite{SY79}), which essentially states that for asymptotically flat manifolds the ADM mass (a scalar invariant, introduced in the context of the Hamiltonian formulation of General Relativity and defined by a certain flux integral at spatial infinity) is indeed non-negative, and equals zero if and only if our manifold $(M,g)$ is globally isometric to the Euclidean space. For the purposes of this article, we should state the following important consequence\footnote{This relies on the main theorem in \cite{Wit81}, due to the fact that $\mathbb{R}^n$ is a spin manifold for any $n\geq3$.}.

\begin{cor}
Let $n\geq 3$ and $g$ be a Riemannian metric on $\mathbb{R}^{n}$ having non-negative scalar curvature. Suppose that there exists a compact set $K$ such that $g$ is exactly Euclidean outside of $K$. Then $g$ is the Euclidean metric on the whole $\mathbb{R}^{n}$.
\end{cor}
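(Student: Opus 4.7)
The plan is to invoke the sharp form of the Positive Mass Theorem, as referenced in the footnote preceding the statement, and then upgrade the resulting isometry to an equality using flatness and a short continuation argument.

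The first observation is that $(\mathbb{R}^{n},g)$ is asymptotically flat in the strongest possible sense: since $g-\delta$ and all its derivatives vanish identically outside the compact set $K$, any decay hypothesis needed to apply the PMT is automatic. Second, I would evaluate the ADM mass directly from its defining flux integral at infinity, using coordinate spheres $S_{R}$ chosen large enough that $K\subset B_{R}$; on any such sphere the integrand is a linear combination of first partial derivatives of $g_{ij}$, all of which vanish pointwise because $g_{ij}\equiv\delta_{ij}$ there. Hence $m_{\mathrm{ADM}}(g)=0$.

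The rigidity half of the PMT now applies, legitimately so because $\mathbb{R}^{n}$ is spin for every $n\geq 3$ and Witten's argument in \cite{Wit81} yields the sharp statement. This produces a global isometry $\Phi:(\mathbb{R}^{n},\delta)\to(\mathbb{R}^{n},g)$; in particular $g$ is flat. To upgrade this to the pointwise equality $g=\delta$, I would exploit the fact that $(\mathbb{R}^{n},g)$ is complete, flat and simply connected, so that it admits a developing map $D:\mathbb{R}^{n}\to\mathbb{R}^{n}$ which is a global isometry onto $(\mathbb{R}^{n},\delta)$. Normalizing $D$ to agree with the identity at a basepoint in the unbounded component of $\mathbb{R}^{n}\setminus K$---where the identity is already a local isometry, since $g=\delta$ there---forces $D=\mathrm{id}$ throughout that component, and real-analyticity of isometries between flat Riemannian metrics then propagates the equality to all of $\mathbb{R}^{n}$, giving $g=\delta$.

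The only genuinely non-routine input is the rigidity clause of the PMT itself; the mass calculation and the flat-geometry continuation step are both straightforward. Consequently I do not anticipate any serious obstacle beyond invoking that rigidity in the appropriate regularity class.
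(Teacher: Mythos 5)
The first part of your argument is exactly the proof the paper has in mind (it gives only the footnote): $g$ is trivially asymptotically flat, the ADM flux integral over large coordinate spheres vanishes identically because $g\equiv\delta$ there, so the mass is zero, and the rigidity clause of the Positive Mass Theorem in Witten's spin form \cite{Wit81} applies since $\mathbb{R}^{n}$ is spin for every $n\geq 3$, giving that $(\mathbb{R}^{n},g)$ is globally isometric to Euclidean space, i.e.\ $g$ is flat. That is the content of the corollary, whose phrasing ``$g$ is the Euclidean metric'' is the usual shorthand for this conclusion (compare the rigidity statement of the PMT as quoted in the paper, which asserts a global isometry).

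Your final ``upgrade'' to the pointwise identity $g=\delta$ is, however, genuinely flawed, and indeed the pointwise statement cannot hold as you formulate it: take any diffeomorphism $\phi$ of $\mathbb{R}^{n}$ equal to the identity outside a ball but not a Euclidean isometry inside, and set $g=\phi^{*}\delta$. This $g$ is flat (so $R_{g}\geq 0$) and coincides with $\delta$ outside a compact set, yet $g\neq\delta$ at interior points; its normalized developing map is $D=\phi$, which is the identity on the unbounded component of $\mathbb{R}^{n}\setminus K$ without being the identity globally. The step that fails is the real-analytic continuation: $D$ is analytic only with respect to the flat structure induced by $g$ (say in $g$-normal or $g$-harmonic coordinates), whereas the background coordinates of $\mathbb{R}^{n}$ are merely smooth functions of these inside $K$, so agreement of $D$ with the identity on an open subset of the exterior region does not propagate across $K$. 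The remedy is simply to stop after invoking rigidity: zero mass plus Witten's theorem already gives that $g$ is flat and $(\mathbb{R}^{n},g)$ is isometric to $(\mathbb{R}^{n},\delta)$, which is all the corollary asserts and all that the rest of the paper uses.
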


With a little bit more effort, one can get a sharper form of the previous statement (see Proposition \ref{pro:cap}) which implies that in fact an asymptotically flat metric cannot be localized inside sets that are asymptotically too small, for instance a cylinder or a slab between two parallel planes. Thus one is naturally led to the following basic question.

\begin{problem}
What is the optimal localization of an asymptotically flat metric of non-negative scalar curvature?
\end{problem}

For instance, can we construct an asymptotically flat metric of non-negative scalar curvature, positive ADM mass and which is exactly trivial in a half-space?

In this article, we give an essentially complete (and highly surprising) answer to such question, by developing a systematic method to localize a given scalar flat, asymptotically flat metric inside a cone of arbitrarily small aperture. In fact, we perform this construction for the general system of Einstein constraint equations, thus fully dealing with the coupled nonlinearities of the problem. We refer the readers to the next section for a precise statement of our gluing theorem (Theorem \ref{thm:main}), which requires some notation to be introduced. Here we will limit ourselves to a couple of important remarks.

First: as an immediate consequence of our gluing scheme we are able to produce data that are flat on a half-space and therefore contain plenty of stable (in fact: locally area-minimizing) minimal hypersurfaces, a conclusion which comes quite unexpected based on various recent scalar curvature rigidity results both in the closed and in the free-boundary case (see the works \cite{BBN10, Nun13, MM15, Amb15}). As a result, combining this fact with with the rigidity counterparts obtained by the first-named author, contained in \cite{Car13} and \cite{Car14},  we are able to provide a rather exhaustive answer to the fundamental problem of existence of stable minimal hypersurfaces in asymptotically flat manifolds (more generally: marginally outer trapped hypersurfaces in initial data sets). Furthermore, the reader shall notice that, again in the time-symmetric case, our solutions contain outlying volume-preserving stable constant mean curvature spheres that enclose arbitrarily large volumes. A well-known formula \cite{FST09} due to X.-Q. Fan, P. Miao, Y. Shi and L.-F. Tam equating the ADM mass to the isoperimetric mass computed in terms of deficit of large coordinate balls (together with an important remark of G. Huisken) ensures that none of those CMC surfaces is in fact isoperimetric, at least for large enough enclosed volumes.

Second: one can essentially iterate our construction and get a new class of $N$-body solutions to the Einstein constraint equation which exhibit, following a definition by P. Chru\'sciel, the phenomenon of \textsl{gravitational shielding} in the sense that one can prepare data that do not have any interaction for finite but arbitrarily long times, in striking contrast with the Newtonian gravity scenario. This is the content of Theorem \ref{thm:Nbody}. Concerning the evolution of these solutions, we point out that our data contain an arbitrarily large piece of the original data, so if there is a trapped region in the original, it will still exist in the localized solution. In such a case the evolution will be incomplete by the Penrose singularity theorem.

\

Apart from this introduction, the present article consists of five sections: in Section \ref{sec:gluthm} we introduce some notation, state our gluing theorem and discuss the regularity of the data we produce, in Section \ref{sec:prelim} we give an outline of the proof and present the variational structure of the linearized constraints in suitable doubly weighted Sobolev spaces. The most technical parts of the proof are contained respectively in Section \ref{sec:lin} for what concerns the linear theory (where we prove the coercivity of the functional $\mathcal{G}$ by means of some basic estimates of independent interest) and in Section \ref{sec:nonlin} for what concerns the Picard scheme by which we solve the Einstein equations. Lastly, the construction of $N$-body solutions is presented in Section \ref{sec:Nbody}.

\

\

\textsl{Acknowledgments}. The authors would like to express their sincere gratitude to the anonymous referess for carefully proofreading the article and for suggesting a number of changes which significantly contributed to the improvement of this final version. We are also indebted to Christos Mantoulidis for the figures that appear in the article. During the preparation of this work, the authors were partially supported by NSF grant DMS-1105323.

\section{The gluing theorem}\label{sec:gluthm}

\subsection{Initial data}\label{subs:ids}

Let $(M,\cg,\ck)$ be an asymptotically flat initial data set for the Einstein equation of type $(n,l+1,\alpha,\check{p})$, so that:
\begin{itemize}
\item{$(M^{n}, \cg)$ is a $\mathcal{C}^{l+1,\alpha}$ complete asymptotically flat manifold with $\cg_{ij}(x)=\delta_{ij}+O^{l,\alpha}(|x|^{-\check{p}})$}
\item{$\ck$ is a symmetric $(0,2)-$tensor with $\ck_{ij}(x)=O^{l-1,\alpha}(|x|^{-\check{p}-1})$}
\item{the \textsl{vacuum} Einstein constraint equations are satisfied, namely
\begin{equation}
\begin{cases}
R_{\cg}+(Tr_{\cg}\ck)^{2}-\left\|\ck\right\|^{2}_{\cg}=0 \\
Div_{\cg}(\ck-(Tr_{\cg}\ck)\cg)=0.
\end{cases}
\end{equation}
}
\end{itemize}
Here $\frac{n-2}{2}<\check{p}\leq n-2$ and we are adopting the standard definitions of weighted H\"older spaces given, for instance, in \cite{EHLS11}. 
As announced in the introduction, this article is devoted to performing an \textsl{optimal localization} of such data by gluing them to the trivial triple $(\mathbb{R}^{n},\delta, 0)$ outside of a cone of given aperture.

\subsection{The content at infinity of a Riemannian metric}

In order to describe this problem of \textsl{optimal localization} in some detail, we need to start by giving the following relevant definition.

\begin{definition}
Let $(M,g,k)$ be an asymptotically flat initial data set with one end of type $(n,l,\alpha,p)$ for $n, l\geq 3$, $p>(n-2)/2$ and $\alpha\in(0,1)$. For $g$ such an asymptotically flat Riemannian metric, we set 
\bd
U=\left\{p\in M \ | \ Ric_{g}(p)\neq 0\right\}
\ed
and define \textsl{content at infinity} for the metric in question to be the \textsl{asymptotic size of the set} $U$, namely
\bd
\Theta(g)=\liminf_{\sigma\to\infty} \sigma^{1-n}\mathscr{H}^{n-1}\left(U\cap \mathbb{S}^{n-1}\left(\sigma\right)\right).
\ed
Here $\mathbb{S}^{n-1}(\sigma)=\left\{|x|=\sigma\right\}$ for asymptotically flat coordinates $\left\{x\right\}$.
\end{definition}

The \textsl{Positive Mass Theorem} gives, among its various implications, strong restrictions on the content at infinity of an asymptotically flat metric of non-negative scalar curvature.

\begin{proposition}\label{pro:cap}
Let $(M,g,k)$ be a time-symmetric, asymptotically flat initial data set of type $(n,l,\alpha,n-2)$ for $n,l\geq 3$ and $\alpha\in(0,1)$. Suppose that $n<8$ or $(M,g)$ is a spin manifold. If $R_{g}\geq 0$, then \textsl{either} $g$ is flat \textsl{or} $\Theta(g)>0$.
\end{proposition}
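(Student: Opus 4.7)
The plan is to argue by contrapositive: assume $g$ is not flat, and deduce $\Theta(g)>0$. Under the hypotheses of the statement (asymptotic flatness of type $(n,l,\alpha,n-2)$, $R_g\ge 0$, and either $n<8$ or $M$ spin) the Positive Mass Theorem with rigidity applies---via Schoen-Yau in the first case, Witten in the second---so the ADM mass $m(g)$ is nonnegative and vanishes if and only if $(M,g)$ is globally isometric to $(\R^n,\delta)$. Since $g$ is not flat, $m(g)>0$, and it is enough to exhibit a dimensional constant $C=C(n)>0$ for which $m(g)\le C\,\Theta(g)$: then $\Theta(g)=0$ would force $m(g)=0$, contradicting PMT rigidity.

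To obtain such a bound I would express the mass as a limit of surface integrals whose integrand is controlled pointwise by the Einstein tensor $E_g := \mathrm{Ric}_g - \tfrac12 R_g\, g$, tested against the dilation vector field $X=x^i\p_i$ attached to the asymptotically flat chart. A direct computation gives $\n^g_i X_j = g_{ij}+\tfrac{1}{2} x^k\p_k g_{ij}$ modulo an antisymmetric tensor, so the contracted Bianchi identity $\mathrm{div}_g E_g=0$ yields
\[
\mathrm{div}_g\bigl(E_g(X,\cdot)^{\sharp}\bigr) \;=\; -\tfrac{n-2}{2}R_g \;+\; \tfrac{1}{2}\, E_g^{ij}\,x^k\,\p_k g_{ij}.
\]
Applying Stokes' theorem on $B_{\sigma'}\setminus B_\sigma$, letting $\sigma'\to\infty$, and identifying $\lim_{\sigma'\to\infty}\int_{S_{\sigma'}}E_g(X,\nu_g)\, d\sigma_g$ with $-(n-1)(n-2)\omega_{n-1}m(g)$ through a standard expansion in harmonic coordinates (see e.g.\ Ashtekar-Hansen, Chru\'sciel-Herzlich, or Miao-Tam), one obtains for every large $\sigma$ the identity
\[
(n-1)(n-2)\omega_{n-1}\, m(g) \;=\; -\!\int_{S_\sigma}\!\! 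E_g(X,\nu_g)\,d\sigma_g \;+\; \tfrac{n-2}{2}\!\int_{M\setminus B_\sigma}\! R_g\, dV_g \;-\; \tfrac{1}{2}\!\int_{M\setminus B_\sigma}\! E_g^{ij} x^k \p_k g_{ij}\, dV_g.
\]
Under the critical decay $p=n-2$ the last integrand is $O(|x|^{-2n+2})$, so the corresponding tail is $O(\sigma^{2-n})$; combined with $R_g\in L^1(M)$ (automatic under $R_g\ge 0$ and finiteness of $m(g)$, via e.g.\ Witten's formula in the spin case) both tails tend to zero as $\sigma\to\infty$.

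To finish, note that since $n\ge 3$ the algebraic equivalence $E_g=0 \Leftrightarrow \mathrm{Ric}_g=0$ localizes the boundary integrand to $S_\sigma\cap U$. The decay $g-\delta=O^{l,\alpha}(|x|^{-(n-2)})$ yields $|E_g|=O(|x|^{-n})$, and $|X|=\sigma$ on $S_\sigma$, so
\[
\left|\int_{S_\sigma}\! E_g(X,\nu_g)\,d\sigma_g\right| \;\le\; C\,\sigma^{1-n}\,\mathscr{H}^{n-1}(U\cap S_\sigma).
\]
Combining this estimate with the mass identity and passing to the limit along a sequence realizing $\Theta(g)=\liminf_{\sigma\to\infty}\sigma^{1-n}\mathscr{H}^{n-1}(U\cap S_\sigma)$ gives $(n-1)(n-2)\omega_{n-1}\,m(g)\le C\,\Theta(g)$, as desired.

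The main obstacle I anticipate is the justification of the Einstein-tensor mass formula in the second display: although close analogues are classical in the literature on asymptotic conserved charges, the derivation in the critical decay regime $p=n-2$ requires careful bookkeeping of the quadratic correction produced by $\n^g X-\mathrm{Id}$ and the verification of $R_g\in L^1(M)$. A technically lighter route first conformally deforms $g$ to a scalar-flat metric $\widetilde g$ with $m(\widetilde g)\le m(g)$ and the same decay rate, applies the simpler $R=0$ version of the identity, and then tracks how the conformal factor modifies the set $U$; in both approaches the bulk of the argument---the pointwise estimate on $E_g$ combined with PMT rigidity---remains routine once the mass formula is in hand.
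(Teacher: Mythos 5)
Your proposal is correct and follows essentially the same route as the paper: both arguments rest on the Ricci-type flux representation of the ADM energy at infinity, the pointwise bound $|Ric_g|=O(|x|^{-n})$ at the critical decay $p=n-2$ which localizes the surface integrand to $U\cap \mathbb{S}^{n-1}(\sigma)$ and gives $\mathcal{E}\leq C\,\Theta(g)$, and the rigidity case of the Positive Mass Theorem to conclude $\mathcal{E}>0$ when $g$ is not flat. The only difference is that the paper quotes the Ricci expression of the mass as known, whereas you re-derive it via the divergence identity for $E_g(X,\cdot)$ with the dilation field, which is just the standard proof of that formula.
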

\begin{proof}
It is well-known that the ADM energy (we will use the standard definition, see for instance \cite{Bar86} or \cite{EHLS11}) of an asymptotically flat initial data set $(M,g,k)$ can be equivalently expressed in terms of the Ricci curvature of $g$: namely there exists a positive dimensional constant $\varpi_{n}$ (whose specific value depends on the normalization used in the definition of $\mathcal{E}$) such that
\[\mathcal{E}=-\varpi_{n}\lim_{\sigma\to\infty}\sigma\int_{|x|=\sigma}Ric_g(\nu,\nu)\,d\mathscr{H}^{n-1}.
\]
As a result, for $\sigma$ large enough we can write the trivial estimate:
\[ \frac{\mathcal{E}}{2}\leq\varpi_{n}\left|\sigma\int_{|x|=\sigma}Ric_g(\nu,\nu)\,d\mathscr{H}^{n-1}\right|\leq C\sigma^{1-n}\mathscr{H}^{n-1}\left(U\cap \mathbb{S}^{n-1}\left(\sigma\right)\right).
\]
If $g$ were not flat, then by the \textsl{Positive Mass Theorem} (see \cite{SY79, Wit81}) it would have a positive ADM energy, that is $\mathcal{E}>0$. But then, by means of the previous inequality we would have
\[ \Theta(g)\geq \frac{\mathcal{E}}{2C}>0
\]
which completes the proof.
\end{proof}

Roughly speaking, this proposition states that if an asymptotically flat metric $g$ of non-negative scalar curvature is not trivial then the region where it is not (Ricci) flat must contain a cone of positive aperture. Incidentally, we recall here that for asymptotically flat metrics Ricci flatness is in fact equivalent to flatness, as one can prove either by means of the Bishop-Gromov comparison theorem or by using harmonic coordinates, as was done by Schoen-Yau in the proof of the rigidity statement of the \textsl{Positive Mass Theorem}. 
Our gluing scheme provides a sort of converse to the previous statement, by asserting that for any cone we can construct non-trivial data localized inside that cone, and nowhere else. In order to state our theorem, we need to describe our setting with more precision (which we will do in the next subsection).

\subsection{Regularized cones}\label{subs:regcon}

Given an angle $0<\theta<\pi$ and a point $a\in \mathbb R^n$ with $|a|>>1$
we denote by $C_\theta(a)$ the region of $M$ consisting of the compact
part together with the set of points $p$ in the exterior region such that $p-a$ makes an
angle less than $\theta$ with the vector $-a$. Here we are tacitly assuming that the manifold $M$ has only end, which we can do with no loss of generality as our construction is patently local to a given end: such assumption will always be implicit in the sequel of this article.

If we are given two angles $0<\theta_1<\theta_2<\pi$ we consider the region
between the cones. We want to regularize this domain near the vertex $a$,
so we consider the region 
\[ \Omega_1=B_{1/2}(a)\cup (C_{\theta_2}(a)\setminus \overline{C_{\theta_1}(a)}).
\]
We see that the boundary of this region is given by $\partial\Omega_1=S_1\cup S_2$
where $S_1$ and $S_2$ are hypersurfaces with corners on $\partial B_{1/2}(a)$
\[ S_1=(\partial C_{\theta_1}(a)\setminus B_{1/2}(a))\cup (\partial B_{1/2}(a)\cap
C_{\theta_1}(a))
\]
and
\[ S_2=(\partial C_{\theta_2}(a)\setminus B_{1/2}(a))\cup (\partial B_{1/2}(a)\setminus
C_{\theta_2}(a)).
\]
We approximate $\Omega_1$ by a smooth domain $\Omega$ whose boundary consists
of a pair of smooth disjoint hypersursurfaces $\Sigma_1$ and $\Sigma_2$ such that
for $i=1,2$
\[ \Sigma_i\setminus B_1(a)=C_{\theta_i}(a)\setminus B_1(a).
\]
It follows that $M\setminus (\Sigma_1\cup\Sigma_2)$ is a disjoint union of three regions
$\Omega_I$, $\Omega$, and $\Omega_O$ where we refer to $\Omega_I$ as the inner
region, $\Omega$ the transition region, and $\Omega_O$ the outer region.

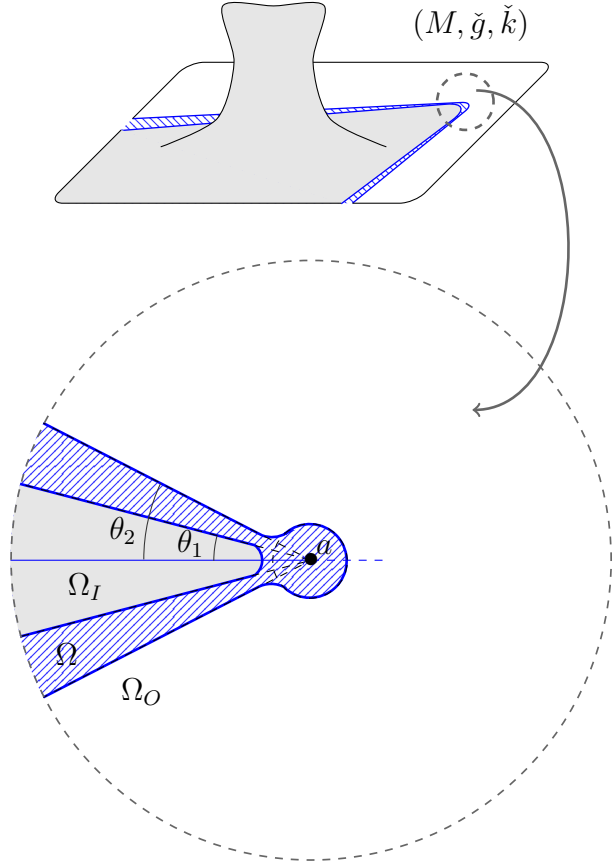
\begin{figure}[ht!]
	\begin{tikzpicture}
	
	\begin{scope}[scale=0.75, shift={(0, 8)}]
	\node at (5.4, 1) (MagSmall) {};
	\fill [pattern color=blue!80, pattern=north west lines, draw=blue, line width=0.5pt, rounded corners=2mm] plot coordinates {(-0.5, 0.5) (5.6, 0.8) (3.4, -1)};
	\fill [pattern color=blue!80, pattern=north west lines, draw=none, rounded corners=2mm] plot coordinates {(3.4, -1) (-2, -1) (-0.5, 0.5)};
	\fill [black!10, draw=blue, line width=0.5pt, rounded corners=4mm] plot coordinates {(-0.7, 0.29) (5.6, 0.8) (3.2, -1.01)}; 
	\fill [black!10, draw=black, rounded corners=2mm] plot coordinates {(3.2, -1) (-2, -1) (-0.7, 0.3)};         
	\draw [black, rounded corners=2mm] plot coordinates {(-0.5, 0.5) (0.5, 1.5) (7, 1.5) (4.5, -1) (3.4, -1)};          
	\fill [black!10, draw=black] plot [smooth] coordinates {(0, 0) (1, 0.5) (1.3, 1.5) (1.1, 2.5) (2, 2.5) (2.9, 2.5) (2.7, 1.5) (3, 0.5) (4, 0)};
	\draw [black!60, line width=1pt, dashed] (5.4, 0.8) circle (0.5);
	\node at (5.5, 2.2) {$(M, \check{g}, \check{k})$};              
	\end{scope}
	
	\begin{scope}[scale=0.5, shift={(4, 1)}]
	
	\clip (0, 0) circle (8);
	\node at (4, 4) (MagLarge) {};
	
	\begin{scope}     
	\filldraw [pattern color=blue!80, pattern=north east lines, draw=blue, line width=1pt] (-8, 4.1) -- (-1.3, 0.66) arc [radius=0.75, start angle=-110, delta angle=50] -- (-0.69, 0.71) arc [radius=0.98, start angle=132, delta angle=-264] -- (-0.69, -0.73) arc [radius=0.75, start angle=60, delta angle=60] -- (-8, -4.1);
	\filldraw [fill=black!10, draw=blue, line width=1pt] (-8.02, 2.105) -- (-1.5, 0.4) arc [radius=0.5, start angle=53, delta angle=-106] -- (-1.5, -0.4) -- (-8.02, -2.105); 
	\end{scope}
	
	\begin{scope}      
	\draw [black, dashed] plot coordinates {(-8, 4.1) (0, 0) (-8, -4.1)};
	\draw [black, dashed] plot coordinates {(-8, 2.1) (0, 0) (-8, -2.1)};
	\draw [black, dashed] (-0.03, -0.02) circle (0.99);                        
	\end{scope}
	
	\begin{scope}
	\draw [blue, ->] plot coordinates {(-8, 0) (-0.1, 0)};
	\draw [blue, dashed] plot coordinates {(0, 0) (2, 0)};
	\node at (0, 0) {$\bullet$};
	\node at (0.35, 0.35) {$a$};
	\end{scope}
	
	\begin{scope}
	\draw [black!80] (-4, 2.05) arc [radius=4.8, start angle=155, delta angle=25];
	\node at (-5, 0.8) {$\theta_2$};
	\draw [black!80] (-2.5, 0.656) arc [radius=2.59, start angle=165, delta angle=15];
	\node at (-3.2, 0.45) {$\theta_1$};
	\end{scope}
	
	\begin{scope}
	\node at (-6, -0.7) {$\Omega_I$};
	\node at (-6.5, -2.5) {$\Omega$};
	\node at (-4.5, -3.5) {$\Omega_O$};
	\end{scope}
	
	\draw [black!60, line width=1pt, dashed] (0, 0) circle (7.99);
	
	\end{scope}
	\path[every node/.style={font=\sffamily\small}, ->, black!60, line width=1pt] (MagSmall) edge [out=0, in=0] (MagLarge);
	
	\end{tikzpicture}

	\caption{Regularized cones and the gluing region $\Omega$.}
	\label{dgm:sector.figure}
\end{figure}

\subsection{Statement of the gluing theorem}

\begin{theorem}\label{thm:main} 
Assume that we are given a set of initial data $(M,\cg,\ck)$ as above together with angles $\theta_{1}, \theta_{2}$ satisfying $0<\theta_{1}<\theta_{2}<\pi$. Furthermore, suppose $\frac{n-2}{2}<p<\check{p}$. Then there exists $a_{\infty}$ so that for any $a\in\mathbb{R}^{n}$ such that $\left|a\right|\geq a_{\infty}$ we can find a metric $\hat{g}$ and a symmetric $(0,2)$-tensor $\hat{k}$ so that $(M,\hat{g},\hat{k})$ satisfies the vacuum Einstein constraint equations, $\hat{g}_{ij}=\delta_{ij}+O^{l-2,\alpha}(|x|^{-p})$ \ $\hat{k}_{ij}=O^{l-2,\alpha}(|x|^{-p-1})$ and 
\[
(\hat{g}, \hat{k})=\begin{cases}
(\cg, \ck) \ \ \textrm{in} \ \ \Omega_{I}(a) \\
(\delta, 0) \ \ \textrm{in} \ \ \Omega_{O}(a).
\end{cases}
\]
\end{theorem}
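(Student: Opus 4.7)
The proof adapts the Corvino-Schoen deformation scheme to the cone geometry of $\Omega$. First, one constructs an approximate solution $(g_0,k_0)$ by taking a smooth cutoff of the data across $\Omega$ so that $(g_0,k_0)=(\cg,\ck)$ on $\Omega_I$ and $(g_0,k_0)=(\delta,0)$ on $\Omega_O$, interpolating between the two in $\Omega$ via a suitable partition of unity. Because the original data solve the vacuum constraints exactly and decay like $|x|^{-\check p}$ at infinity, the constraint error $\Phi(g_0,k_0)$ is supported in $\Omega$ and, in a suitably weighted norm, can be made arbitrarily small by taking $|a|$ large enough; here $\Phi$ denotes the full (scalar + divergence) constraint map.

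Next, one looks for an exact solution of the form $(\hat g,\hat k)=(g_0,k_0)+(h,\pi)$ with $(h,\pi)$ supported in $\overline{\Omega}$, so that the matching with $\Omega_I$ and $\Omega_O$ is automatically preserved. The linearized constraint map $L=D\Phi_{(g_0,k_0)}$ is underdetermined, so following the Corvino-Schoen strategy one introduces the adjoint ansatz $(h,\pi)=L^{\ast}(V)$, where $V=(f,X)$ is a Lagrange multiplier pair consisting of a function and a vector field. The linear problem then becomes the determined fourth-order elliptic system $LL^{\ast}V=-\Phi(g_0,k_0)$, which is attacked variationally by minimizing the functional
\[
\mathcal{G}(V)=\frac{1}{2}\int_{\Omega}\rho\,|L^{\ast}V|^{2}\,d\mathrm{vol}_{g_0}+\int_{\Omega}\langle V,\Phi(g_0,k_0)\rangle\,d\mathrm{vol}_{g_0}
\]
over a doubly weighted Sobolev space. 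Here the weight $\rho$ decays at infinity at a rate compatible with the target expansion governed by $p$, and at the same time degenerates to infinite order as one approaches $\sig_1\cup \sig_2$; this second feature is what forces $L^{\ast}V$, together with all its derivatives, to vanish on $\p\Omega$, so that the resulting correction $(h,\pi)$ is genuinely supported inside $\overline{\Omega}$.

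The central analytical difficulty, and the main obstacle, is the coercivity of $\mathcal{G}$, that is, a Poincaré-type inequality
\[
\|V\|^{2}_{\text{doubly weighted}}\leq C\int_{\Omega}\rho\,|L^{\ast}V|^{2}\,d\mathrm{vol}_{g_0}.
\]
This is delicate because the kernel of $L^{\ast}$ on the Euclidean background consists of \emph{Killing initial data} sets (the infinitesimal symmetries of the constraint system), so one must prevent approximate KIDs from surviving inside the admissible class. The proof proceeds by Hardy-type inequalities finely tuned to the cone geometry: the aperture of $\Omega$ and the shape of the weight $\rho$ interact so that any putative approximate KID would have to grow at infinity faster than the weight permits, forcing a contradiction. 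This is where the specific choice of a cone, as opposed to a slab or cylinder, becomes essential, in line with the obstruction of Proposition \ref{pro:cap}.

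Once coercivity is secured, Lax-Milgram produces a unique minimizer $V$ of $\mathcal{G}$, and standard weighted Schauder estimates upgrade it to a classical solution of the linearized problem, with quantitative control in the relevant weighted Hölder spaces. The nonlinear problem is then closed by a Picard iteration in a small ball of the same weighted space: at each step one solves the linearized equation with right-hand side given by the constraint error of the previous iterate, and the quadratic character of the nonlinearities of $\Phi$, combined with the smallness coming from large $|a|$, makes the scheme a contraction. The fixed point yields $(h,\pi)$ supported in $\overline{\Omega}$, hence data $(\hat g,\hat k)$ with the desired matching behaviour, and the decay estimates $\hat g_{ij}=\delta_{ij}+O^{l-2,\alpha}(|x|^{-p})$, $\hat k_{ij}=O^{l-2,\alpha}(|x|^{-p-1})$ follow from the weighted mapping properties of $LL^{\ast}$ (the two-derivative loss being standard for a second-order system solved in weighted Hölder spaces).
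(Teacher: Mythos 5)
Your proposal follows essentially the same route as the paper: a rough angular-cutoff patch whose constraint error becomes small as $|a|\to\infty$, the Corvino--Schoen weighted-adjoint ansatz solved variationally in doubly weighted spaces, coercivity via Poincar\'e/Hardy-type estimates adapted to the cone, weighted Schauder bounds (in the paper, Douglis--Nirenberg estimates for the coupled mixed-order system for $(u,Z)$), and a Picard iteration closing the nonlinear problem. The only glosses worth noting are that the paper's coercivity is proved by direct integration-by-parts estimates (Basic Estimates I--II, first at the flat metric and then by perturbation for large $|a|$), not by a compactness/contradiction argument excluding approximate KIDs, and that the angular weight is the finite power $\phi^{2N}$ in the finite-regularity version stated here, with infinite-order (exponential) vanishing reserved for the smooth case.
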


Let us now discuss the regularity of the data we produce.

\begin{rem}
From the viewpoint of the regularity of our data $(M,\hat{g}, \hat{k})$ we basically have two versions of Theorem \ref{thm:main}, which will be for brevity referred to as \textsl{finite regularity version} and \textsl{infinite regularity version}.

In the finite regularity version, we start with an (asymptotically flat) initial data set of type $(n,l+1,\alpha,\check{p})$ and get an initial data set of type $(n,l-1,\alpha,p)$. In other terms:
\begin{equation*}
\begin{cases}
\check{g}\in\mathcal{C}^{l,\alpha}_{loc} \\
\check{k}\in\mathcal{C}^{l-1,\alpha}_{loc}
\end{cases}
\  \Longrightarrow \
\begin{cases}
\hat{g}\in\mathcal{C}^{l-2,\alpha}_{loc}\\
\hat{k}\in\mathcal{C}^{l-2,\alpha}_{loc}.
\end{cases}
\end{equation*}
Therefore we face the well-known phenomenon of derivative loss that has been described both in \cite{Cor00} and \cite{CS06}. With more work it is possible to improve the theorem to remove this derivative loss and we will deal with this issue in a forthcoming paper. In the previous statement we shall assume that $l\geq 4$ so that in the most basic case we have
$(\check{g},\check{k})\in\mathcal{C}^{4,\alpha}_{loc}\times\mathcal{C}^{3,\alpha}_{loc}$ and $(\hat{g},\hat{k})\in\mathcal{C}^{2,\alpha}_{loc}\times\mathcal{C}^{2,\alpha}_{loc}$.

In the infinite regularity version, we start with an (asymptotically flat) initial data set where $M,\check{g},\check{k}$ are smooth and produces $(M,\hat{g},\hat{k})$ that are smooth as well. More precisely:
\begin{equation*}
\begin{cases}
\check{g}\in\mathcal{C}^{\infty}_{loc} \\
\check{k}\in\mathcal{C}^{\infty}_{loc}
\end{cases}
\  \Longrightarrow \
\begin{cases}
\hat{g}\in\mathcal{C}^{\infty}_{loc}\\
\hat{k}\in\mathcal{C}^{\infty}_{loc}.
\end{cases}
\end{equation*}
The proof of Theorem \ref{thm:main} is rather lenghty and technical, so we will only discuss it in the most basic case of finite regularity and for $l=4$. The modifications needed to obtain the general finite regularity version are straightforward and just of notational character. On the other hand, some changes are necessary for the infinity regularity version and, first of all, one needs to replace the angular weight of polynomial type with an angular weight of exponential type:
\[ \rho\simeq \phi^{2N} \ \ \leadsto \ \ \rho\simeq e^{-1/\phi}.
\] 
These aspects have been already dealt with in \cite{Cor00} and \cite{CS06} so we will not repeat them here.
\end{rem}

\begin{rem}
As the readers may have noticed, our whole discussion refers to the \textsl{vacuum} constraint equations. However, the method that we develop would work equally well in the case when the Einstein constraint equations have non-zero data on the right-hand side. The issue is that in the latter case, one should first of all extend the data $\check{\mu},\check{J}$ to data $\mu, J$ that equal $\check{\mu}, \check{J}$ in $\Omega_{I}$ and vanish in $\Omega_{O}$ and it is not entirely obvious that such an extension can always be made in a way that the dominant energy condition is satisfied (with respect to the metric $\hat{g}$). 
\end{rem}

\section{Some preliminaries}\label{sec:prelim}

Here and in the sequel we always assume $n\geq 3$ and we let $r\in \mathcal{C}^{\infty}(\R^{n})$ be any positive function which equals the usual Euclidean distance $|\cdot|$ outside of the unit ball (for a given set of coordinates). 

\subsection{Doubly weighted functional spaces}\label{subs:doubly}

Let $a\in\mathbb R^n$ with $|a|>>1$, and introduce coordinates $\left\{x\right\}$ centered at $a$. For angles
$0<\theta_1<\theta_2<\pi$ we consider the region $\Omega$ constructed in
the previous section (notice that the cones have vertex at $x=0$). 
We let $g$ be a Riemannian metric gotten by making a \textsl{rough patch} of $\cg$ and $\delta$ in the region $\Omega$: therefore $g$ agrees with $\cg$ in the connected component of the complement of $\Omega$ which contains the compact core of the manifold $M$ (which we called $\Omega_{I}$), and agrees with $\delta$ in the other connected component (which we called $\Omega_{O}$). We perform the same construction for $k$, which interpolates between $\ck$ and the trivial symmetric tensor. Of course, here we are making use of an \textsl{angular} cutoff function $\chi$ (namely a function only depending on the angle between a given point and the vector $-a$) with rapid decay at $\Sigma_{2}$ and such that $1-\chi$ rapidly decays at $\Sigma_{1}$. Finally, we observe that such $(g,k)$ satisfy the very same decay properties as $(\cg,\ck)$ even though they will not in general be a solution for the Einstein constraint equations. The main purpose of this work is to present a \textsl{deformation scheme} which allows to obtain such a goal. \newline
For $q>0$ we introduce the weighted $\mathcal{L}^2$ Sobolev space
$\mathcal{H}_{k,-q}(\Omega)$ of functions (or tensors) defined as the completion of the smooth
functions with bounded support (no condition on $\partial\Omega$) with 
respect to the norm $\|\cdot\|_{\mathcal{H}_{k,-q}}$ given by
\[ \|f\|^2_{\mathcal{H}_{k,-q}}=\sum_{i=0}^k\sum_{|\beta|=i}\int_\Omega|\partial^\beta f|^2r(x)^{-n+2(i+q)}
\ d\mathscr{L}^{n}(x)
\]
where we recall that $r(x)$ is a smooth positive function with $r(x)=|x|$ outside of the unit ball and $\mathscr{L}^{n}$ is the Lebesgue measure on $\mathbb{R}^n$. The reader shall notice that due to the decay properties of the Riemannian metrics we consider in this article, such Sobolev spaces could have been equivalently defined by means of covariant derivatives with respect to $g$ (denoted, in the sequel, by $D_g$) and by the $n$-dimensional Hausdorff measure associated to $g$ (denoted, in the sequel, by $\upsilon$). The space $\mathcal{H}_{k,-q}(\Omega)$
consists of those $f$ which roughly decay like $|x|^{-q}$ along the cone and for which a derivative of order 
$i\leq k$ decays like $|x|^{-q-i}$. Whenever no ambiguity is likely to arise, we will simply adopt the symbol $\left\|\cdot\right\|_{k,-q}$ rather than $\|\cdot\|_{\mathcal{H}_{k,-q}}$, the latter being kept only for situations when different classes of functional spaces are used at the same time.

For $0<q<n$ we use the $\mathcal{L}^2$ pairing to identify the dual space $\mathcal{H}^*_{0,-q}$ with
$\mathcal{H}_{0,q-n}$ since
\[ |\int_\Omega f_1f_2\ d\mathscr{L}^n|=|\int_\Omega (f_1r^{(-n/2+q)})(f_2r^{(n/2-q)})\ d\mathscr{L}^n|\leq \|f_1\|_{0,-q}\|f_2\|_{0,q-n}.
\]

Throughout this article, we need to work in \textsl{doubly} weighted Sobolev spaces, namely we also need to introduce an angular weight which is a certain (large) power of the angular distance of a point from $\partial\Omega$. This is necessary in order to prove that the gluing we perform is smooth (more generally: regular enough) up to the boundary of the gluing domain.

More precisely we take $\rho=\phi^{2N}$ where $N$ will be chosen to be a large integer and 
$\phi$ is a positive weight function which is equal to $\theta-\theta_1$ near 
$\Sigma_1\setminus B_1(0)$ and equal to $\theta_2-\theta$ near $\Sigma_2\setminus B_1(0)$.
Near $\Omega\cap B_1(0)$ we assume that $\phi$ vanishes with nonzero gradient along
$\partial \Omega$. We let $\phi_0$ denote the maximum value of $\phi$ and we assume that
each set $\Omega_t=\{\phi\geq t\}$ for $0\leq t\leq \phi_0$ is (the closure of) a smooth domain which is a cone outside $B_1(0)$. We define the doubly weighted space $\mathcal{H}_{k,-q,\rho}(\Omega)$ using the norm 
\[  \|f\|^2_{\mathcal{H}_{k,-q,\rho}}=\sum_{i=0}^k\sum_{|\beta|=i}\int_\Omega|\partial^\beta f|^2r(x)^{-n+2(i+q)}
\rho(x)\ d\mathscr{L}^n(x).
\]
As above, we will often use the simpler notation $\left\|\cdot\right\|_{k,-q,\rho}$ in lieu of $\|\cdot\|_{\mathcal{H}_{k,-q,\rho}}$.

Given $a\in \mathbb{R}^{n}$ we let $s(x)=\left|x+a\right|$ and we remark that for any assigned angle $\theta_1$ (and any $\theta_2\in(\theta_1,\pi)$) there exists $a_{\ast}=a_{\ast}(\theta_1)$ such that whenever $|a|>a_{\ast}$
\begin{equation}\label{eq:min}
\min_{x\in\Omega}s(x)\geq 
\begin{cases}
|a|\sin\theta_1 & \ \textrm{if} \ \  \theta_1\in(0,\frac{\pi}{2}) \\  
\frac{|a|}{2} & \ \textrm{if} \ \ \theta_1\in\left[\frac{\pi}{2},\pi\right)
\end{cases}
\end{equation}
and there exists a constant $C>0$ (not depending on any of the parameters $n, a$ but only on  $\theta_{1}, \theta_{2}$ and on the way the function $r$ is defined, namely on the value of its positive lower bound) such that 
\begin{equation}\label{eq:equiv}
C^{-1}\leq \frac{s(x)}{r(x)}\leq C\left|a\right|, \    \textrm{for all} \ x\in\Omega.
\end{equation}
In the sequel of this article, we will always tacitly assume to work with large enough $|a|$ (that is to say $|a|>a_{\ast}$) namely in a regime where thiose basic estimates are valid.

In this article, we let $C$ denote any constant depending only on $g,k,n,p,N,\theta_{1},\theta_{2}$ (or a subset thereof) while we indicate by $C^{s}_{(-q)}$ (resp. $C^{r}_{(-q)}$) any function or tensor which is bounded (in $\Omega$) by a constant as above times $s^{-q}(x)$ (resp. $r^{-q}(x)$). In Subsection \ref{subs:coercLie} it will be important to distinguish between those constants that depend on $N$ and those that do not and thus we will specify the functional dependence of the constants in all of the statements. 
 
We will need pointwise bounds on solutions to solve the nonlinear problem. For a point
$x\in\Omega$ we let $d(x)$ denote the distance from $x$ to $\partial\Omega$ and we
observe the bounds
\begin{equation}\label{eq:equiv2} C^{-1}r(x)\phi(x)\leq d(x)\leq Cr(x)\phi(x)
\end{equation}
for some positive constant $C$. 

We will use interior Schauder estimates to obtain the pointwise bounds. To do this we define 
global weighted H\"older norms. For real numbers $k,l,m$ 
with $k\geq 0$ an integer, we define $\|\cdot\|_{k,\alpha}^{(l,m)}$ by
\[ \|f\|_{k,\alpha}^{(l,m)}=\sum_{i=0}^k\sum_{|\beta|=i}\sup_{x\in\Omega}r(x)^{-l} \phi(x)^{-m}d(x)^i|\partial^\beta f(x)|
+\sum_{|\beta|=k}\sup_{x\in\Omega}r(x)^{-l} \phi(x)^{-m} d(x)^{k+\alpha}[\partial^\beta f]_{\alpha,B_{d(x)/2}(x)}
\]
where $[\cdot]_{\alpha,U}$ denotes the H\"older coefficient on $U$. The completion of the space of $\mathcal{C}^{\infty}_{c}(\mathbb{R}^{n})$ functions with respect to this norm will be denoted by $\mathcal{C}^{k,\alpha}_{l,m}(\Omega)$. Moreover, we consider the average value $\bar{f}(x)$ of $|f(x)|$ taken over the ball of radius $d(x)/2$.
 
\subsection{Einstein constraint operators}

Given data $(M,g,k)$, it is here convenient to recall the definition of the momentum tensor 
\[\pi^{ij}=k^{ij}-Tr_{g}\left(k\right)g^{ij}
\]
so that the vacuum Einstein constraint equations take the compact form
\[ \Phi(g,\pi)=0, 
\]
where
\[\Phi(g,\pi)=(\mathscr{H}(g,\pi), Div_{g}\pi), \ \mathscr{H}(g,\pi)=R_g+\frac{1}{n-1}\left(Tr_{g}\pi\right)^{2}-\left|\pi\right|^{2}_{g}.
\]
In order to avoid ambiguities, we remark that the second component of such system is a vector field. It is easily checked that for any $p\in \left(0, n-2\right)$ the linear map 
\[d\Phi_{(g,\pi)}: \mathcal{M}_{2, -p}\times \mathcal{S}_{1,-p-1}\to \mathcal{H}_{0, -p-2}\times \mathcal{X}_{0,-p-2}
\]
is continuous. Here and below we are using the symbols $\mathcal{M},\mathcal{S}, \mathcal{X}$ to stress that we are referring to a certain class of tensors, and specifically:
\begin{itemize}
\item{$\mathcal{M}_{2,-p}$ denotes the space of symmetric $(0,2)$ tensors in $\mathcal{H}_{2,-p}(\Omega)$;}
\item{$\mathcal{S}_{1,-p-1}$ denotes the space of symmetric $(2,0)$ tensors $\mathcal{H}_{1,-p-1}(\Omega)$;}
\item{$\mathcal{X}_{0,-p-2}$ denotes the space of vector fields in $\mathcal{H}_{0,-p-2}(\Omega)$}
\end{itemize}
with obvious extensions for different decay exponents or weights at the boundary of the gluing interface.

 Correspondingly, for the adjoint map, which is defined by means of the equation
\[\int_{\Omega}\left[d\Phi_{(g,\pi)}[h,\omega]\cdot_{g}(u,Z)\right]\,d\upsilon=\int_{\Omega}\left[(h,\omega)\cdot_{g}d\Phi^{\ast}_{(g,\pi)}[u,Z]\right]\,d\upsilon
\]
one has that $d\Phi^{\ast}\in \mathcal{C}^{0}\left(\mathcal{H}_{2, -n+p+2}\times \mathcal{X}_{1,-n+p+2}\to \mathcal{M}_{0, -n+p}\times \mathcal{S}_{0,-n+p+1}\right)$ by virtue of the $\mathcal{L}^{2}$-duality mentioned above.


Rather similar mapping properties are true in doubly weighted functional spaces, and in particular we shall make use of the fact that 
\[
d\Phi^{\ast}\in \mathcal{C}^{0}\left(\mathcal{H}_{2, -n+p+2,\rho}\times \mathcal{X}_{1,-n+p+2,\rho}\to \mathcal{M}_{0, -n+p,\rho}\times \mathcal{S}_{0,-n+p+1,\rho}\right).
\] Incidentally, such a statement can be effectively deduced by means of Lemma \ref{lem:coarea}, which allows to transform functional inequalities in singly weighted functional spaces into corresponding inequalities in doubly weighted ones.  

The expressions of $d\Phi_{(g,\pi)}[h,\omega]$ and $d\Phi^{\ast}_{(g,\pi)}[u,Z]$ have been computed (for instance) in \cite{FM73} and \cite{FM75} and for the purpose of this work we will recall them in symbolic form
\begin{equation*}
\begin{cases} d\Phi^{(1)}_{(g,\pi)}[h,\omega]&=-\Delta_g(Tr_g(h))+Div_g(Div_g(h))-g(h,Ric_g)+\pi\ast\pi\ast h+\pi\ast\omega\\
 d\Phi^{(2)}_{(g,\pi)}[h,\omega]&=Div_g(\omega)+\pi\ast D_gh
\end{cases}
\end{equation*}
and
\begin{equation*}
\begin{cases} {d\Phi^{\ast}}^{(1)}_{(g,\pi)}[u,Z]&=-\left(\Delta_g u\right)g+Hess_g(u)-uRic_g+\pi\ast\pi\ast u+D_g\left(\pi\ast Z\right)\\
{d\Phi^{\ast}}^{(2)}_{(g,\pi)}[u,Z]&=-\frac{1}{2}\mathscr{L}_{Z}g+\pi\ast u
\end{cases}
\end{equation*}
where $D_{g}$ denotes the Levi-Civita connection associated to $g$, all differential operators are considered with respect to the background metric $g$ and $\mathscr{L}$ stands for the Lie derivative. Let us remark that $L_g^{\ast}u=-\left(\Delta_g u\right)g+Hess_g(u)-uRic_g$ is the adjoint of the linearised scalar curvature operator.  We recall here that given tensors $A$ and $B$ the symbol $A\ast B$ denotes any finite linear combination (over $\R$) of contractions, via the background metric, of the product $A\otimes B$. Of course the type of the resulting tensor is clear from the context and when we want to stress it we will introduce indices, as in $\left(A\ast B\right)_{ij}^{k}$. 

\subsection{Variational framework}\label{subs:varframe}

Extending the approach of \cite{CS06}, it is convenient to introduce the functional $\mathcal{G}:\mathcal{H}_{2,-n+p+2,\rho}\times \mathcal{X}_{1,-n+p+2,\rho} \to \mathbb{R}$ defined by
\[\mathcal{G}(u,Z)=\int_{\Omega}\left\{\frac{1}{2}\left[\left|{d\Phi^{\ast}}^{(1)}_{\left(g,\pi\right)}[u,Z]\right|^{2}r^{n-2p}\rho+\left|{d \Phi^{\ast}}^{(2)}_{\left(g,\pi\right)}[u,Z]\right|^{2}r^{n-2p-2}\rho\right]-(f,V)\cdot_{g}(u,Z)\right\}\,d\upsilon
\]
where we are taking
\[ f\in \mathcal{H}_{0, -p-2,\rho^{-1}}, \ \ V\in \mathcal{X}_{0,-p-2,\rho^{-1}}.
\]
If we let 
\[ h=r^{n-2p}\rho \ ({d\Phi^{\ast}}^{(1)}_{(g,\pi)}[u,Z]) \ \ \ \omega=r^{n-2p-2}\rho \ ({d\Phi^{\ast}}^{(2)}_{(g,\pi)}[u,Z])
\]
then the Euler-Lagrange equation for the functional $\mathcal{G}$ takes the form
\[ d\Phi_{(g,\pi)}[h,\omega]-(f,V)=0.
\]
Moreover, we see at once that $h$ and $\omega$ decay according to what we claimed in the statement of Theorem \ref{thm:main} and namely
\[ \left|h\right|_{g}\lesssim \left|x\right|^{-p}, \ \ \left|\omega\right|_{g}\lesssim \left|x\right|^{-p-1}.
\]
As a result, Theorem \ref{thm:main} for $\check{p}=n-2$ is saying that we can get arbitrarily close to the decay of harmonic data for any dimension $n\geq 3$.

\subsection{Outline of the proof}

The general strategy we are about to follow is to obtain the data $(M,\hat{g},\hat{k})$ from the rough patch $(M,g,k)$ by solving the Einstein constraint equations iteratively, and more specifically by means of a Picard scheme. To that aim, we will need to solve a sequence of linearized problems of the form $d\Phi_{(g,\pi)}[h_{i},\omega_{i}]=(f_{i},V_{i})$ and show that indeed the tensors $g+h_{i}$ and $\pi+\omega_{i}$ converge, in suitable functional spaces, to a solution of the nonlinear constraint system.
 While this conceptual scheme is similar to that presented in \cite{Cor00} and \cite{CS06} (see also \cite{CD03}), we need to face here a number of serious technical obstacles.
In order to solve the linear problems we will show that the functional $\mathcal{G}$ is coercive, that fact following from certain subtle Poincar\'e type estimates (which we will call \textsl{Basic Estimates}, see Proposition \ref{pro:basic1} and Proposition \ref{pro:basicIIup}) of independent interest. Moreover, as mentioned before, the whole construction is performed in doubly weighted functional spaces, since we will have to keep control, at the same time, of both the decay at infinity and of the regularity at the boundary of the gluing region $\partial\Omega$.

\section{Solving the linear problem}\label{sec:lin}

The scope of this section is to show how to solve the linearized constraint equations by proving the existence of critical points of the functional $\mathcal{G}$. We will initially assume to work at the trivial data, and specifically at the flat metric $g=\delta$ and then  get the general case by a perturbation argument. Moreover, we claim that it is enough to prove coercivity estimates in singly weighted Sobolev spaces, and namely with only radial but no angular weights. This follows from the following coarea type lemma.

\begin{lem}\label{lem:coarea}
Let $\zeta\in \mathcal{C}^{\infty}_{c}(\mathbb{R}^{n},\mathbb{R})$, let $\rho:\Omega\to\mathbb{R}$ be defined in terms of $\phi$ as in Subsection \ref{subs:doubly} and let $\tilde{\rho}:(0,\phi_0)\to\mathbb{R}$ be the smooth, monotone increasing function characterized by the identity $\tilde{\rho}(\phi(x))=\rho(x)$ for all $x\in\Omega$.  Then
\[ \int_{\Omega}\zeta \rho\ d\upsilon=\int_0^{\phi_0}\tilde{\rho}'(t)\int_{\Omega_t}\zeta\ d\upsilon\, d\mathscr{L}^{1}
\]
where $\Omega_{t}=\left\{x\in\Omega \ : \ \phi(x)\geq t\right\}$.
\end{lem}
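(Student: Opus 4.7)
The plan is straightforward: I would rewrite $\rho(x)$ itself as a one-dimensional integral in $t$ and then swap the order of integration via Fubini-Tonelli. Since $\rho = \phi^{2N}$ by the choice made in Subsection \ref{subs:doubly}, the function $\tilde\rho$ is simply $\tilde\rho(t) = t^{2N}$; in particular it extends smoothly to $[0,\phi_0]$ with $\tilde\rho(0) = 0$, so the fundamental theorem of calculus gives
$$\rho(x) \;=\; \tilde\rho(\phi(x)) \;=\; \int_0^{\phi(x)} \tilde\rho'(t)\, d\mathscr{L}^1(t).$$

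The key step is then to rewrite this as an integral over the fixed interval $(0,\phi_0)$ by inserting the indicator of $\{t \leq \phi(x)\}$ and recognising that, for fixed $t$, this indicator is precisely $\mathbf{1}_{\Omega_t}(x)$, by the very definition $\Omega_t = \{\phi \geq t\}$. Thus
$$\rho(x) \;=\; \int_0^{\phi_0} \tilde\rho'(t)\, \mathbf{1}_{\Omega_t}(x)\, d\mathscr{L}^1(t).$$
Multiplying both sides by $\zeta(x)$, integrating over $\Omega$ against $d\upsilon$, and applying Fubini-Tonelli on the product space $\Omega \times (0,\phi_0)$ immediately yields the claimed identity.

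The only points to verify are the hypotheses for Fubini-Tonelli, all of which hold trivially: $\zeta$ has compact support, $\tilde\rho'$ is continuous (hence bounded) on $[0,\phi_0]$, the indicator $\mathbf{1}_{\Omega_t}(x)$ is jointly measurable in $(x,t)$, and after decomposing $\zeta$ into positive and negative parts the integrand is nonnegative so one can apply Tonelli directly. I do not anticipate any genuine obstacle here; this is essentially a layer-cake representation of the weight $\rho$, and the role of the lemma is purely instrumental, namely to allow singly weighted estimates (which one proves on each slice $\Omega_t$ for the unweighted $\zeta$) to be promoted to doubly weighted ones by integrating against the measure $\tilde\rho'(t)\, d\mathscr{L}^1(t)$ on $(0,\phi_0)$.
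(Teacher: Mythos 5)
Your proof is correct. What you do differs in mechanism from the paper's argument: the paper writes $\int_\Omega\zeta\rho\,d\upsilon$ via the fundamental theorem of calculus for the function $t\mapsto\int_{\Omega_t}\zeta\rho\,d\upsilon$, then uses the coarea formula to pull $\tilde\rho(t)$ out of the $t$-derivative of the slice integral (since $\rho\equiv\tilde\rho(t)$ on $\{\phi=t\}$), and finally integrates by parts in $t$, with the boundary contribution at $t=\phi_0$ cancelling against the plateau term $\int_{\Omega_{\phi_0}}\zeta\tilde\rho(\phi_0)\,d\upsilon$. You instead use the layer-cake identity $\rho(x)=\tilde\rho(\phi(x))=\int_0^{\phi_0}\tilde\rho'(t)\mathbf{1}_{\Omega_t}(x)\,d\mathscr{L}^1(t)$ and swap the order of integration by Fubini--Tonelli; the integration by parts is thus absorbed into Fubini. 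Your route is somewhat more elementary and robust: it needs no coarea formula, no differentiability in $t$ of the slice integrals $\int_{\Omega_t}\zeta\,d\upsilon$, and hence no nondegeneracy of $\nabla\phi$ along the level sets, only measurability and the boundedness of $\tilde\rho'$. The one ingredient you use explicitly is $\tilde\rho(0)=0$, i.e.\ that the angular weight vanishes at $\partial\Omega$; this is automatic here since $\rho=\phi^{2N}$ (and is also implicitly used in the paper when the boundary term at $t=0$ is discarded in the integration by parts), but it is worth flagging as the hypothesis that makes the layer-cake representation exact with no additive constant. Both arguments yield the same formula, and your closing remark about how the lemma is used — promoting singly weighted slice estimates to doubly weighted ones by integrating against $\tilde\rho'(t)\,d\mathscr{L}^1(t)$ — matches the paper's intent.
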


\begin{proof} We use the following level set formula for any smooth function $\zeta$
on $\bar{\Omega}$ with bounded support
\[ \int_{\Omega}\zeta \rho\ d\upsilon=-\int_0^{\phi_0}\frac{d}{dt}\int_{\Omega_t}\zeta\rho\ d\upsilon\,d\mathscr{L}^{1}+
\int_{\Omega_{\phi_0}}\zeta\tilde{\rho}(\phi_0)\ d\upsilon. 
\]
Now we have, because of the standard coarea formula and integration by parts
\[ -\int_0^{\phi_0}\frac{d}{dt}\int_{\Omega_t}\zeta\rho\ d\upsilon\ d\mathscr{L}^1=-\int_0^{\phi_0}\tilde{\rho}(t)
\frac{d}{dt}\int_{\Omega_t}\zeta\ d\upsilon\ d\mathscr{L}^1=-\tilde{\rho}(\phi_0)\int_{\Omega_{\phi_0}}\zeta\ d\upsilon
+\int_0^{\phi_0}\tilde{\rho}'(t)\int_{\Omega_t}\zeta\ d\upsilon\ d\mathscr{L}^1.
\]
Combining these we have shown
\[ \int_{\Omega}\zeta \rho\ d\upsilon=\int_0^{\phi_0}\tilde{\rho}'(t)\int_{\Omega_t}\zeta\ d\upsilon\ d\mathscr{L}^1
\] 
as we had claimed.
\end{proof}

Now, let us suppose we have proved that a certain bounded operator $T:\mathcal{H}_{k_{1},-r_{1}}\to\mathcal{H}_{k_{2},-r_{2}}$ satisfies a functional inequality of the form
\[ \left\|f\right\|_{k_{1},-r_{1}}\leq C\left\|Tf\right\|_{k_{2},-r_{2}}
\]
for some uniform constant $C>0$. Then, because of the previous lemma we can obtain that in fact
\[ \left\|f\right\|_{k_{1},-r_{1},\rho}\leq C \left\|Tf\right\|_{k_{2},-r_{2},\rho}
\]
by taking the previous singly weighted estimate for each $\Omega_{t}$ and integrating in $t$, thanks to the positivity of $\tilde{\rho}'$.
Of course, such simple argument is true when the domain and target of the operator in question, say $T$, are spaces of tensors of any type and not necessarily scalar functions as we considered here in order to simplify the discussion.

\subsection{Poincar\'e-type estimates in conical domains}

We first consider the Hamiltonian constraint and thus show that at the Euclidean metric (with the notations defined above) $\left\|u\right\|_{2,-n+p+2}\leq C\left\|L^{\ast}u\right\|_{0,-n+p}$. That essentially follows from the second of the following Poincar\'e inequalities, of independent interest. 

\begin{lem}\label{lem:intparts} For any real number $q$ with $0<q<(n-2)/2$, and $q\neq (n-4)/2$
for $n\geq 5$  we have
\[ \left\|u\right\|_{0,-q}\leq C \left\|\partial u\right\|_{0,-q-1}
\]
as well as
\[ \|u\|_{2,-q}\leq C\|Hess(u)\|_{0,-q-2}.
\]
\end{lem}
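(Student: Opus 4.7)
The plan is to exploit the key geometric feature of $\Omega$: each conical portion $\Sigma_i\setminus B_1(0)$ of $\partial\Omega$ has outward unit normal orthogonal to the radial direction (since the vertex of the cones is at $x=0$), so $\langle x,\nu\rangle=0$ there. Consequently the radial vector field $X=r^{2q-n}x$ produces no boundary flux across the cones, and the only source of boundary contributions is the bounded rounded-vertex region.

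For the first inequality I would start from the pointwise identity $\operatorname{div}(r^{2q-n}x)=2q\,r^{2q-n}$. Multiplying by $u^{2}$ and applying the divergence theorem on $\Omega$ gives
\[
2q\int_{\Omega}u^{2}r^{2q-n}\,d\mathscr{L}^{n}=\int_{\partial\Omega}u^{2}r^{2q-n}\langle x,\nu\rangle\,d\mathscr{H}^{n-1}-2\int_{\Omega}u\,\langle\nabla u,x\rangle\,r^{2q-n}\,d\mathscr{L}^{n}.
\]
The boundary integral is supported only on the bounded rounded-vertex piece since $\langle x,\nu\rangle=0$ along the cones. A radial cut-off decomposition $u=\eta u+(1-\eta)u$, with $\eta\equiv 0$ on $\{r\leq 2\}$ and $\eta\equiv 1$ on $\{r\geq 3\}$, handles the two regimes separately: for $\eta u$ the surface term vanishes identically and Cauchy--Schwarz together with $q>0$ produces the estimate; for $(1-\eta)u$, which is supported in the bounded Lipschitz region $\Omega\cap B_{3}(0)$ and vanishes on the spherical cap $\Omega\cap\partial B_{3}(0)$ of positive $\mathscr{H}^{n-1}$-measure, an unweighted Poincar\'e inequality applies directly. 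The commutator terms $|u|\,|\nabla\eta|$ supported in the annulus $\{2\leq r\leq 3\}$, where the weights are comparable to constants, are absorbed back into the bounds for $\eta u$ and $(1-\eta)u$.

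For the Hessian inequality, when $0<q<(n-4)/2$ the proof is a two-step iteration: applying the gradient inequality componentwise to $\partial_{i}u$ at the shifted parameter $q+1\in(0,(n-2)/2)$ yields $\|\partial u\|_{0,-q-1}\leq C\|\partial^{2}u\|_{0,-q-2}$, and composing with the gradient inequality at parameter $q$ gives $\|u\|_{0,-q}\leq C^{2}\|\mathrm{Hess}(u)\|_{0,-q-2}$. The exclusion $q\neq(n-4)/2$ for $n\geq 5$ corresponds to the borderline $q+1=(n-2)/2$ where this iteration degenerates. For the complementary range $(n-4)/2<q<(n-2)/2$ I would rerun the divergence argument with the higher-weight vector field $X=r^{2q-n+2}x$ tested against $|\nabla u|^{2}$; one integration by parts then transfers a derivative onto the pairing $u\cdot\mathrm{Hess}(u)$, and the coefficient emerging from $\operatorname{div}(r^{2q-n+2}x)=(2q+2)r^{2q-n+2}$ is nonzero precisely in this range, so a Cauchy--Schwarz absorption yields the Hessian bound.

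The main technical obstacle I expect is the careful bookkeeping in the rounded-vertex region: one must verify that the Poincar\'e constant on $\Omega\cap B_{3}(0)$ (with vanishing trace only on the spherical cap $\Omega\cap\partial B_{3}(0)$) is independent of the basepoint $a$ in the construction, which is the case because the geometry of $\Omega$ in that region depends only on $\theta_{1},\theta_{2}$ and the chosen regularisation. Combined with the fact that the weights $r^{2q-n}$ and $r^{2q-n+2}$ are uniformly comparable to positive constants on the transition annulus, this allows the commutator terms from the cut-off to be absorbed in standard fashion.
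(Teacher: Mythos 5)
Your overall strategy (a radial vector field with zero flux through the conical part of $\partial\Omega$, Cauchy--Schwarz, plus a Poincar\'e inequality near the rounded vertex) is the same as the paper's, but the way you close the estimate near the vertex has a genuine gap. After splitting $u=\eta u+(1-\eta)u$, both of your estimates produce the commutator term $\|u\,\partial\eta\|$, which is comparable to the $L^2$ norm of $u$ on the transition annulus $\Omega\cap\{2\le r\le 3\}$; and that quantity is in turn bounded only by $\|\eta u\|_{0,-q}+\|(1-\eta)u\|_{L^2}$, i.e.\ by the very left-hand sides you are estimating, with a constant of order one. There is no smallness anywhere (widening the transition region does not help, since $|\partial\eta|\sim r^{-1}$ exactly compensates the extra factor $r$ in the weight), so the claimed ``absorption'' is circular: an inequality of the form $A+B\le C\|\partial u\|+C'(A+B)$ with $C'$ not less than $1$ gives nothing; nor can an unweighted Poincar\'e inequality control $u$ on the annulus, since $u$ itself has no vanishing trace there. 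The paper avoids this by \emph{not} cutting off in the exterior: it applies the divergence identity to $u$ itself on $\Omega\setminus B_1(0)$ and observes that the resulting boundary term on $\partial B_1(0)\cap\Omega$ appears with the favourable sign $(n-2-2q)\int u^2|x|^{1-n+2q}\,d\mathscr{H}^{n-1}\ge 0$, so it can simply be discarded; the exterior estimate so obtained controls $u$ on the annulus, and only afterwards is the cutoff-plus-Poincar\'e argument used on $\Omega\cap B_1(0)$, its commutator error being controlled by the already established exterior bound. Your argument needs this ordering (or an equivalent device, e.g.\ a ray-wise Hardy argument out to where $u$ vanishes) to close.

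For the Hessian inequality your outline is sound and in one respect cleaner than the paper's: iterating the first inequality on $w=\partial_j u$ at parameter $q+1$ is exactly what the paper does (via $v=|x|^{4-n+2q}$), and your non-gradient field $r^{2q-n+2}x$ tested against $|\nabla u|^2$ would in fact work on the whole range $0<q<(n-2)/2$, since $\mathrm{div}(|x|^{2q-n+2}x)=(2q+2)|x|^{2q-n+2}$ never vanishes there (your claim that the coefficient is nonzero ``precisely'' for $(n-4)/2<q<(n-2)/2$ is inaccurate, though harmlessly so) and the inner spherical boundary term again has a good sign; the exceptional value $q=(n-4)/2$ in the paper is an artifact of insisting that the vector field be the gradient of a power of $|x|$. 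However, this part inherits the same unresolved treatment of the rounded-vertex region as the first inequality, so as written the proposal is not yet a complete proof.
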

\begin{proof} We recall that we prove the inequality for the Euclidean metric, denoting by
$Hess(u)$ the Euclidean hessian. We consider the function $v=|x|^{2-n+2q}$
and observe that for $|x|\neq 0$ we have
\[ \Delta v=2q(2-n+2q)|x|^{-n+2q}
\]
where $\Delta$ is the Euclidean Laplace operator. Under our assumptions this
implies that $\Delta v=- \delta |x|^{-n+2q}$ for a positive constant $\delta=2q(n-2-2q)$. Assume that
$u$ has bounded support and integrate by parts
\[ \int_{\Omega\setminus B_1(0)}u^2\Delta v\ d\mathscr{L}^{n}=(n-2-2q)\int_{\partial B_1(0)\cap\Omega}u^2|x|^{1-n+2q}\ d\mathscr{H}^{n-1}-2\int_{\Omega\setminus B_1(0)} u\partial u \cdot \partial v\ d\mathscr{L}^{n}
\]
where we have used that fact that $\Omega$ is a cone outside of $B_1(0)$, so that the other
boundary terms there vanish. It follows from the sign of the boundary term and the Schwarz inequality that
\[ \delta \|u\|_{0,-q,\Omega\setminus B_1(0)}^2=\delta\int_{\Omega\setminus B_1(0)}u^2|x|^{-n+2q}\leq 2\|u\|_{0,-q,\Omega\setminus B_1(0)} \|\partial u\|_{0,-q-1,\Omega\setminus B_1(0)},
\]
and so we have
\[ \int_{\Omega\setminus B_1(0)}u^2r^{-n+2q}\ d\mathscr{L}^{n}\leq C\int_{\Omega\setminus B_1(0)}|\partial u|^2
r^{-n+2q+2}\ d\mathscr{L}^{n}.
\]

Let $\zeta$ be a smooth cutoff function with support in $B_2(0)$ and with $\zeta=1$ on $B_1(0)$.
A standard Poincar\'e inequality implies
\[ \int_{\Omega}(\zeta u)^2\ d\mathscr{L}^{n}\leq C\int_{\Omega}|\partial(\zeta u)|^2\ d\mathscr{L}^{n}. 
\]
In turn, this gives
\[ \int_{\Omega\cap B_1(0)}u^2r^{-n+2q}\ d\mathscr{L}^{n}\leq C\int_{\Omega\cap B_2(0)}|\partial u|^2
r^{-n+2q+2}\ d\mathscr{L}^{n}
\]
since $r$ is bounded above and below by positive constants on $B_2(0)$. Summing this
with the previous inequality we obtain
\[ \int_{\Omega}u^2r^{-n+2q}\ d\mathscr{L}^{n}\leq C\int_{\Omega}|\partial u|^2 r^{-n+2q+2}\ d\mathscr{L}^{n}.
\]

To obtain the desired bound we now apply the previous argument to each partial derivative
of $u$, say $w=\partial_j u$. We may use essentially the same argument with the
function $v=|x|^{4-n+2q}$ provided that $q\neq (n-4)/2$. The two cases when
$4-n+2q<0$ and when $4-n+2q>0$ work similarly with a sign reversal. In both cases
the boundary term can be thrown away and we end up showing
\[ \int_\Omega |\partial u|^2r^{-n+2q+2}\ d\mathscr{L}^{n}\leq C\int_\Omega |Hess(u)|^2r^{-n+2q+4}\ d\mathscr{L}^{n}
\]
which can be combined with our first functional inequality to complete the proof.
\end{proof}

From this we can deduce the coercivity estimate for the adjoint of the linearized scalar curvature operator, which corresponds to the coercivity of ${d\Phi^{\ast}}^{(1)}$ in the time-symmetric case, namely when $\pi=0$.

\begin{proposition} \label{pro:basic1}(Basic Estimate I) For any real number $p$ with $\frac{n-2}{2}<p<n-2$ and $p\neq n/2$ for $n\geq 5$ we have
\[ \|u\|_{2,-n+p+2}\leq C\|L^*(u)\|_{0,-n+p} \ \textrm{for all} \ u\in \mathcal{H}_{2,-n+p+2}(\Omega).
\]
 Hence, thanks to Lemma \ref{lem:coarea} we deduce that
\[ \|u\|_{2,-n+p+2,\rho}\leq C\|L^*(u)\|_{0,-n+p,\rho} \ \textrm{for all} \ u\in \mathcal{H}_{2,-n+p+2,\rho}(\Omega).
\]

\end{proposition}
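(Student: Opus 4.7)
Since we work at the Euclidean background $g=\delta$ with $\pi=0$ (hence $\mathrm{Ric}_\delta=0$), the operator of interest reduces to $L^* u = -(\Delta u)\delta + \mathrm{Hess}(u)$. The plan is to recover $\mathrm{Hess}(u)$ algebraically from $L^* u$ pointwise, then apply the Hessian Poincar\'e inequality from Lemma \ref{lem:intparts}, and finally pass to the doubly weighted statement using the coarea device of Lemma \ref{lem:coarea}.

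\textbf{Step 1: Dimensional bookkeeping.} Set $q = n-p-2$. The hypothesis $\frac{n-2}{2}<p<n-2$ translates to $0<q<\frac{n-2}{2}$, and the excluded value $p=n/2$ (for $n\geq 5$) corresponds precisely to $q = (n-4)/2$. Hence Lemma \ref{lem:intparts} applies with this $q$, giving
\[ \|u\|_{2,-q} \leq C\,\|\mathrm{Hess}(u)\|_{0,-q-2}. \]
Note $-q = -n+p+2$ and $-q-2 = -n+p$, matching the target exponents.

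\textbf{Step 2: Algebraic inversion.} Taking the $\delta$-trace of $L^* u$,
\[ \mathrm{tr}_\delta(L^* u) = -n\Delta u + \Delta u = -(n-1)\Delta u, \]
so $\Delta u = -\frac{1}{n-1}\mathrm{tr}_\delta(L^* u)$, and consequently
\[ \mathrm{Hess}(u) \;=\; L^* u + (\Delta u)\delta \;=\; L^* u - \frac{1}{n-1}\mathrm{tr}_\delta(L^* u)\,\delta. \]
This yields a pointwise bound $|\mathrm{Hess}(u)| \leq C_n|L^* u|$ with a purely dimensional constant $C_n$. Integrating against the weight $r^{-n+2(p+2)-2n+2}\,d\mathscr{L}^n = r^{-n+2(-n+p)+2n}\,d\mathscr{L}^n$ (i.e.\ the $\mathcal{H}_{0,-q-2}$ norm squared) gives $\|\mathrm{Hess}(u)\|_{0,-n+p}\leq C_n\|L^* u\|_{0,-n+p}$. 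Combining with Step 1 yields the singly weighted estimate
\[ \|u\|_{2,-n+p+2} \leq C\,\|L^* u\|_{0,-n+p}. \]

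\textbf{Step 3: Upgrading to the doubly weighted estimate.} I apply the recipe described after Lemma \ref{lem:coarea}: the same inequality holds on each level domain $\Omega_t = \{\phi\geq t\}$, since for every $t\in(0,\phi_0)$ the set $\Omega_t$ is a smooth domain that is a cone outside $B_1(0)$ with opening angles sandwiched between $\theta_1$ and $\theta_2$, so the constants in Step 1 and Step 2 remain uniform in $t$ (they depend only on $n,p$ and on the cone geometry, not on the particular slice). Squaring the resulting inequality on $\Omega_t$, multiplying by $\tilde\rho'(t)\geq 0$ and integrating in $t\in(0,\phi_0)$ gives, thanks to Lemma \ref{lem:coarea},
\[ \|u\|_{2,-n+p+2,\rho} \leq C\,\|L^* u\|_{0,-n+p,\rho}, \]
as desired. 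A density argument then extends the bound from compactly supported smooth $u$ to all $u\in\mathcal{H}_{2,-n+p+2,\rho}(\Omega)$.

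\textbf{Main obstacle.} The genuinely non-trivial work is already encoded in Lemma \ref{lem:intparts} (the weighted Hessian Poincar\'e inequality), and the proposition itself reduces to algebraic manipulation plus the coarea upgrade. The only subtlety to verify carefully is the \emph{uniformity in $t$} of the constants when passing from Step 1--2 on $\Omega$ to Step 1--2 on $\Omega_t$; this requires noting that the integration-by-parts argument in the proof of Lemma \ref{lem:intparts} uses only the conical structure outside $B_1(0)$ and the sign of the boundary term from the radial test function $v=|x|^{2-n+2q}$, both of which persist uniformly across the foliation $\{\Omega_t\}_{t\in(0,\phi_0)}$.
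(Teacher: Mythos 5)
Your proof is correct and follows essentially the same route as the paper: the substitution $q=n-p-2$ to invoke Lemma \ref{lem:intparts}, the algebraic recovery $\mathrm{Hess}(u)=L^*u-\frac{1}{n-1}\mathrm{Tr}(L^*u)\,\delta$ at the flat metric, and the passage to the $\rho$-weighted estimate via Lemma \ref{lem:coarea}. (The only blemish is the garbled weight exponent written in Step 2 --- the correct weight for the $\mathcal{H}_{0,-n+p}$ norm is $r^{-n+2(n-p)}=r^{n-2p}$ --- but since your pointwise bound $|\mathrm{Hess}(u)|\leq C_n|L^*u|$ is uniform, the weighted inequality holds regardless and nothing is affected.)
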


\begin{proof}
Since $q=n-p-2$ satisfies $0<q<(n-2)/2$ and $q\neq (n-4)/2$ (by our assumption), we may apply the lemma to obtain
\[  \|u\|_{2,-n+p+2}\leq C\|Hess(u)\|_{0,-n+p}.
\]
Thus to complete the proof it suffices to show
\[ \|Hess(u)\|_{0,-n+p}\leq C\|L^*(u)\|_{0,-n+p}.
\]
Recalling that we are working at the Euclidean metric (and will then deduce a general coercivity result by perturbation) we can simply take the trace in the definition of the operator $L^{\ast}$ thereby obtaining
\[ Tr(L^*(u))=(1-n)\Delta u,\ \mbox{or}\ \Delta u=-1/(n-1)Tr(L^*(u)),
\]
frow which it follows that 
\[ Hess(u)=L^*(u)-1/(n-1)Tr(L^*(u))\delta
\]
and therefore
\[ \|Hess(u)\|_{0,-n+p}\leq C\|L^*(u)\|_{0,-n+p}
\]
which provides the Basic Estimate at the Euclidean metric. 
\end{proof}

\subsection{Coercivity of the Lie operator}\label{subs:coercLie}

We now consider the vector constraint equation, and prove coercivity of the differential ${d\Phi^{\ast}}^{(2)}$ by first analyzing the decoupled case when $\pi=0$.
Let $\mathcal D$ denote the Killing operator acting on vector fields
\[ \mathcal D(Y)(Z,W)=D_ZY\cdot W+D_WY\cdot Z.
\]
\begin{lem}\label{pro:basic2} Given a real number $0<q<\frac{n-2}{2}$, there exists a positive constant $C$ such that 
\[ \int_\Omega |Y|^2r^{-n+2q}\ d\mathscr{L}^n\leq C\int_\Omega|\mathcal D(Y)|^2r^{2-n+2q}\ d\mathscr{L}^n
\]
for any smooth vector field $Y$ in $\Omega$ which has bounded support (no condition on $\partial\Omega$).
\end{lem}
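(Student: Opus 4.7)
I would follow the multiplier strategy used in Lemma \ref{lem:intparts}, but adapted to the tensorial setting. The test function is $v(x) = |x|^{2-n+2q}$, which satisfies $\Delta v = -\delta |x|^{-n+2q}$ with $\delta = 2q(n-2-2q) > 0$. The gradient of $v$ is purely radial, and since the outward unit normal $\nu$ on the conical portion of $\partial\Omega$ is perpendicular to $x$, integration by parts against radial multipliers weighted by radial functions produces no boundary contribution on $\partial\Omega \setminus B_1(0)$. This is the crucial geometric input that makes the cone setting tractable; the boundary contributions on $\partial B_1(0)\cap\Omega$ and on the regularization region of $\partial\Omega$ near the origin are absorbed via a standard Korn--Poincar\'e estimate on the bounded Lipschitz domain $\Omega\cap B_2(0)$, as in the second step of the proof of Lemma \ref{lem:intparts}.

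The starting point is the pointwise identity
\[ |Y|^2 \;=\; Y\cdot\nabla(Y\cdot x) \;-\; \mathcal{D}(Y)(x,Y) \;+\; \tfrac{1}{2}(x\cdot\nabla)|Y|^2, \]
which follows from combining the Leibniz rule $(Y\cdot\nabla)(Y\cdot x) = x^i Y^j\partial_j Y_i + |Y|^2$ with the symmetric/antisymmetric splitting $\partial_j Y_i = \tfrac{1}{2}\mathcal{D}(Y)_{ij} + \tfrac{1}{2}(\partial_j Y_i-\partial_i Y_j)$. Multiplying by the radial weight $w = r^{-n+2q}$ and integrating, the last term becomes $-q\int w|Y|^2$ via $\mathrm{div}(xw)=2qw$ and the cone-tangency of $x$; a further integration by parts of $\int w\,Y\cdot\nabla(Y\cdot x)$ produces a positive Hardy-type contribution $(n-2q)\int w(Y\cdot\hat x)^2$ together with a cross-term $\int w(Y\cdot x)\,\mathrm{div}\,Y$ controlled by Cauchy--Schwarz. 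The outcome is
\[ (1+q)\,\|Y\|_{0,-q}^2 \;-\; (n-2q)\int_\Omega w(Y\cdot\hat x)^2 \;\le\; C\,\|Y\|_{0,-q}\,\|\mathcal{D}(Y)\|_{0,-q-1} + \text{BT}. \]
Decomposing $Y = Y^R + Y^\perp$ into radial and angular components (so that $(Y\cdot\hat x)^2=|Y^R|^2$), the left-hand side reads $(3q+1-n)\|Y^R\|_{0,-q}^2 + (1+q)\|Y^\perp\|_{0,-q}^2$, and Young's inequality yields the conclusion for the angular piece $Y^\perp$.

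For the radial piece, where the coefficient $3q+1-n$ may be nonpositive, an independent argument is needed. A direct computation in polar coordinates shows that for $Y^R = y(x)\hat x$ one has the explicit formula
\[ |\mathcal{D}(Y^R)|^2 \;=\; 2|\nabla y|^2 \;+\; 2(\nabla y\cdot\hat x)^2 \;+\; 4(n-1)\,\frac{y^2}{|x|^2}, \]
so the intrinsic Hardy-type piece $4(n-1)y^2/|x|^2$ already gives $\|Y^R\|_{0,-q}^2 \le (4(n-1))^{-1}\|\mathcal{D}(Y^R)\|_{0,-q-1}^2$; combining with the angular estimate and handling the cross term $\mathcal{D}(Y^R):\mathcal{D}(Y^\perp)$ by Cauchy--Schwarz closes the proof. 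The main obstacle is precisely this balance between radial and angular pieces of $Y$: the direct multiplier identity has a sign-definite contribution only from the angular part, and the radial part must be controlled using the intrinsic Hardy-type structure of $\mathcal{D}$ on radial fields, which is a particular feature of the Killing operator on Euclidean space.
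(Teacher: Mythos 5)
Your opening pointwise identity is correct, and the multiplier strategy itself runs parallel to the paper's argument (the paper also starts from $\int_\Omega|Y|^2r^{-n+2q}=-\tfrac1q\int_\Omega (Y\cdot\partial_rY)\,r^{1-n+2q}$ and rewrites $Y\cdot\partial_rY$ through $\mathcal D(Y)$). But there is a genuine gap, and it sits exactly at the crux of the lemma: your claim that radial multipliers produce no boundary contribution on $\partial\Omega\setminus B_1(0)$ applies only to fluxes proportional to $x$, such as the one coming from $\tfrac12\int_\Omega w\,(x\cdot\nabla)|Y|^2$, whose flux $\tfrac12 w|Y|^2x$ indeed satisfies $x\cdot\nu=0$ on the cone. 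It does not apply to the integration by parts of $\int_\Omega w\,Y\cdot\nabla(Y\cdot x)$: there the flux is $w\,(Y\cdot x)\,Y$, with normal component $w\,(Y\cdot x)(Y\cdot\nu)$, and since $Y$ satisfies no boundary condition this is a nontrivial lateral boundary integral of the type $\int_{\partial\Omega}|Y_{n-1}Y_n|\,r^{1-n+2q}\,d\mathscr{H}^{n-1}$ (normal component times radial component), supported on the whole unbounded lateral boundary, not just near the vertex. Controlling precisely this term is what occupies most of the paper's proof: trace estimates on the angular slices $\Sigma_t=\{\theta=t\}$ via the coarea formula and an intermediate-value choice of a good slice, the identities $e_{n-1}(Y_{n-1})=\tfrac12\mathcal D(Y)(e_{n-1},e_{n-1})-r^{-1}Y_n$ and $e_{n-1}(Y_n)=\mathcal D(Y)(e_{n-1},e_n)-e_n(Y_{n-1})+r^{-1}Y_{n-1}$, a further integration by parts, and a final $\varepsilon$-absorption. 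Your proposal leaves this term as an unexamined ``BT'' and asserts it is confined to the compact regularization region, which is false.

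There is a second gap in the treatment of the radial piece. The formula $|\mathcal D(Y^R)|^2=2|\nabla y|^2+2(\nabla y\cdot\hat{x})^2+4(n-1)y^2/|x|^2$ is correct, but it only yields $\|Y^R\|_{0,-q}\leq C\|\mathcal D(Y^R)\|_{0,-q-1}$, not a bound by $\|\mathcal D(Y)\|_{0,-q-1}$. Passing from one to the other requires controlling the cross term $\int_\Omega\mathcal D(Y^R)\!:\!\mathcal D(Y^\perp)\,r^{2-n+2q}$, and Cauchy--Schwarz only trades it for $\|\mathcal D(Y^\perp)\|_{0,-q-1}$, which is not controlled by the right-hand side of the lemma; bounding it would amount to a Korn-type inequality for the decomposition, i.e.\ essentially the statement being proved (note that $\nabla y=\nabla(Y\cdot\hat{x})$ contains tangential derivatives of $Y$ that $\mathcal D(Y)$ does not control pointwise). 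The paper avoids this circularity by estimating the radial component directly from the pointwise identity $\partial_rY_n=\tfrac12\mathcal D(Y)(e_n,e_n)$ and a purely radial integration by parts, whose flux is radial and hence produces no lateral boundary term. So the two places where the real work of the lemma happens --- the lateral boundary term and the radial component --- are not handled by your argument as written.
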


\begin{proof} We can use the standard arguments to get the bound on a compact set,
so it suffices to consider vector fields $Y$ which are defined on a truncated conical subregion of
$\Omega$ (say $\Omega\setminus B_{1}$), have bounded support, and vanishing on the inner boundary of $\Omega$. This
can be done by replacing $Y$ by $\zeta Y$ where $\zeta$ is a cutoff function which
is one outside a fixed ball and zero inside a fixed smaller ball. 

We first obtain the bound on the radial component of $Y$. We will work in orthonormal
bases for which $e_n=\partial_r$, so this component is denoted $Y_n$. Following the very same 
argument presented in the proof of Lemma \ref{lem:intparts} we obtain
\[ \int_\Omega(Y_n)^2r^{-n+2q}\ d\mathscr{L}^{n}=- \frac{1}{q}\int_\Omega(Y_n\partial_r Y_n)r^{1-n+2q}\ d\mathscr{L}^{n}.
\]
Since $D_{e_n}e_n=0$, we have 
$\partial_rY_n=\frac{1}{2}\mathcal D(Y)_{nn}$, so we can easily get
\[ \int_\Omega(Y_n)^2r^{-n+2q}\ d\mathscr{L}^{n}\leq C\int_\Omega|\mathcal D(Y)|^2 r^{2-n+2q}\ d\mathscr{L}^{n}.
\]
This gives the desired bound for $Y_n$.

Away from the axis of $\Omega$ (namely from the line generated by the vector $a$) we define orthonormal vector fields $e_{n-1},e_n$ where
$e_n=\partial_r$, the unit radial vector, $e_{n-1}=r^{-1}\partial_\theta$, the unit
vector field tangent to the spheres pointing away from the axis. We use the
notation $Y_i=Y\cdot e_i$ for $i=n-1,n$. To obtain the general bound, we have as above 
\[ \int_\Omega|Y|^2r^{-n+2q}\ d\mathscr{L}^{n}=-\frac{1}{q}\int_\Omega(Y\cdot\partial_rY)r^{1-n+2q}\ d\mathscr{L}^{n}.
\]
We observe that since $e_n=\partial_r$, we may write $Y=Z+Y_ne_n$
where $Z$ is orthogonal to the radial direction. We then have
\[ Y\cdot\partial_rY=\mathcal D(Y)(e_n,Y)-D_YY\cdot e_n,
\]
and
\[ D_YY\cdot e_n=D_ZZ\cdot e_n+Y_n D_{e_n}(Y_ne_n)\cdot e_n
+Y_n D_{e_n}Z\cdot e_n+D_Z(Y_ne_n)\cdot e_n.
\]
This expression simplifies to
\[ D_YY\cdot e_n=-r^{-1}|Z|^2+Y_ne_n(Y_n)+Z(Y_n)=-r^{-1}|Z|^2+Y(Y_n).
\]
It follows that
\[ \int_\Omega|Y|^2r^{-n+2q}\ d\mathscr{L}^{n}\leq \frac{1}{q}\int_\Omega|Y||\mathcal D(Y)|r^{1-n+2q}\ d\mathscr{L}^{n}
+\frac{1}{q}\int_\Omega Y(Y_n)r^{1-n+2q}\ d\mathscr{L}^{n}.
\]
We may integrate the second term by parts to obtain
\begin{align*}  \int_\Omega|Y|^2r^{-n+2q}\ d\mathscr{L}^{n} & \leq \frac{1}{q}\int_\Omega|Y||\mathcal D(Y)|r^{1-n+2q}\ d\mathscr{L}^{n}
-\frac{1}{q} \int_\Omega Div(r^{1-n+2q}Y) Y_n\ d\mathscr{L}^{n} \\
& +\frac{1}{q}\int_{\partial \Omega}|Y_{n-1}Y_n|r^{1-n+2q}\ d\mathscr{H}^{n-1}
\end{align*}
where we have used that fact that $e_{n-1}$ is the unit normal vector to
$\partial \Omega$. Since $Div(Y)$
is bounded by a fixed constant times $|\mathcal D(Y)|$ we obtain
\[ \int_\Omega|Y|^2r^{-n+2q}\ d\mathscr{L}^{n}\leq C\int_\Omega|Y||\mathcal D(Y)|r^{1-n+2q}\ d\mathscr{L}^{n}
+C\int_\Omega(Y_n)^2r^{-n+2q}\ d\mathscr{L}^{n}+C\int_{\partial \Omega}|Y_{n-1}Y_n|r^{1-n+2q}\ d\mathscr{H}^{n-1}.
\]
This clearly implies from our previous bound on $Y_n$
\begin{equation}\label{eq:central}
 \int_\Omega|Y|^2r^{-n+2q}\ d\mathscr{L}^{n}\leq C\int_\Omega|\mathcal D(Y)|^2r^{2-n+2q}\ d\mathscr{L}^{n}+
C\int_{\partial \Omega}|Y_{n-1}Y_n|r^{1-n+2q}\ d\mathscr{H}^{n-1}.
\end{equation}
as claimed. 

It remains to handle the boundary term. We have
\[\int_{\partial \Omega}|Y_{n-1}Y_n|r^{1-n+2q}\ d\mathscr{H}^{n-1}\leq \varepsilon/2\int_{\partial \Omega}|Y_{n-1}|^2r^{1-n+2q}\ d\mathscr{H}^{n-1}+(2\varepsilon)^{-1} \int_{\partial \Omega}|Y_n|^2r^{1-n+2q}\ d\mathscr{H}^{n-1}
\]
for any $\varepsilon>0$ to be chosen small enough. We recall that the region 
$\Omega$ is defined (outside of $B_1$) by $\theta_{1}\leq \theta\leq \theta_{2}$, and we let $\Sigma_\alpha$ denote the
hypersurface $\{\theta=\alpha\}$ for any $\alpha\in (0,\theta_2]$ so that $\partial \Omega=\Sigma_{\theta_1}\cup\Sigma_{\theta_{2}}$. In particular, the function $\theta$ should not be confused with $\phi$, which is instead (roughly speaking) the angular distance from $\partial\Omega$.  We also let $\Theta_\alpha$ denote the cone $\{0\leq\theta\leq\alpha\}$ so that
$\Omega=\Theta_{\theta_2}\setminus \Theta_{\theta_{1}}$.

For any smooth function $u$ with bounded
support in $\Omega$ and vanishing in a fixed neighborhood of the origin we have from the
coarea formula
\[ \int_{\Theta_{\theta_{2}}\setminus \Theta_{\theta_{1}}}u^2r^{-n+2q}\ d\mathscr{L}^{n}=\int_{\theta_{1}}^{\theta_2}\int_{\Sigma_t}
u^2r^{1-n+2q}\ d\mathscr{H}^{n-1}\ d\mathscr{L}^{1}
\]
since $|\nabla\theta|=r^{-1}$. Now, let us set
\[ I(t)=\int_{\Sigma_t}u^2r^{1-n+2q}\ d\mathscr{H}^{n-1}.
\]
For any $\alpha\in(\theta_{1},\theta_{2})$ we also have, by differentiating and applying the fundamental
theorem of calculus together with the coarea formula
\begin{align*} I(\theta_2)-I(\alpha)&=\int_\alpha^{\theta_2}\int_{\Sigma_t}(\partial_\theta(u^2)+(n-2)\cot(t)u^2)r^{1-n+2q}
\ d\mathscr{H}^{n-1}\ d\mathscr{L}^{1}\\
&=\int_{\Theta_{\theta_{2}}\setminus \Theta_{\alpha}}(re_{n-1}(u^2)+(n-2)\cot(\theta)u^2)r^{-n+2q}\ d\mathscr{L}^{n}
\end{align*}
as well as
\begin{align*} I(\alpha)-I(\theta_{1})&=\int_{\theta_{1}}^{\alpha}\int_{\Sigma_t}(\partial_\theta(u^2)+(n-2)\cot(t)u^2)r^{1-n+2q}
\ d\mathscr{H}^{n-1}\ d\mathscr{L}^{1}\\
&=\int_{\Theta_{\alpha}\setminus \Theta_{\theta_{1}}}(re_{n-1}(u^2)+(n-2)\cot(\theta)u^2)r^{-n+2q}\ d\mathscr{L}^{n}.
\end{align*}

In order to bound $I(\theta_2)$, we may use such formula and the intermediate value theorem to find $\alpha\in (\theta_{1},\theta_{2})$ so that
\[ I(\alpha)\leq 2/(\theta_2-\theta_1)\int_\Omega u^2r^{-n+2q}\ d\mathscr{L}^{n}.
\]
We then have from the formula above (note that $|\cot(\theta)|$ is bounded from above when
$\theta\in [\theta_1,\theta_2]$)
\[ \int_{\Sigma_{\theta_2}}u^2r^{1-n+2q}\ d\mathscr{H}^{n-1}=I(\theta_2)\leq C\int_\Omega u^2r^{-n+2q}\ d\mathscr{L}^{n}
+2\int_{\Theta_{\theta_{2}}\setminus{\Theta_{\alpha}}}u(e_{n-1}u)r^{1-n+2q}\ d\mathscr{L}^{n}
\]
and similarly for $I(\theta_{1})$ modulo sign changes, where appropriate.
So, in the end can write
\begin{align*}\int_{\partial \Omega}u^2r^{1-n+2q}\ d\mathscr{H}^{n-1} & \leq C\int_\Omega u^2r^{-n+2q}\ d\mathscr{L}^{n}
+2\int_{\Theta_{\theta_{2}}\setminus{\Theta_{\alpha}}}u(e_{n-1}u)r^{1-n+2q}\ d\mathscr{L}^{n} \\
& -2\int_{\Theta_{\alpha}\setminus{\Theta_{\theta_{1}}}}u(e_{n-1}u)r^{1-n+2q}\ d\mathscr{L}^{n}.
\end{align*}

Taking $u=Y_{n-1}$ we observe that
\[ e_{n-1}Y_{n-1}=D_{e_{n-1}}Y\cdot e_{n-1}+Y\cdot D_{e_{n-1}}e_{n-1}=\frac{1}{2}
\mathcal D(Y)(e_{n-1},e_{n-1})-r^{-1}Y_n.
\]
Therefore we have
\[ \int_{\partial \Omega}(Y_{n-1})^2r^{1-n+2q}\ d\mathscr{H}^{n-1}\leq C\int_\Omega((Y_{n-1})^2+(Y_n)^2)r^{-n+2q}\ d\mathscr{L}^{n}
+C\int_\Omega|\mathcal D(Y)|^2 r^{2-n+2q}\ d\mathscr{L}^{n}.
\]

Taking $u=Y_n$ we have
\[ e_{n-1}Y_n=D_{e_{n-1}}Y\cdot e_n+Y\cdot D_{e_{n-1}}e_n=
\mathcal D(Y)(e_{n-1},e_n)-D_{e_n}Y\cdot e_{n-1}+r^{-1}Y_{n-1}.
\]
Now we rewrite
\[ D_{e_n}Y\cdot e_{n-1}=e_n(Y_{n-1})-Y\cdot D_{e_n}e_{n-1}=e_n(Y_{n-1})
\]
and thus
\begin{align*} \int_{\partial \Omega}(Y_n)^2r^{1-n+2q}\ d\mathscr{H}^{n-1}&\leq C\int_\Omega((Y_n)^2+|Y_{n-1}Y_n|)r^{-n+2q}\ d\mathscr{L}^{n}-2\int_{\Theta_{\theta_{2}}\setminus \Theta_{\alpha}}Y_ne_n(Y_{n-1})r^{1-n+2q}\ d\mathscr{L}^{n}\\
&+2\int_{\Theta_{\alpha}\setminus \Theta_{\theta_{1}}}Y_ne_n(Y_{n-1})r^{1-n+2q}\ d\mathscr{L}^{n}+C\int_\Omega|\mathcal D(Y)|^2 r^{2-n+2q}\ d\mathscr{L}^{n}.
\end{align*}
We can write the term in the second and third integrals as
\[ Y_ne_n(Y_{n-1})=e_n(Y_nY_{n-1})-e_n(Y_n)Y_{n-1}=e_n(Y_nY_{n-1})-
\frac{1}{2}Y_{n-1}\mathcal D(Y)(e_n,e_n).
\]
After an integration by parts we arrive at the inequality
\[ \int_{\partial \Omega}(Y_n)^2r^{1-n+2q}\ d\mathscr{H}^{n-1}\leq C\int_\Omega((Y_n)^2+|Y_{n-1}Y_n|+r|Y_{n-1}|
|\mathcal D(Y)|)r^{-n+2q}\ d\mathscr{L}^{n}+C\int_\Omega|\mathcal D(Y)|^2 r^{2-n+2q}\ d\mathscr{L}^{n}.
\]

We can finally put things together and complete the proof by estimating the boundary term as follows
\begin{align*} \int_{\partial \Omega}|Y_{n-1}Y_n|r^{1-n+2q}\ d\mathscr{H}^{n-1}&\leq C\varepsilon\int_\Omega|Y|^2r^{-n+2q}\ d\mathscr{L}^{n}\\
&+C\varepsilon^{-1}\int_\Omega((Y_n)^2+|Y_{n-1}Y_n|+r|Y_{n-1}||\mathcal D(Y)|+r^2|\mathcal D(Y)|^2)r^{-n+2q}\ d\mathscr{L}^{n}
\end{align*}
Combining this inequality above with \eqref{eq:central}, and fixing $\varepsilon$ small enough we can absorb the first
term to obtain
\[ \int_\Omega|Y|^2r^{-n+2q}\ d\mathscr{L}^{n}\leq C\int_\Omega((Y_n)^2+|Y_{n-1}Y_n|+r|Y_{n-1}||\mathcal D(Y)|+r^2|\mathcal D(Y)|^2)r^{-n+2q}\ d\mathscr{L}^{n}.
\]
From our bound on $Y_n$ and easy estimates we now obtain the desired conclusion.
\end{proof}

\begin{proposition}(Basic Estimate II)\label{pro:basicIIup}
Given a real number $0<q<\frac{n-2}{2}$ there exists a positive constant $C$ such that 
\[ \int_\Omega |\nabla Z|^2 r^{2-n+2q}\rho \ d\mathscr{L}^{n}\leq C\int_\Omega|\mathcal D(Z)|^2r^{2-n+2q}\rho\ d\mathscr{L}^{n}
\]
for any smooth vector field $Z$
in $\Omega$ which has bounded support (no condition on $\partial\Omega$).
Hence, thanks to Proposition \ref{pro:basic2} and Lemma \ref{lem:coarea} we have for all $p\in (\frac{n-2}{2},n-2)$ 
\[
\left\|Z\right\|_{\mathcal{H}_{1,-n+p+2,\rho}}\leq C \left\|\mathcal{D}(Z)\right\|_{\mathcal{M}_{0,-n+p+1,\rho}} \ \textrm{for all} \ Z\in \mathcal{X}_{0,-n+p+1,\rho}.
\]
\end{proposition}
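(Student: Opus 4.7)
The plan is to obtain the first displayed inequality (a weighted Korn-type estimate) by combining the classical boundary-free Korn second inequality on a bounded Lipschitz domain, a dyadic scaling argument along the cone, and Proposition \ref{pro:basic2}, which provides the absorption of the resulting lower-order term. The second displayed inequality is then immediate from the first together with Proposition \ref{pro:basic2}, each transferred from singly to doubly weighted norms via Lemma \ref{lem:coarea}.

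Concretely, I decompose the exterior region of $\Omega$ into dyadic conical frusta $A_j = \Omega\cap\{2^j\leq r\leq 2^{j+1}\}$, each of which is a Lipschitz rescaling of a fixed reference frustum $A_0$. On $A_0$ the classical Korn second inequality (with no boundary condition) reads
\[
\int_{A_0}|\nabla Z|^2\,d\mathscr{L}^n \leq C\Bigl(\int_{A_0}|\mathcal D(Z)|^2\,d\mathscr{L}^n + \int_{A_0}|Z|^2\,d\mathscr{L}^n\Bigr).
\]
Since $\nabla$ and $\mathcal D$ are first-order operators with identical scaling, rescaling to $A_j$ yields
\[
\int_{A_j}|\nabla Z|^2\,d\mathscr{L}^n \leq C\int_{A_j}|\mathcal D(Z)|^2\,d\mathscr{L}^n + C\,2^{-2j}\int_{A_j}|Z|^2\,d\mathscr{L}^n
\]
with $C$ independent of $j$ (the bounded core of $\Omega$ being absorbed into a single application of classical Korn on a fixed Lipschitz region). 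Multiplying by $2^{j(2-n+2q)}$, which is comparable to $r^{2-n+2q}$ on $A_j$, and summing, the deformation term on the right recovers the same weight while the $|Z|^2$ term picks up the shifted weight $r^{-n+2q}$. Invoking Proposition \ref{pro:basic2} to control $\int_\Omega|Z|^2 r^{-n+2q}\,d\mathscr{L}^n$ by $\int_\Omega|\mathcal D(Z)|^2 r^{2-n+2q}\,d\mathscr{L}^n$ then delivers the singly weighted inequality
\[
\int_\Omega|\nabla Z|^2 r^{2-n+2q}\,d\mathscr{L}^n \leq C\int_\Omega|\mathcal D(Z)|^2 r^{2-n+2q}\,d\mathscr{L}^n.
\]

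To insert the angular weight $\rho$, I observe that the argument above applies verbatim on every sub-cone $\Omega_t = \{\phi\geq t\}$, since each $\Omega_t$ is itself a regularized cone whose apertures lie in a fixed compact subinterval of $(0,\pi)$; consequently the constants produced by both Korn and Proposition \ref{pro:basic2} are uniform in $t\in[0,\phi_0]$. Applying Lemma \ref{lem:coarea} to $\zeta = |\nabla Z|^2 r^{2-n+2q}$ and to $\zeta = |\mathcal D(Z)|^2 r^{2-n+2q}$, and using the positivity of $\tilde{\rho}'$, converts the estimate into its doubly weighted form and proves the first claim. The full $\mathcal{H}_{1,-n+p+2,\rho}$-bound on $Z$ of the second claim is then obtained by adjoining the zeroth-order piece, which is Proposition \ref{pro:basic2} upgraded in exactly the same way via Lemma \ref{lem:coarea}, taking $q = n-p-2 \in (0,(n-2)/2)$. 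The main difficulty I anticipate is the uniformity of constants: across the dyadic scales it is automatic from the equal scaling of $\nabla$ and $\mathcal D$, but across the family $\{\Omega_t\}$ it rests on the fact that the boundary trace estimates underpinning Proposition \ref{pro:basic2} remain uniformly controlled as $t$ varies, which is guaranteed because the apertures of $\Omega_t$ never approach $0$ or $\pi$.
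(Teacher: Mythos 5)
Your treatment of the singly weighted inequality is fine: dyadic frusta, the classical Korn second inequality on a fixed reference frustum, the equal scaling of $\nabla$ and $\mathcal{D}$, and Lemma \ref{pro:basic2} to absorb the zeroth-order term. The genuine gap is in the passage to the angular weight $\rho=\phi^{2N}$. You justify the uniformity of the constants over the family $\Omega_t=\{\phi\geq t\}$ by saying that the apertures of $\Omega_t$ stay in a compact subinterval of $(0,\pi)$; but the relevant degeneracy is not the aperture, it is the \emph{angular thickness} of the conical shell $\Omega_t$, which tends to $0$ as $t\to\phi_0$. On a shell of angular width $\delta$ the constant in Korn's second inequality (no boundary condition) blows up as $\delta\to 0$: Kirchhoff-type bending fields, in flat-slab coordinates $Z=(-x_n\partial_1 w,\dots,-x_n\partial_{n-1}w,\,w)$ with $w$ oscillating at frequency of order $\delta^{-1/2}$ and transplanted into a dyadic piece of the thin shell, make the ratio of $\int|\nabla Z|^2$ to $\int|\mathcal{D}(Z)|^2+\int|Z|^2$ (suitably scaled) of size $\delta^{-1}$. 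So the per-$t$ inequality you propose to integrate against $\tilde{\rho}'(t)$ does not hold with a $t$-independent constant, and Lemma \ref{lem:coarea} does not deliver the doubly weighted estimate; the same objection applies to your appeal to uniformity of the boundary/trace constants entering Proposition \ref{pro:basic2}, whose proof produces a factor proportional to the reciprocal of the angular width.

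This is precisely the difficulty the paper's argument is designed to overcome, and it cannot be bypassed by the coarea shortcut alone. In the paper one performs the Korn-type integration by parts on each $\Omega_t$ keeping the boundary terms on $\partial\Omega_t$; after integrating in $t$ these produce the term $2N\int_\Omega |Z_jZ_{i;i}|\,\phi^{-1}r^{1-n+2q}\rho$, hence (by Cauchy--Schwarz) the dangerous quantity $\int_\Omega|Z|^2\phi^{-2}r^{-n+2q}\rho$, which is \emph{not} controlled by the $\rho$-weighted $L^2$ norm of $Z$. That quantity is then estimated near $\partial\Omega$ by applying the divergence theorem to $\xi|Z|^2r^{2-n+2q}\nabla\rho$ and exploiting the specific structure of the weight: $\Delta\rho\geq CN^2r^{-2}\phi^{-2}\rho$ on the support of $\xi$, the favorable sign of $\nabla\rho\cdot\nu$ on $\partial\Omega$, and $\nabla r\perp\nabla\phi$; finally $N$ is chosen large so that the resulting Dirichlet term can be absorbed. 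Your proposal is missing some substitute for this step, i.e.\ a quantitative bound on the near-boundary term $\int_\Omega|Z|^2\phi^{-2}r^{-n+2q}\rho$ in terms of $\int_\Omega|\mathcal{D}(Z)|^2r^{2-n+2q}\rho$ and $\int_\Omega|Z|^2r^{-n+2q}\rho$.
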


\begin{proof}
By virtue of the previous Lemma \ref{pro:basic2} (and its obvious weighted counterpart, gotten by applying Lemma \ref{lem:coarea}), it is enough for us to prove that 
\[ \int_\Omega |\nabla Z|^2 r^{2-n+2q}\rho \ d\mathscr{L}^{n}\leq C\int_\Omega|\mathcal D(Z)|^2r^{2-n+2q}\rho\ d\mathscr{L}^{n}+C\int_{\Omega}|Z|^{-n+2q}\rho\,d\mathscr{L}^{n}.
\]
Now, because of the very definition of Lie derivative 
\[\int_{\Omega} (Z^{2}_{i;j}+Z^{2}_{j;i})r^{2-n+2q}\rho\, d\mathscr{L}^{n}\leq C\int_\Omega|\mathcal D(Z)|^2r^{2-n+2q}\rho\, d\mathscr{L}^{n}-2\int_{\Omega}Z_{i;j}Z_{j;i}r^{2-n+2q}\rho\,d\mathscr{L}^{n}
\]
and thus we are reduced to proving, for indices $i\neq j$ an inequality of the form
\begin{align*} -2\int_{\Omega}Z_{i;j}Z_{j;i}r^{2-n+2q}\rho\,d\mathscr{L}^{n} & \leq \varepsilon \int_{\Omega} |\nabla Z|^{2}r^{2-n+2q}\rho\, d\mathscr{L}^{n}\\
& +\varepsilon^{-1}\left(\int_\Omega|\mathcal D(Z)|^2r^{2-n+2q}\rho \,d\mathscr{L}^{n}+\int_{\Omega}|Z|^{-n+2q}\rho\,d\mathscr{L}^{n}\right)
\end{align*}
for some $\varepsilon$ small enough that the Dirichlet term can be absorbed back in the left-hand side.
In partial analogy with the strategy that has been followed for proving Lemma \ref{pro:basic2}, we will apply some integration by parts (or, more precisely, we will use the divergence theorem) and, as will be clear from the sequel of our argument, the delicate point will be to control the boundary terms that may possibly arise in doing that. Since the outer normal to $\Omega$ is given (modulo sign) by $e_{n-1}$, we will limit ourselves to treat the case when $i\neq n-1$ and $j=n-1$. Otherwise the proof is strictly simpler and in fact does not require any delicate estimate.
For any $t\in (0,\phi_{0})$ let us recall that $\Omega_{t}=\left\{p\in\Omega : \phi(p,\partial\Omega)\geq t\right\}$. Applying the divergence theorem in $\Omega_{t}$ we get
\[ -2\int_{\Omega_{t}}Z_{i;j}Z_{j;i}r^{2-n+2q}\,d\mathscr{L}^{n}\leq 2\int_{\Omega_{t}}Z_{j}Z_{i;ji}r^{2-n+2q}\,d\mathscr{L}^{n} + C\int_{\Omega_{t}}|Z_{j}Z_{i;j}|r^{1-n+2q}\,d\mathscr{L}^{n}
\]
for some positive constant $C$ depending on $n$ and $q$ only. The standard rearrangement trick allows to treat the second summand on the right-hand side, so we only need to get an upper bound for
\[ 2\int_{\Omega_{t}}Z_{j}Z_{i;ji}r^{2-n+2q}\,d\mathscr{L}^{n}.
\]
Since the background metric is Euclidean, possibly by introducing other lower order terms (that can be treated as above) we can interchange the order of derivatives from $D_{e_{i}}D_{e_{j}}$ to $D_{e_{j}}D_{e_{i}}$ and hence we need to deal with
\[ 2\int_{\Omega_{t}}Z_{j}Z_{i;ij}r^{2-n+2q}\,d\mathscr{L}^{n}.
\]
Applying the divergence theorem again, we obtain
\begin{align*} 2\int_{\Omega_{t}}Z_{j}Z_{i;ij}r^{2-n+2q}\,d\mathscr{L}^{n}  \leq & -2\int_{\Omega_{t}}Z_{i;i}Z_{j;j}r^{2-n+2q}\,d\mathscr{L}^{n}+2\int_{\partial\Omega_{t}}|Z_{j}Z_{i;i}|r^{2-n+2q}\,d\mathscr{H}^{n-1} \\
& + C\int_{\Omega_{t}}|Z_{j}Z_{i;i}|r^{1-n+2q}\,d\mathscr{L}^{n}. 
\end{align*}
Putting together the previous inequalities and applying the coarea type formula given by Lemma \ref{lem:coarea} we come to an inequality of the form
\[ \left\|Z\right\|^{2}_{\mathcal{H}_{1,-q,\rho}}\leq C\left[\left\|\mathcal{D}Z\right\|^{2}_{\mathcal{H}_{0,-q-1,\rho}}+\left\|Z\right\|^{2}_{\mathcal{H}_{0,-q,\rho}}+2N\sum_{i,j}\int_{0}^{\phi_{0}}\phi^{2N-1}\int_{\partial\Omega_{\phi}}|Z_{j}Z_{i;i}|r^{2-n+2q}\,d\mathscr{H}^{n-1}d\mathscr{L}^{1}\right]
\]
and thus 
\[\left\|Z\right\|^{2}_{\mathcal{H}_{1,-q,\rho}}\leq C\left[\left\|\mathcal{D}Z\right\|^{2}_{\mathcal{H}_{0,-q-1,\rho}}+\left\|Z\right\|^{2}_{\mathcal{H}_{0,-q,\rho}}+2N\sum_{i,j}\int_{\Omega}|Z_{j}Z_{i;i}|\phi^{-1}r^{1-n+2q}\rho\,d\mathscr{L}^{n}\right]
\]
where in both cases $C$ denotes a positive constant only depending on $n$ and $q$.
Therefore, since the last summand on the right-hand side can be bounded from above by
\[ 2N\int_{\Omega}|Z_{j}Z_{i;i}|\phi^{-1}r^{1-n+2q}\rho\,d\mathscr{H}^{n-1}d\mathscr{L}^{1}\leq N^{2}\int_{\Omega}Z^{2}_{i;i}r^{2-n+2q}\rho\,d\mathscr{L}^{n}+\int_{\Omega}Z^{2}_{j}\phi^{-2}r^{-n+2q}\rho\,d\mathscr{L}^{n}
\]
we have proven that in fact
\[ \left\|Z\right\|^{2}_{\mathcal{H}_{1,-q,\rho}}\leq C\left[N^{2}\left\|\mathcal{D}Z\right\|^{2}_{\mathcal{H}_{0,-q-1,\rho}}+\left\|Z\right\|^{2}_{\mathcal{H}_{0,-q,\rho}}+\int_{\Omega}|Z|^{2}\phi^{-2}r^{-n+2q}\rho\,d\mathscr{L}^{n}\right]
\]
which implies our conclusion once we show that
\[ \int_{\Omega}|Z|^{2}\phi^{-2}r^{-n+2q}\rho\,d\mathscr{L}^{n}\leq C\int_{\Omega}|Z|^{2}r^{-n+2q}\rho\,d\mathscr{L}^{n}+\frac{C}{N^{2}}\int_{\Omega}|\nabla Z|^{2}r^{2-n+2q}\rho\,d\mathscr{L}^{n}.
\]
To that aim, let us introduce an angular cut-off function $\xi$ that equals 1 for $\phi(\cdot,\partial\Omega)\leq \phi_{0}/3$ and 0 for $\phi(\cdot,\partial\Omega)\geq \phi_{0}/2$. It is obvious that 
\[ \int_{\Omega}(1-\xi)|Z|^{2}\phi^{-2}r^{-n+2q}\rho\,d\mathscr{L}^{n}\leq C\int_{\Omega}|Z|^{2}r^{-n+2q}\rho\,d\mathscr{L}^{n}
\]
for some constant $C$ only depending on $n, q, \theta_{1}, \theta_{2}$ and therefore we only need to produce an estimate for $\int_{\Omega}\xi|Z|^{2}\phi^{-2}r^{-n+2q}\rho\,d\mathscr{L}^{n}$.
If we apply the divergence theorem in $\Omega$ to the vector field $\xi|Z|^{2}r^{2-n+2q}\nabla\rho$ and exploit the fact that (on the support of $\xi$) $\Delta\rho\geq C N^{2}r^{-2}\phi^{-2}\rho$ (for some constant $C$ which does not depend on $N$) we get
\[\int_{\Omega}\xi|Z|^{2}\phi^{-2}r^{-n+2q}\rho\,d\mathscr{L}^{n}\leq \frac{C}{N}\left[\int_{\Omega}|\xi'||Z|^{2}\phi^{-1}r^{1-n+2q}\rho\,d\mathscr{L}^{n}+\int_{\Omega}\xi|Z||\nabla Z|\phi^{-1}r^{1-n+2q}\rho\,d\mathscr{L}^{n}\right]
\]
because $\nabla r \perp \nabla \phi$ and $\int_{\partial\Omega}\xi|Z|^{2}r^{2-n+2q}\nabla\rho\cdot\nu\,d\mathscr{H}^{n-1}\leq 0$. Thus, let us notice that (by the Cauchy-Schwarz inequality)
\[\int_{\Omega}\xi|Z||\nabla Z|\phi^{-1}r^{1-n+2q}\rho\,d\mathscr{L}^{n}\leq \left(\int_{\Omega}\xi|\nabla Z|^{2}r^{2-n+2q}\rho\,d\mathscr{L}^{n}\right)^{1/2}\left(\int_{\Omega}\xi|Z|^{2}\phi^{-2}r^{-n+2q}\rho\,d\mathscr{L}^{n}\right)^{1/2}
\]
and therefore, if we set
\[ F_{W}=\int_{\Omega}\xi|Z|^{2}\phi^{-2}r^{-n+2q}\rho\,d\mathscr{L}^{n}, \ F_{L}=\int_{\Omega}|\xi'||Z|^{2}\phi^{-1}r^{1-n+2q}\rho\,d\mathscr{L}^{n}, \ F_{D}=\int_{\Omega}\xi|\nabla Z|^{2}r^{2-n+2q}\rho\,d\mathscr{L}^{n}
\] 
we have just proven an inequality of the form
\[ F_{W}\leq \frac{C}{N} F_{L}+\frac{C}{N}F^{1/2}_{W}F^{1/2}_{D}.
\]
Let us now remark that since $\xi'=0$ near the boundary $\partial\Omega$ and, by scaling arguments, $|\xi'|\leq Cr^{-1}$ on the whole $\Omega$ we can write
\[ F_{L}\leq C\int_{\Omega}|Z|^{2}r^{-n+2q}\rho\,d\mathscr{L}^{n}.
\]
In order to conclude our proof, we distinguish two cases. If $F^{1/2}_{W}\leq \frac{C}{N}F^{1/2}_{D}+F^{1/2}_{L}$ then we are done as soon as we pick $N$ large enough to absorbe the Dirichlet integral in the left-hand side of our main inequality. If instead this is not the case, and thus $F^{1/2}_{W}-\frac{C}{N}F^{1/2}_{D}\geq F_{L}^{1/2}$ we obtain (from the inequality relating $F_{W}, F_{L}, F_{D}$) that
\[ F_{W}^{1/2}F_{L}^{1/2}\leq \frac{C}{N}F_{L}
\]
which is the same as 
\[ F_{W}\leq \frac{C}{N^{2}}F_{L}
\] and this completes the proof.
\end{proof}

\subsection{Existence of critical points}

In this subsection, we capitalize the effort spent in proving the coercivity inequalities for the adjoint constraint operators by deriving the existence of critical points for the functional $\mathcal{G}$ associated to the linearized problem.

First of all, let us observe that thanks to Lemma \ref{lem:coarea} we can turn Proposition \ref{pro:basic1} into a corresponding statement in doubly weighted Sobolev spaces, namely when the angular weight $\rho$ is also taken into account. Thus, we obtain the natural counterpart of Proposition \ref{pro:basicIIup}, which instead concerns the second component of the constraints. 




The following proposition is the key to solve the linear Einstein constraint system in our setting.

\begin{proposition}\label{pro:basic}
Let $n\geq 3$ and for any set of data $(M,\cg,\ck)$ as in the statement of Theorem \ref{thm:main} let $(M,g,k)$ be the triple defined in Subsection \ref{subs:doubly}. Fix a real number $\frac{n-2}{2}<p<\check{p}$ with $p\neq n/2$ if $n\geq 5$  . There exist constants $a_{\infty,L}$ and $C$ (depending only on $g, k, \theta_{1}, \theta_{2}, p$) such that uniformly for $|a|>a_{\infty}$
\[ \left\|\left(u,Z\right)\right\|_{\mathcal{H}_{2,-n+p+2,\rho}\times \mathcal{X}_{1,-n+p+2,\rho}}\leq C \left\|d\Phi^{\ast}_{\left(g,\pi\right)}\left[u,Z\right]\right\|_{\mathcal{M}_{0,-n+p,\rho}\times \mathcal{S}_{0,-n+p+1,\rho}}
\]
for all $u\in \mathcal{H}_{2,-n+p+2,\rho}$ and $Z\in \mathcal{X}_{1,-n+p+2,\rho}$.
\end{proposition}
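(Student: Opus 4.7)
The strategy is to establish the estimate first at the trivial flat data $(\delta, 0)$, where the adjoint system decouples into a scalar and a vector part, and then to transfer it to the actual data $(g,\pi)$ by a perturbation argument that exploits the smallness of $g-\delta$ and $\pi$ on the gluing region $\Omega$ when $|a|$ is large.

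At $(\delta, 0)$ one has $\mathrm{Ric}_\delta = 0$ and every term of the form $\pi\ast\pi\ast u$, $D_g(\pi\ast Z)$, $\pi\ast u$ vanishes, so $d\Phi^*_{(\delta,0)}[u,Z] = (L^*(u),\, -\tfrac{1}{2}\mathcal{D}(Z))$ and the two components decouple. The doubly-weighted forms of Proposition \ref{pro:basic1} and Proposition \ref{pro:basicIIup} (as stated there via Lemma \ref{lem:coarea}) then directly yield
\[ \left\|(u,Z)\right\|_{\mathcal{H}_{2,-n+p+2,\rho}\times\mathcal{X}_{1,-n+p+2,\rho}} \leq C_0 \left\|d\Phi^*_{(\delta,0)}[u,Z]\right\|_{\mathcal{M}_{0,-n+p,\rho}\times\mathcal{S}_{0,-n+p+1,\rho}} \]
with $C_0 = C_0(n,p,\theta_1,\theta_2)$.

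For general $(g,\pi)$, I would write $d\Phi^*_{(g,\pi)}[u,Z] = d\Phi^*_{(\delta,0)}[u,Z] + \mathcal{E}[u,Z]$, where the error operator $\mathcal{E}$ organizes itself as a sum of (i) the differences $\mathrm{Hess}_g - \mathrm{Hess}_\delta$ and $(\Delta_g - \Delta_\delta)$ applied to $u$, producing schematically $(g-\delta)\ast\partial^2 u$ and $\Gamma_g\ast\partial u$; (ii) the Ricci contribution $-u\,\mathrm{Ric}_g$; (iii) the coupling terms $\pi\ast\pi\ast u$, $D_g(\pi\ast Z)$, $\pi\ast u$; and (iv) the first-order contribution $(\mathscr{L}_Z g - \mathscr{L}_Z \delta) = \Gamma_g\ast Z$ in the second component. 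By the construction of the rough patch $(g,\pi)$ in Subsection \ref{subs:doubly}, all coefficients inherit the asymptotic decay of the initial data, so that on $\Omega$
\[ |g-\delta|=C^s_{(-\check{p})},\ \ |\partial g|=C^s_{(-\check{p}-1)},\ \ |\mathrm{Ric}_g|=C^s_{(-\check{p}-2)},\ \ |\pi|=C^s_{(-\check{p}-1)},\ \ |\partial\pi|=C^s_{(-\check{p}-2)}. \]
The key geometric inputs are \eqref{eq:min} and \eqref{eq:equiv}: uniformly on $\Omega$ we have $s(x)\geq c|a|$ and $r(x)/s(x)\leq C$, so every factor $s^{-\check{p}-j}$ is pointwise bounded by $C|a|^{-\check{p}-j}$ while the ratios $(r/s)^j$ stay universally bounded.

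With these inputs every term of $\mathcal{E}$ is estimated in the same way. For instance, for the $(g-\delta)\ast\partial^2 u$ contribution to the first component of $\mathcal{E}$, whose target norm $\mathcal{M}_{0,-n+p,\rho}$ has density $r^{n-2p}\rho$, one finds
\[ \int_\Omega |g-\delta|^2|\partial^2 u|^2 r^{n-2p}\rho\,d\mathscr{L}^n \leq C|a|^{-2\check{p}}\int_\Omega |\partial^2 u|^2 r^{n-2p}\rho\,d\mathscr{L}^n \leq C|a|^{-2\check{p}}\|u\|^2_{\mathcal{H}_{2,-n+p+2,\rho}}. \]
In every other contribution the ratio between the target density and the natural density for the derivative of $u$ or $Z$ involved takes the form $s^{-2\check{p}-2j}r^{2j} = (r/s)^{2j} s^{-2\check{p}} \leq C|a|^{-2\check{p}}$ on $\Omega$, so summing we obtain
\[ \left\|\mathcal{E}[u,Z]\right\|_{\mathcal{M}_{0,-n+p,\rho}\times\mathcal{S}_{0,-n+p+1,\rho}} \leq C_1 |a|^{-\check{p}} \left\|(u,Z)\right\|_{\mathcal{H}_{2,-n+p+2,\rho}\times\mathcal{X}_{1,-n+p+2,\rho}}. \]
Combining this with the flat estimate via the triangle inequality and choosing $a_{\infty,L}$ so that $C_0 C_1 |a|^{-\check{p}} \leq 1/2$ for $|a|\geq a_{\infty,L}$, the error is absorbed into the left-hand side and the proof is complete.

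The main obstacle is the systematic weight bookkeeping for the coupling terms $D_g(\pi\ast Z)$ and $\pi\ast u$ that mix the two components of the adjoint system, since these must be controlled simultaneously in both target norms. The verification, however, always has the same structure: using $r\leq C s$ from \eqref{eq:equiv} to trade powers of $r$ for powers of $s$, the uniform bound $s\geq c|a|$ then yields the required gain $|a|^{-\check{p}}$, with no interaction between the choice of $p$ and $\check{p}$ beyond the constraint $p<\check{p}$ already imposed.
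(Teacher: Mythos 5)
Your proposal is correct and follows essentially the same route as the paper: the flat-data estimate obtained by decoupling into the Basic Estimates I and II, followed by a perturbation/absorption argument for large $|a|$ (the paper simply invokes a ``standard perturbation argument'' in singly weighted spaces and then passes to the $\rho$-weighted norms via Lemma \ref{lem:coarea}, whereas you carry out the weight bookkeeping directly in the doubly weighted spaces). One harmless caveat: the rough-patch coefficients also contain cutoff-derivative terms such as $\partial\chi\ast(\check{g}-\delta)\lesssim r^{-1}s^{-\check{p}}$, which are not bounded by $C\,s^{-\check{p}-1}$ uniformly in $a$, but your estimate survives unchanged because what enters is the product with the weight ratio, and $r\,|\partial\chi|\lesssim 1$ together with $s\geq c|a|$ on $\Omega$ still yields the gain $|a|^{-\check{p}}$.
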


\begin{proof}
The Basic Estimate I (Proposition \ref{pro:basic1}) and II (Proposition \ref{pro:basicIIup}) ensure the existence of a constant $C>0$ (that can be chosen uniformly for all suffciently large $|a|$) such that
	\[
	\left\|\left(u,Z\right)\right\|_{\mathcal{H}_{2,-n+p+2,\rho}\times \mathcal{X}_{1,-n+p+2,\rho}}\leq  C \left\|d\Phi^{\ast}_{\left(\delta,0\right)}\left[u,Z\right]\right\|_{\mathcal{M}_{0,-n+p,\rho}\times \mathcal{S}_{0,-n+p+1,\rho}}.
	\]
	
On the other hand, given the decay assumptions on $(\hat{g},\hat{k})$ (hence on $(g,k)$) it follows from a standard perturbation argument that for any $\varepsilon>0$ we can find $a_{\infty,L}=a_{\infty,L}(\varepsilon)$ such that
\[
|a|>a_{\infty,L} \ \Rightarrow \ 
\left\|d\Phi^{\ast}_{(g,\pi)}[u,Z]-d\Phi^{\ast}_{\left(\delta,0\right)}\left[u,Z\right]\right\|_{\mathcal{M}_{0,-n+p}\times \mathcal{S}_{0,-n+p+1}}\leq\varepsilon \left\|\left(u,Z\right)\right\|_{\mathcal{H}_{2,-n+p+2}\times \mathcal{X}_{1,-n+p+2}}
\]
and hence, by virtue of Lemma \ref{lem:coarea}
\[
\left\|d\Phi^{\ast}_{\left(g,\pi\right)}\left[u,Z\right]-d\Phi^{\ast}_{\left(\delta,0\right)}\left[u,Z\right]\right\|_{\mathcal{M}_{0,-n+p,\rho}\times \mathcal{S}_{0,-n+p+1,\rho}}\leq\varepsilon \left\|\left(u,Z\right)\right\|_{\mathcal{H}_{2,-n+p+2,\rho}\times \mathcal{X}_{1,-n+p+2,\rho}}.
\]	Thus, picking $\varepsilon_0=C/3$ we have that for all $|a|>a_{\infty,L}(\varepsilon_0)$ both previous inequalities are true, and therefore the triangle inequality ensures that
\[
\left\|\left(u,Z\right)\right\|_{\mathcal{H}_{2,-n+p+2,\rho}\times \mathcal{X}_{1,-n+p+2,\rho}}\leq \frac{2C}{3} \left\|d\Phi^{\ast}_{\left(g,\pi\right)}\left[u,Z\right]\right\|_{\mathcal{M}_{0,-n+p,\rho}\times \mathcal{S}_{0,-n+p+1,\rho}}
\]
which is what we wanted.
\end{proof}	

At this stage, we can use a direct method to find a (unique) global minimum for $\mathcal{G}$.

\begin{proposition}\label{pro:existence}
Let $n\geq 3$, let $\frac{n-2}{2}<p<\check{p}$ with $p\neq n/2$ if $n\geq 5$ and assume that the vertex $a$ satisfies the inequality $|a|>a_{\infty,L}$. For any $(f, V)\in \mathcal{H}_{0, -p-2,\rho^{-1}}\times \mathcal{X}_{0,-p-2,\rho^{-1}}$ there exists a unique $(\tu,\tZ)\in \mathcal{H}_{2,-n+p+2,\rho}\times \mathcal{X}_{1, -n+p+2,\rho}$ which minimizes the functional $\mathcal{G}$ on the Hilbert space $\mathcal{H}_{2,-n+p+2,\rho}\times \mathcal{X}_{1, -n+p+2,\rho}$.
\end{proposition}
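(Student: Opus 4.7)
The plan is to apply the direct method of the calculus of variations, which in this Hilbert setting reduces essentially to checking that $\mathcal{G}$ is the sum of a coercive continuous quadratic form and a bounded linear functional. Writing $\mathcal{G}(u,Z)=\frac{1}{2}Q(u,Z)-\ell(u,Z)$ where
\[
Q(u,Z)=\int_{\Omega}\Bigl[|{d\Phi^{\ast}}^{(1)}_{(g,\pi)}[u,Z]|^{2}r^{n-2p}\rho+|{d\Phi^{\ast}}^{(2)}_{(g,\pi)}[u,Z]|^{2}r^{n-2p-2}\rho\Bigr]\,d\upsilon
\]
and $\ell(u,Z)=\int_{\Omega}(f,V)\cdot_{g}(u,Z)\,d\upsilon$, I would first observe that the very definition of the doubly weighted norms gives
\[
Q(u,Z)=\|{d\Phi^{\ast}}^{(1)}_{(g,\pi)}[u,Z]\|^{2}_{\mathcal{M}_{0,-n+p,\rho}}+\|{d\Phi^{\ast}}^{(2)}_{(g,\pi)}[u,Z]\|^{2}_{\mathcal{S}_{0,-n+p+1,\rho}},
\]
and therefore, by Proposition \ref{pro:basic}, for $|a|>a_{\infty,L}$ one has $Q(u,Z)\geq C^{-2}\|(u,Z)\|^{2}_{\mathcal{H}_{2,-n+p+2,\rho}\times\mathcal{X}_{1,-n+p+2,\rho}}$.

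The second step is to bound the linear term $\ell$. Using the weighted Cauchy--Schwarz pairing
\[
\Bigl|\int_{\Omega}fu\,d\upsilon\Bigr|=\Bigl|\int_{\Omega}(fr^{-(n-2(p+2))/2}\rho^{-1/2})(ur^{(n-2(p+2))/2}\rho^{1/2})\,d\upsilon\Bigr|\leq \|f\|_{0,-p-2,\rho^{-1}}\|u\|_{0,-n+p+2,\rho}
\]
(and its analog for $V,Z$), together with the continuous embeddings $\mathcal{H}_{2,-n+p+2,\rho}\hookrightarrow \mathcal{H}_{0,-n+p+2,\rho}$ and $\mathcal{X}_{1,-n+p+2,\rho}\hookrightarrow\mathcal{X}_{0,-n+p+2,\rho}$, one obtains $|\ell(u,Z)|\leq \|(f,V)\|_{\ast}\|(u,Z)\|_{\mathcal{H}_{2,-n+p+2,\rho}\times\mathcal{X}_{1,-n+p+2,\rho}}$. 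Combined with the coercivity of $Q$, this gives the lower bound
\[
\mathcal{G}(u,Z)\geq \frac{1}{2C^{2}}\|(u,Z)\|^{2}-\|(f,V)\|_{\ast}\|(u,Z)\|,
\]
so $\mathcal{G}$ is bounded below and $\|(u,Z)\|\to\infty$ forces $\mathcal{G}(u,Z)\to+\infty$.

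With coercivity in hand, I would invoke the standard direct method. Let $(u_{j},Z_{j})$ be a minimizing sequence; by coercivity it is bounded in the Hilbert space $\mathcal{H}_{2,-n+p+2,\rho}\times\mathcal{X}_{1,-n+p+2,\rho}$, hence up to subsequence it converges weakly to some $(\tu,\tZ)$. Since $Q$ is a nonnegative continuous quadratic form on a Hilbert space, it is weakly lower semi-continuous; the linear term $\ell$ is weakly continuous; therefore $\mathcal{G}$ is weakly lower semi-continuous and $(\tu,\tZ)$ achieves the infimum.

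Uniqueness follows from strict convexity: the coercivity estimate $Q(u,Z)\geq c\|(u,Z)\|^{2}$ in particular implies that $Q$ is positive definite, so $\mathcal{G}$ is strictly convex and its minimizer is unique. The only nontrivial ingredient in all of this is the coercivity of $Q$, which has already been established in Proposition \ref{pro:basic}; everything else is routine Hilbert-space functional analysis, and I do not expect any step to present a real obstacle.
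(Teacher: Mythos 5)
Your proposal is correct and follows essentially the same route as the paper: coercivity of the quadratic form via Proposition \ref{pro:basic}, a weighted Cauchy--Schwarz bound on the linear term, the direct method with a weakly convergent minimizing sequence and weak lower semicontinuity, and uniqueness from strict convexity (the paper phrases this via the midpoint identity plus the basic estimate, which is the same positive-definiteness argument you give). No gaps.
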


\begin{rem}
As the reader may have noticed, we did not mention, in the statement of Theorem \ref{thm:main}, the exceptional value $p^{\ast}=p^{\ast}(n)=n/2$ related to the restrictions of the basic estimates for dimension $n\geq 5$. Indeed, we claim that this is not an issue when performing the gluing. The reason is very simple: given data $(M,\cg,\ck)$ with decay $\check{p}$ (in the usual sense) if one wanted to prove Theorem \ref{thm:main} for $p=p^{\ast}$ then by simply performing our whole construction with weight $\frac{p^{\ast}+\check{p}}{2}$ (which is larger than $p^{\ast}$, hence certainly not exceptional) one would produce a triple $(M,\hat{g},\hat{k})$ which does solve the Einstein constraint equation and whose decay is actually \textsl{faster} than the initial requirement (thereby satisying the conclusion of Theorem \ref{thm:main} for $p=p^{\ast}$). In other words (and more generally) for fixed $\check{p}$ the validity of Theorem \ref{thm:main} for some $p''<\check{p}$ implies the validity of the same assertion for all $p'\in(\frac{n-2}{2},p'')$.
\end{rem}

\begin{proof} The argument follows the direct method of the Calculus of Variations. Indeed, the functional $\mathcal{G}$ is bounded from below on $\mathcal{H}_{2,-n+p+2,\rho}\times \mathcal{H}_{1, -n+q+1,\rho}$ for its very definition implies
\[
\begin{split}
\mathcal{G}(u, Z)\geq C_1\left\|d\Phi^{\ast}_{\left(g,\pi\right)}[u,Z]\right\|^{2}_{\mathcal{M}_{0,-n+p,\rho}\times \mathcal{S}_{0,-n+p+1,\rho}}-C_2\left\|(f,V)\right\|_{\mathcal{H}_{0,-p-2,\rho^{-1}}\times \mathcal{X}_{0,-p-2,\rho^{-1}}}\left\|\left(u,Z\right)\right\|_{\mathcal{H}_{0,-n+p+2,\rho}\times \mathcal{X}_{0,-n+p+2,\rho}} 
\end{split}
\]
and hence, thanks to the basic estimate (in the form of Proposition \ref{pro:basic})
\[
\mathcal{G}(u,Z)\geq C_1 \left\|\left(u,Z\right)\right\|^{2}_{\mathcal{H}_{2,-n+p+2,\rho}\times \mathcal{X}_{1,-n+p+2,\rho}}-C_2\left\|(f,V)\right\|_{\mathcal{H}_{0,-p-2,\rho^{-1}}\times \mathcal{X}_{0,-p-2,\rho^{-1}}}\left\|\left(u,Z\right)\right\|_{\mathcal{H}_{2,-n+p+2,\rho}\times \mathcal{X}_{1,-n+p+2,\rho}}
\]
which immediately implies that $\mathcal{G}\left(\cdot,\cdot\cdot\right)$ is coercive and thus the claim follows.
As a result, we can pick a minimizing sequence $(u_{i}, Z_{i})_{i\in\mathbb{N}}$ which is bounded in $\mathcal{H}_{2,-n+p+2,\rho}\times \mathcal{X}_{1, -n+p+2,\rho}$ (in fact, the previous estimate shows that \textsl{any} minimizing sequence has to be bounded): by Banach-Alaoglu there will be a subsequence which weakly converges to a limit point $(\tu,\tZ)$ and by (weak) lower semicontinuity of the functional we conclude that
\[\mathcal{G}(\tu,\tZ)\leq \liminf_{i\to\infty}\mathcal{G}(u_{i},Z_{i}).
\]
This precisely means that $(\tu,\tZ)$ minimizes the value of $\mathcal{G}$. Finally, the uniqueness statement follows by strict convexity of the functional: indeed, if we had two minima $(u_{1}, Z_{1})$ and $(u_{2}, Z_{2})$ then because of the identity
\[ \mathcal{G}\left(\frac{\left(u_{1}, Z_{1}\right)+\left(u_{2},Z_{2}\right)}{2}\right) =\frac{1}{2}\mathcal{G}\left(u_{1}, Z_{1}\right)+\frac{1}{2}\mathcal{G}\left(u_{2}, Z_{2}\right)-\frac{1}{8}\left\|d\Phi^{\ast}_{\left(g,\pi\right)}\left[u_{2}-u_{1}, Z_{2}-Z_{1}\right]\right\|^{2}_{\mathcal{M}_{0,-n+p+2,\rho}\times \mathcal{S}_{0,-n+p+2,\rho}}
\]
we would reach a contradiction unless $d\Phi^{\ast}_{(g,\pi)}[u_{2}-u_{1}, Z_{2}-Z_{1}]=0$ and by the basic estimate this forces $u_{1}=u_{2}$ as well as $Z_{1}=Z_{2}$ which is what we had to prove.
\end{proof}

\section{The Picard scheme}\label{sec:nonlin}

\subsection{Iterative solution}

In this subsection, we define Banach spaces $X_{1}, X_{2}$ so that the solution operator associated to the linearized problem is in fact a bounded operator $S:X_{1}\to X_{2}$. As a result, we will solve the nonlinear problem iteratively, by following a Picard-type scheme.

Given data $(f^{\ast},V^{\ast})$ where $f^{\ast}$ is a (scalar) function and $V^{\ast}$ is a vector field, we want to find $(\hat{g},\hat{\pi})$ satisfying $\Phi(\hat{g},\hat{\pi})=(f^{\ast},V^{\ast})$. This problem can be more conveniently written as
\[ \Phi(g_{0},\pi_{0})+d\Phi_{(g_{0},\pi_{0})}[h,\omega]+Q_{(g_{0},\pi_{0})}[h,\omega]=(f^{\ast},V^{\ast})
\]
with $(g_{0},\pi_{0})=(g,\pi)$ the data we start with (in our case, they are gotten by rough patching in $\Omega$ of the initial data we are given and the trivial data $(\delta,0)$).
The previous equation takes the form
\[d\Phi_{(g_{0},\pi_{0})}[h,\omega]+Q_{(g_{0},\pi_{0})}[h,\omega]=(f^{\ast},V^{\ast})-\Phi(g_{0},\pi_{0})
\]
and we claim that, in order to solve it, it is sufficient to prove that the quadratic error term decays in the iterative scheme. Throughout this section, we let $\left\|\cdot\right\|_{1}$ denote the norm on the Banach space $X_{1}$ and $\left\|\cdot\right\|_{2}$ denote the norm on the Banach space $X_{2}$ (to be defined in the sequel, based on the form of the local elliptic estimates we have for critical points of the functional $\mathcal{G}$). Their explicit expression is given in equation \eqref{norm1} and \eqref{norm2}, respectively.

\begin{proposition} \label{pro:quadratic} Given any $\lambda>0$, there exists $r_0>0$ sufficiently small so that 
if $\|\left(f_1,V_{1}\right)\|_1<r_0$ and $\|\left(f_2,V_{2}\right)\|_1<r_0$ and we let $\left(h_1,\omega_{1}\right)=S(f_1,V_{1})$, $\left(h_2,\omega_{2}\right)=S(f_2,V_{2})$ then we have
\[ \|Q_{(g,\pi)}\left[h_1,\omega_{1}\right]-Q_{(g,\pi)}\left[h_2,\omega_{2}\right]\|_1\leq \lambda \|\left(h_1,\omega_{1}\right)-\left(h_2,\omega_{2}\right)\|_2.
\] 
\end{proposition}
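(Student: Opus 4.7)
The natural approach is to view $Q_{(g,\pi)}$ as the quadratic-and-higher remainder in the Taylor expansion of $\Phi$ about $(g,\pi)$, and to exploit the fundamental theorem of calculus along the segment $(h_t, \omega_t) := (1-t)(h_2, \omega_2) + t(h_1, \omega_1)$ to express
\[
Q_{(g,\pi)}[h_1, \omega_1] - Q_{(g,\pi)}[h_2, \omega_2] = \int_0^1 \bigl( d\Phi_{(g+h_t,\, \pi+\omega_t)} - d\Phi_{(g,\pi)} \bigr)[h_1 - h_2,\, \omega_1 - \omega_2]\, dt.
\]
By construction the integrand vanishes when $(h_t, \omega_t) = (0,0)$, so it is a finite sum of terms schematically of the form $A(h_t, \omega_t, D h_t, D\omega_t, D^2 h_t) \ast (h_1 - h_2, \omega_1 - \omega_2, D(\cdot), D^2(\cdot))$, where $A$ depends smoothly on its arguments and vanishes at the origin. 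The difference is thus bilinear in $(h_t, \omega_t)$-type factors and $(h_1 - h_2, \omega_1 - \omega_2)$-type factors, with each quadratic building block containing at least one factor that carries a smallness coming from $r_0$.

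Next I would upgrade the $X_2$ control on $(h_i, \omega_i)$ to pointwise weighted bounds by invoking interior Schauder estimates in the doubly weighted Hölder spaces $\mathcal{C}^{k,\alpha}_{l,m}(\Omega)$ from Subsection \ref{subs:doubly}. This step is essential because $X_2$ only controls $h$ in weighted $L^2$ up to two derivatives and $\omega$ up to one derivative, whereas $Q^{(1)}$ contains genuinely quadratic expressions in $D h$ (coming from the nonlinear piece of $R_{g+h}$). The Schauder conversion turns the $L^2$-type control of $(h_t, D h_t, D^2 h_t, \omega_t, D \omega_t)$ into the weighted sup-norm control that allows each product term to be estimated in $X_1$ by putting one factor in pointwise norm and the other in $X_2$. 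The decay counting works because, for the radial weights, products of two factors decaying (in the radial sense) like $r^{-p}$ and $r^{-p-1}$ give $r^{-(p+2)} \cdot r^{-(p-1)}$, with a spare factor of $r^{-(p-1)}$ absorbed by any pointwise smallness of $h_t$ and $\omega_t$; similarly for the other combinations arising in $Q$.

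Combining this with the boundedness of the solution operator $S : X_1 \to X_2$ produced by Propositions \ref{pro:basic} and \ref{pro:existence}, which gives $\|(h_i, \omega_i)\|_2 \leq C \|(f_i, V_i)\|_1 < C r_0$, one obtains an estimate of the schematic form
\[
\|Q_{(g,\pi)}[h_1, \omega_1] - Q_{(g,\pi)}[h_2, \omega_2]\|_1 \leq C' \bigl( \|(h_1, \omega_1)\|_2 + \|(h_2, \omega_2)\|_2 \bigr)\, \|(h_1 - h_2, \omega_1 - \omega_2)\|_2 \leq 2 C' C r_0\, \|(h_1 - h_2, \omega_1 - \omega_2)\|_2,
\]
and fixing $r_0 < \lambda/(2 C' C)$ concludes the argument.

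The main obstacle, as I see it, is the bookkeeping of the \emph{angular} weight $\rho = \phi^{2N}$ in the product estimates. The data norm on $X_1$ uses $\rho^{-1}$ while the solution norm on $X_2$ uses $\rho$; a bilinear product of two $X_2$-factors naturally carries weight $\rho^2$, so to land in $X_1$ with weight $\rho^{-1}$ one gains $\rho^3$, much more than needed at the interior but requiring careful verification that the Schauder estimates converting $L^2_\rho$-bounds into pointwise bounds behave correctly near $\partial \Omega$, where the coefficient $\phi^{-m}$ built into the norms $\|\cdot\|^{(l,m)}_{k,\alpha}$ of Subsection \ref{subs:doubly} is designed to keep the accounting consistent. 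The remaining technical point is that derivatives of $h$ which would naively require a pointwise bound on $D^2 h$ are in fact paired, via the structure of $Q$, with $L^2$-type factors so that at most first-order pointwise derivatives are ever required.
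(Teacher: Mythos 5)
Your overall strategy coincides with the paper's: convert the smallness $\|(f_a,V_a)\|_1<r_0$ into pointwise weighted bounds on $(h_a,\omega_a)$ up to second derivatives via the Schauder estimates of Lemma \ref{pwsch}, exploit the bilinear structure of the difference of the quadratic remainders, and choose $r_0\sim\lambda$ at the end. Your representation
\[
Q_{(g,\pi)}[h_1,\omega_1]-Q_{(g,\pi)}[h_2,\omega_2]=\int_0^1\bigl(d\Phi_{(g+h_t,\pi+\omega_t)}-d\Phi_{(g,\pi)}\bigr)[h_1-h_2,\omega_1-\omega_2]\,dt
\]
is a legitimate and somewhat tidier substitute for the paper's explicit computation of the quadratic parts (the terms $E_1,\dots,E_{10}$), but it does not change the analytic work: one still has to estimate the same products term by term in the doubly weighted norms.

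There are, however, concrete gaps and misstatements. First, you misdescribe $X_2$: by \eqref{norm2} it consists of a zeroth-order weighted $\mathcal{L}^2$ piece plus weighted $\mathcal{C}^{2,\alpha}$ H\"older norms of \emph{both} $h$ and $\omega$, so no further "Schauder upgrade" of $(h_a,\omega_a)$ is needed inside this proof — the bounds $|\partial^j h_a|\leq Cr_0r^{-p-j}\phi^{N-n/2-2-j}$, $0\leq j\leq 2$, follow directly from Lemma \ref{pwsch} and $\|(f_a,V_a)\|_1<r_0$. Second, your closing claim that "at most first-order pointwise derivatives are ever required" is false: $Q^{(1)}$ contains the term $E_1(h)(\partial^2h)$, and since \eqref{norm1} demands a weighted $\mathcal{C}^{1,\alpha}$ bound on the second component, one must differentiate $Q^{(2)}$ once and control $\partial^2h$ and $\partial\omega$ pointwise and in H\"older seminorm; this is precisely why $X_2$ carries $\mathcal{C}^{2,\alpha}$ norms. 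Third, and most importantly, your argument only produces sup-norm/product and $\mathcal{L}^2_{\rho^{-1}}$ estimates, whereas $\|\cdot\|_1$ also contains weighted H\"older \emph{seminorms} ($\mathcal{C}^{0,\alpha}$ for the scalar component, $\mathcal{C}^{1,\alpha}$ for the vector component); bounding these, term by term on balls $B_{d(x)/2}(x)$ via the Leibniz-type inequality \eqref{eq:leibhol}, is where the bulk of the paper's proof lies, and it is not addressed in your plan. Finally, the angular bookkeeping is off: the Sobolev pieces of both $X_1$ and $X_2$ are weighted by $\rho^{-1}$, and the pointwise bounds carry $\phi^{N-n/2-2}$ rather than $\rho=\phi^{2N}$, so the correct statement is the paper's: the surplus angular powers close the estimates once $N$ is taken large (e.g.\ $N>n+8$), not a gain of "$\rho^3$".
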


Once this is proven, the conclusion (in terms of existence, boundary regularity of the gluing and decay at infinity) follows at once:

\begin{theorem} \label{picard} Given $(f,V)\in X_1$ sufficiently small, there is a small
$(h,\omega)\in X_2$ satisfying 
\[ d\Phi_{(g_{0},\pi_{0})}[h,\omega]+Q_{(g_{0},\pi_{0})}[h,\omega]=(f,V).
\]
\end{theorem}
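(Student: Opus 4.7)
The plan is to solve the nonlinear equation via a Picard iteration built on the linear solution operator $S\colon X_1\to X_2$ supplied by Proposition \ref{pro:existence} (which sends $(f,V)$ to the pair $(h,\omega)$ obtained from the minimizer $(\tu,\tZ)$ of $\mathcal{G}$ via the Euler--Lagrange identification in Subsection \ref{subs:varframe}). I would introduce the nonlinear map $T\colon X_2\to X_2$ by
\[
T(h,\omega) := S\bigl((f,V) - Q_{(g_0,\pi_0)}[h,\omega]\bigr),
\]
so that fixed points of $T$ are exactly the solutions of $d\Phi_{(g_0,\pi_0)}[h,\omega]+Q_{(g_0,\pi_0)}[h,\omega]=(f,V)$. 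Initializing at $(h_0,\omega_0)=(0,0)$ and using that $Q_{(g_0,\pi_0)}$ is the quadratic remainder in the Taylor expansion of $\Phi$ at $(g_0,\pi_0)$, we have $Q_{(g_0,\pi_0)}[0,0]=0$, so $(h_1,\omega_1)=S(f,V)$ obeys $\|(h_1,\omega_1)\|_2 \le C_S\|(f,V)\|_1$, where $C_S$ is the operator norm of $S$ (finite by the basic estimate of Proposition \ref{pro:basic}).

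Next I would verify that $T$ is a strict contraction on a closed ball $B_\delta\subset X_2$ for appropriate $\delta>0$ and sufficiently small $\|(f,V)\|_1$. The key input is Proposition \ref{pro:quadratic}, applied with $\lambda := 1/(2C_S)$: for data of $X_1$-norm below the corresponding threshold $r_0$, applying $S$ to both sides of its conclusion gives
\[
\|T(h_1,\omega_1)-T(h_2,\omega_2)\|_2 \le C_S\lambda\,\|(h_1,\omega_1)-(h_2,\omega_2)\|_2 = \tfrac{1}{2}\|(h_1,\omega_1)-(h_2,\omega_2)\|_2.
\]
Prescribing $\delta$ and $\|(f,V)\|_1$ jointly small enough that $(f,V)-Q_{(g_0,\pi_0)}[h,\omega]$ has $X_1$-norm below $r_0$ for every $(h,\omega)\in B_\delta$, I would then show inductively that $T$ maps $B_\delta$ into itself: the telescoping bound $\|(h_{i+1},\omega_{i+1})-(h_i,\omega_i)\|_2 \le 2^{-(i-1)}\|(h_1,\omega_1)\|_2$ makes the sequence Cauchy in $X_2$ with limit of norm at most $2C_S\|(f,V)\|_1\le\delta$. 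The Banach fixed point theorem then produces the desired small fixed point $(h,\omega)\in X_2$, and uniqueness in $B_\delta$ is automatic.

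The main obstacle will be the self-consistency of the smallness regime: Proposition \ref{pro:quadratic} phrases its contraction estimate only for pairs $(h_j,\omega_j)$ that are themselves $S$-images of $X_1$-inputs below the threshold $r_0$, so at each step one must guarantee that the feed $(f,V)-Q_{(g_0,\pi_0)}[h_i,\omega_i]$ stays admissible. This requires an auxiliary \emph{quadratic at zero} bound of the shape $\|Q_{(g_0,\pi_0)}[h,\omega]\|_1 \lesssim \|(h,\omega)\|_2^2$, which is a by-product of the proof of Proposition \ref{pro:quadratic} (take $(h_2,\omega_2)=(0,0)$ and use $Q_{(g_0,\pi_0)}[0,0]=0$), together with a careful choice of the radii $r_0$, $\delta$ and of the smallness threshold on $\|(f,V)\|_1$. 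Once these are fixed consistently, the remainder of the argument is the standard contraction mapping principle, and the decay and boundary regularity claimed for $(\hat g,\hat\pi)=(g_0+h,\pi_0+\omega)$ are encoded in the definition of $X_2$ via the doubly weighted H\"older norms of Subsection \ref{subs:doubly}.
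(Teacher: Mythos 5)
Your proposal is correct and takes essentially the same route as the paper: the paper runs exactly the Picard iteration $(h_i,\omega_i)=S\bigl((f,V)-Q_{(g,\pi)}[h_{i-1},\omega_{i-1}]\bigr)$ starting from zero, uses Proposition \ref{pro:quadratic} together with the boundedness of $S$ to get the factor-$\tfrac{1}{2}$ telescoping bound, and handles the admissibility issue you flag by inductively verifying $\|(f_i,V_i)\|_1<r_0$ (choosing $\delta_0=r_0/4$), so that all iterates remain $S$-images of admissible data and the sequence is Cauchy in $X_2$. The only cosmetic difference is that the paper keeps the smallness bookkeeping on the $X_1$-norms of the data sequence and concludes convergence directly, rather than packaging the argument as a Banach fixed point on a ball $B_\delta\subset X_2$, which, as you yourself note, would otherwise require the Lipschitz estimate of Proposition \ref{pro:quadratic} beyond $S$-images.
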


\begin{proof} Assume that $\|\left(f,V\right)\|_{1}<\delta_0$ (with $\delta_{0}$ a small constant to be fixed later in the proof), let $h_0=0, \omega_{0}=0$ and $f_0=0, V_{0}=0$, and we
inductively construct sequences $\left(f_i, V_{i}\right)$ and $(h_i,\omega_{i})$ for $i\geq 1$ such that
\[ d\Phi_{(g,\pi)}\left[h_i,\omega_{i}\right]=\left(f_i, V_{i}\right)\ \mbox{where}\ \left(f_i,V_{i}\right)=-Q_{(g,\pi)}\left[h_{i-1},\omega_{i-1}\right]+\left(f, V\right).
\] 
For $i\geq 1$ we have
\[ d\Phi_{(g,\pi)}([h_{i+1},\omega_{i+1}]-[h_i,\omega_{i}])=(f_{i+1},V_{i+1})-(f_{i},V_{i})=Q_{(g,\pi)}[h_{i-1},\omega_{i-1}]-Q_{(g,\pi)}[h_i,\omega_{i}],
\]
and so by Proposition \ref{pro:quadratic} 
\[\begin{split} \|(f_{i+1},V_{i+1})-(f_i,V_{i})\|_1=\|Q_{(g,\pi)}[h_i,\omega_{i}]-Q_{(g,\pi)}[h_{i-1},\omega_{i-1}]\|_1\leq \lambda\|(h_i,\omega_{i})-(h_{i-1},\omega_{i-1})\|_2 \\ \leq C\lambda\|(f_i,V_{i})-(f_{i-1},V_{i-1})\|_1
\end{split}
\]
where $\lambda$ can be chosen as small as we wish and $C$ is the continuity constant of the solution operator $S$. Let then $r_0$ be small enough so that
$C\lambda<1/2$ in Proposition \ref{pro:quadratic}. We may then iterate this scheme provided that
$\|(f_i,V_{i})\|_1\leq r_0$ for $i=1,\ldots, k$ and in that case we obtain 
\[ \|(f_{k+1},V_{k+1})-(f_k,V_{k})\|_1\leq 2^{-k}\|(f_1,V_{1})-(f_0,V_{0})\|_1=2^{-k}\|(f,V)\|_1< 2^{-k}\delta_0.
\]
From the triangle inequality we then have for any $k$
\[ \|(f_{k+1},V_{k+1})-(f,V)\|_1\leq \sum_{i=1}^{k}2^{-i}\delta_0<2\delta_0,
\]
so if we choose $\delta_0=r_0/4$ we have
\[ \|(f_{k+1},V_{k+1})\|_1\leq \|(f_{k+1},V_{k+1})-(f,V)\|_1+\|(f,V)\|_1<3\delta_0<r_0
\]
for each $k$. We can then iterate indefinitely and the sequence $\{(f_i,V_{i})\}$ is Cauchy
as is $\{(h_i,\omega_{i})\}$ since $S$ is a bounded operator. As a consequence, the sequence $\{(h_i,\omega_{i})\}$ converges in $X_{2}$ to a limit $(h,\omega)$ which satisfies
the equation $d\Phi_{(g,\pi)}[h,\omega]+Q_{(g,\pi)}[h,\omega]=(f,V)$. This completes the proof.
\end{proof}

\begin{remark}\label{rem:small}
We shall explicitly observe that given $(\cg,\ck)$ as in the statement of Theorem \ref{thm:main} and performed the rough patch construction as described in Subsection \ref{subs:doubly}, then 
\[
\lim_{|a|\to\infty}\left\|(\Phi^{(1)}(g,\pi),\Phi^{(2)}(g,\pi))\right\|_1=0
\]
provided the angular cut-off function has sufficiently rapid decay at the boundary of the gluing region $\partial\Omega$. This implies that Theorem \ref{picard} can be legitimately applied (namely: the iteration scheme can indeed be started) and, furthermore, this ensures that the corresponding solution of the non-linear problem $(\hat{g},\hat{k})$ will have small norm in the Banach space $X_2$, in fact as small as we wish provided we pick $|a|$ large enough.
\end{remark}

Therefore, the rest of this section is devoted to the proof of Proposition \ref{pro:quadratic}.

\subsection{Integral estimates}

In this section, we take care of the Sobolev part of the norms defining the Banach spaces $X_{1}$ and $X_{2}$. Given data $(f,V)\in X_{1}$, recall that we have let $(h,\omega)=S(f,V)\in X_{2}$ be the solution of the linearized constraints defined by Proposition \ref{pro:existence}.

\begin{lem}\label{lem:zero} The following bound holds:
\[ \left\|\left(h,\omega\right)\right\|_{\mathcal{M}_{0,-p,\rho^{-1}}\times \mathcal{S}_{0,-p-1,\rho^{-1}}}\leq C\left\|\left(f,V\right)\right\|_{\mathcal{H}_{0,-p-2,\rho^{-1}}\times \mathcal{X}_{0,-p-2,\rho^{-1}}}. \]
\end{lem}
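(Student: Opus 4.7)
The plan is to exploit the variational characterization of $(h,\omega)$ as an energy identity and then close up with the Basic Estimate. Recall that $(u,Z)$ is the minimizer of $\mathcal{G}$ produced by Proposition \ref{pro:existence}, and that by construction
\[ h=r^{n-2p}\rho\,{d\Phi^{\ast}}^{(1)}_{(g,\pi)}[u,Z],\qquad \omega=r^{n-2p-2}\rho\,{d\Phi^{\ast}}^{(2)}_{(g,\pi)}[u,Z]. \]
A direct computation with the definition of the doubly weighted norms shows the algebraic identity
\[ \|(h,\omega)\|_{\mathcal{M}_{0,-p,\rho^{-1}}\times\mathcal{S}_{0,-p-1,\rho^{-1}}}^{2}=\|d\Phi^{\ast}_{(g,\pi)}[u,Z]\|_{\mathcal{M}_{0,-n+p,\rho}\times\mathcal{S}_{0,-n+p+1,\rho}}^{2}, \]
since the factor $r^{n-2p}\rho$ squared times the $r^{-n+2p}\rho^{-1}$ weight defining the $\rho^{-1}$ norm collapses exactly back to the $\rho$ norm with shifted decay, and similarly for $\omega$. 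This is the first bookkeeping step I would record.

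Next, I would test the Euler--Lagrange equation $d\Phi_{(g,\pi)}[h,\omega]=(f,V)$ against $(u,Z)$ itself. By the very definition of the adjoint,
\[ \int_{\Omega}(f,V)\cdot_{g}(u,Z)\,d\upsilon=\int_{\Omega}d\Phi_{(g,\pi)}[h,\omega]\cdot_{g}(u,Z)\,d\upsilon=\int_{\Omega}(h,\omega)\cdot_{g} d\Phi^{\ast}_{(g,\pi)}[u,Z]\,d\upsilon, \]
and the right-hand side, after inserting the explicit forms of $h$ and $\omega$, equals $\|d\Phi^{\ast}_{(g,\pi)}[u,Z]\|_{\mathcal{M}_{0,-n+p,\rho}\times\mathcal{S}_{0,-n+p+1,\rho}}^{2}=\|(h,\omega)\|_{\mathcal{M}_{0,-p,\rho^{-1}}\times\mathcal{S}_{0,-p-1,\rho^{-1}}}^{2}$. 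This is the energy identity.

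The left-hand side is handled by Cauchy--Schwarz applied to the $\mathcal{L}^{2}$ duality between $\mathcal{H}_{0,-p-2,\rho^{-1}}$ and $\mathcal{H}_{0,-n+p+2,\rho}$ (and analogously for the vector components), giving
\[ \left|\int_{\Omega}(f,V)\cdot_{g}(u,Z)\,d\upsilon\right|\leq \|(f,V)\|_{\mathcal{H}_{0,-p-2,\rho^{-1}}\times\mathcal{X}_{0,-p-2,\rho^{-1}}}\,\|(u,Z)\|_{\mathcal{H}_{0,-n+p+2,\rho}\times\mathcal{X}_{0,-n+p+2,\rho}}. \]
I then dominate the weaker norm of $(u,Z)$ by the full Sobolev norm and invoke the Basic Estimate (Proposition \ref{pro:basic}) to get $\|(u,Z)\|_{\mathcal{H}_{2,-n+p+2,\rho}\times\mathcal{X}_{1,-n+p+2,\rho}}\leq C\|d\Phi^{\ast}_{(g,\pi)}[u,Z]\|=C\|(h,\omega)\|_{\mathcal{M}_{0,-p,\rho^{-1}}\times\mathcal{S}_{0,-p-1,\rho^{-1}}}$.

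Putting these inequalities together yields
\[ \|(h,\omega)\|^{2}_{\mathcal{M}_{0,-p,\rho^{-1}}\times\mathcal{S}_{0,-p-1,\rho^{-1}}}\leq C\,\|(f,V)\|_{\mathcal{H}_{0,-p-2,\rho^{-1}}\times\mathcal{X}_{0,-p-2,\rho^{-1}}}\,\|(h,\omega)\|_{\mathcal{M}_{0,-p,\rho^{-1}}\times\mathcal{S}_{0,-p-1,\rho^{-1}}}, \]
from which the claim follows by division. No step is really an obstacle here, provided that the duality pairing and the exponent bookkeeping in the doubly weighted spaces are done carefully; the only substantive input beyond Cauchy--Schwarz is the coercivity estimate of Proposition \ref{pro:basic}, which has already been established. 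If there is a mild subtlety to check it is the legitimacy of testing the Euler--Lagrange equation against $(u,Z)$ itself, but this is permitted because $(u,Z)$ lies in the very space on which $\mathcal{G}$ is defined and $(h,\omega)$ has been constructed to lie in the relevant dual space.
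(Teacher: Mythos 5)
Your proof is correct and follows essentially the same route as the paper: the definitions of $h$ and $\omega$ collapse the $\rho^{-1}$-weighted norms onto $\|d\Phi^{\ast}_{(g,\pi)}[\tu,\tZ]\|$, the variational structure gives the energy bound, and duality Cauchy--Schwarz together with the coercivity of Proposition \ref{pro:basic} closes the estimate after dividing. The only (immaterial) difference is that you use the stationarity identity obtained by testing the Euler--Lagrange equation against $(\tu,\tZ)$, whereas the paper uses the comparison $\mathcal{G}(\tu,\tZ)\leq\mathcal{G}(0,0)=0$, which yields the same inequality up to a factor of $2$.
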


\begin{proof}
Let us recall, from Subsection \ref{subs:varframe}, that given the Euler-Lagrange equation of the functional $\mathcal{G}$ the tensors $h$ and $\omega$ have been defined by means of the equations
\[ h=r^{n-2p}\rho \left(d\Phi^{\ast}_{(g,\pi)}\right)^{(1)}[\tu,\tZ], \ \ \omega=r^{n-2p-2}\rho \left(d\Phi^{\ast}_{(g,\pi)}\right)^{(2)}[\tu,\tZ] 
\]
for $(\tu,\tZ)$ the unique minimizer of $\mathcal{G}$ over the functional space $\mathcal{H}_{2,-n+p+2,\rho}\times \mathcal{X}_{1,-n+p+2,\rho}$. As a result, one has $\mathcal{G}(\tu,\tZ)\leq\mathcal{G}(0,0)=0$ which means
\[\left\|d\Phi_{(g,\pi)}^{\ast}[\tu,\tZ]\right\|^{2}_{\mathcal{M}_{0,-n+p,\rho}\times \mathcal{S}_{0,-n+p+1,\rho}}\leq 2\left\|(\tu,\tZ)\right\|_{\mathcal{H}_{0,-n+p+2,\rho}\times \mathcal{X}_{0,-n+p+2,\rho}}\left\|\left(f,V\right)\right\|_{\mathcal{H}_{0,-p-2,\rho^{-1}}\times \mathcal{X}_{0,-p-2,\rho^{-1}}}.
\]
It follows that, thanks to the basic estimate (Proposition \ref{pro:basic}) we get
\[\left\|d\Phi^{\ast}_{(g,\pi)}[\tu,\tZ]\right\|_{\mathcal{M}_{0,-n+p,\rho}\times \mathcal{S}_{0,-n+p+1,\rho}}\leq C\left\|\left(f,V\right)\right\|_{\mathcal{H}_{0,-p-2,\rho^{-1}}\times \mathcal{X}_{0,-p-2,\rho^{-1}}}
\]
which is equivalent to 
\[ \left\|\left(h,\omega\right)\right\|_{\mathcal{M}_{0,-p,\rho^{-1}}\times \mathcal{S}_{0,-p-1,\rho^{-1}}}\leq C\left\|\left(f,V\right)\right\|_{\mathcal{H}_{0,-p-2,\rho^{-1}}\times \mathcal{X}_{0,-p-2,\rho^{-1}}} 
\]
that is what we had to prove.
\end{proof}

\subsection{The weighted constraints system}

For the Schauder estimates, it is necessary to compute the partial differential equations solved by $(\tu,\tZ)$. Most importantly, we need to prove its ellipticity and determine the rate of decay of its coefficients.
By making use of the very definitions of the derivative maps $d\Phi$ and $d\Phi^{\ast}$, via tedious but elementary computations one can check that the differential system solved by $(\tu,\tZ)$ takes the form
\begin{displaymath}
\begin{cases} \left[T_{1}\tu+\sum_{0\leq\left|\beta\right|\leq 3} a^{(1)}_{\beta}\partial^{\beta}\tu\right]+C^{s}_{(-p-1)}\ast\left[\sum_{0\leq\beta\leq 3}c^{(1)}_{\beta}\partial^{\beta}\tZ\right]=r^{2p-n}\phi^{-2N}f \\
\\
 \left[T_{2}\tZ+\sum_{0\leq\left|\beta\right|\leq 1} c^{(2)}_{\beta}\partial^{\beta}\tZ\right]+C^{s}_{(-p-1)}\ast C^{r}_{(2)}\ast\left[\sum_{0\leq\beta\leq 3}a^{(2)}_{\beta}\partial^{\beta}\tu\right]=-2r^{2p+2-n}\phi^{-2N}V
\end{cases}\end{displaymath}
with
\[ T_{1}=\Delta\left(\Delta \tu\right), \ \ |a^{(1)}_{\beta}|\lesssim r^{\left|\beta\right|-4}\phi^{-2\vee \left|\beta\right|-4}, \ \ |a^{(2)}_{\beta}|\lesssim r^{\left|\beta\right|-3}\phi^{-2 \vee \left|\beta\right|-3}
\]
and
\[ T_{2}=\Delta \tZ+ Div(\nabla \tZ)+C^{s}_{(-2p)}\ast\partial^{2}\tZ, \ \ |c^{(1)}_{\beta}|\lesssim r^{\left|\beta\right|-3}\phi^{-2 \vee \left|\beta\right|-3},\ \ |c^{(2)}_{\beta}|\lesssim r^{\left|\beta\right|-2}\phi^{\left|\beta\right|-2}. 
\]
We stress that in the previous equations all differential operators are Euclidean, namely referred to the flat background metric. We shall also remind the reader that the symbols $C^{s}_{(-q)}$ and $C^{r}_{(-q)}$ have been defined in Subsection \ref{subs:doubly}.

By letting $u=r\tu, \ Z=\tZ$ we can rewrite the previous equations in the final form of the elliptic system for $(u,Z)$ that follows:
\begin{equation}\label{system}
\begin{cases}
\left[T_{1}u+\sum_{0\leq\left|\beta\right|\leq 3} a^{(1)}_{\beta}\partial^{\beta}u\right]+C^{s}_{(-p)}\ast\left[\sum_{0\leq\beta\leq 3}c^{(1)}_{\beta}\partial^{\beta}Z\right]=r^{2p+1-n}\phi^{-2N}f \\
\\
\left[T_{2}Z+\sum_{0\leq\left|\beta\right|\leq 1} c^{(2)}_{\beta}\partial^{\beta}Z\right]+C^{s}_{(-p)}\ast\left[\sum_{0\leq\beta\leq 3}a^{(2)}_{\beta}\partial^{\beta}u\right]=-2r^{2p+2-n}\phi^{-2N}V
\end{cases}
\end{equation}
where the coeffiecients are not necessarily the same as above but satisfy all of the same decay estimates in $(r,\phi)$.

\subsection{Douglis-Nirenberg ellipticity}

In order to proceed further, we need to prove H\"older estimates on the solution $(h,\omega)$. We make use of a specific result for inhomogeneous systems, due to Douglis-Nirenberg \cite{DN55}, which we briefly recall here for the convenience of the reader. To that aim, we first need to introduce some notation.

Let us consider a system of linear partial differential equations of the form
\begin{equation}\label{diffsys}
L_{i}w=\sum_{j=1}^{N}l_{ij}w_{j}=f_{i}, \ \ i=1,\ldots, N
\end{equation}
where for any $j=1,\dots, N$ we have that $w_{j}$ is a function of $n$ variables $(x_{1}, \ldots, x_{n})$ with $x\in\Gamma$ some regular domain of the Euclidean space. Let us assume that each differential operator $l_{ij}$ can be expressed as a polynomial in $\frac{\partial}{\partial x_{1}},\ldots, \frac{\partial}{\partial x_{n}}$ with sufficiently smooth coefficients.
Moreover, let us suppose that there exist $2N$ integers $\left\{s_{1},\ldots, s_{N}, t_{1},\ldots, t_{N}\right\}$ so that $l_{ij}$ has order \textsl{less or equal} than $s_{i}+t_{j}$ and let $l_{ij}'$ be the sum of the terms of $l_{ij}$ having order \textsl{exactly equal} to $s_{i}+t_{j}$ for any choice of our indices. Of course, it is not the case that such numbers always exist as one is supposed to find \textsl{integer} solutions to a linear system of $N^{2}$ equations in $2N$ unknowns: however, when this does happen, the determinant of the characteristic matrix of \eqref{diffsys}, namely $\left(l'_{ij}(x,\xi)\right)_{1\leq i,j\leq n}$ is an \textsl{homogeneous} polynomial $P(x,\xi)$ of degree $m=\sum_{k=1}^{N}(s_{k}+t_{k})$ in $\xi_{1},\ldots,\xi_{n}\in\mathbb{R}$. Indeed, each summand in $P(x,\xi)$ will have degree of the form $\sum_{k=1}^{N}(s_{k}+t_{\sigma(k)})$ for some $\sigma\in\mathcal{S}_{N}$, the symmetric group on $N$ elements, and of course
\[\sum_{k=1}^{N}(s_{k}+t_{\sigma\left(k\right)})=\sum_{k=1}^{N}s_{k}+\sum_{k=1}^{N}t_{\sigma(k)}=\sum_{k=1}^{N}(s_{k}+t_{k})=m.
\]

We will say that the system \eqref{diffsys} is \textsl{elliptic} (according to Douglis-Nirenberg) if there exist $s_{1},\ldots, s_{N},t_{1},\ldots,t_{N}\in\mathbb{Z}$ so that, at every point $x$ the determinant $P(x,\xi)$ does not vanish for every $\xi\neq 0$.  

Obviously, when we deal with an elliptic system as above, we can always reduce to the case when
\[ s_{i}\leq 0, \ i=1,\ldots, N \ \ \  \max_{i}s_{i}=0 \ \ \  t_{j}\geq 0, \ j=1,\ldots, N 
\]
which is motivated by the form of the Schauder estimates for a single (scalar) PDE, as will be apparent from the statement below.
Correspondingly, let us set
\[ \min_{i}s_{i}=-s, \ \ \ \max_{j}t_{j}=t.
\]

Now, assume the domain $\Gamma$ is bounded and let $d:\Gamma\to\mathbb{R}$ be the distance function from the boundary $\partial\Gamma$: for every $k\in\mathbb{Z}_{\geq 0}$ and $l\in\mathbb{R}$ we consider the weighted H\"older norm
\[ \left\|u\right\|^{\left(l\right)}_{k,\alpha}=\sum_{i=0}^{k}\sup_{x\in\Gamma}d(x)^{-l+i}\left|\partial^{i}u(x)\right|+\sup_{x\in\Gamma}d(x)^{-l+k+\alpha}\left[\partial^{k}u\right]_{\alpha}
\] 
and let $\mathcal{C}^{k,\alpha}_{l}(\Gamma;\mathbb{R})$ be the Banach space which is gotten by completing the space of restrictions of elements in $\mathcal{C}_{c}^{\infty}(\mathbb{R}^{n};\mathbb{R})$ with respect to such norm.
In order to state the interior regularity theorem of Douglis-Nirenberg, we need to give the following:

\

\textsl{Hypothesis (\textbf{H}):} let us write $l_{ij}(x,\partial)=\sum_{|\beta|=0}^{s_{i}+t_{j}}a_{ij,\beta}(x)\partial^{\beta}$ where the sum is understood to be first taken over all terms of $l_{ij}$ of order equal to $|\beta|$. For some constant $\alpha\in\left(0,1\right)$, a fixed positive constant $K$ and all the indices $i,j\in\left\{1,\ldots, n\right\}$ we require that:
\begin{enumerate}
\item{the coefficients $a_{ij,\beta}$ belong to the space $\mathcal{C}^{-s_{i},\alpha}_{-s_{i}-t_{j}+|\beta|}$ and 
\[ \sup_{i,j,\beta}\left\|a_{ij,\beta}\right\|_{-s_{i},\alpha}^{\left(-s_{i}-t_{j}+|\beta|\right)}\leq K; 
\]}
\item{the inhomogeneous term $f_{i}$ belongs to the space $\mathcal{C}^{-s_{i},\alpha}_{-s_{i}-t}$;}
\item{the characteristic determinant satisfies
\[ P(x,\xi)\geq K^{-1}\left(\sum_{i=1}^{n}\xi_{i}^{2}\right)^{m/2}.
\]
}
\end{enumerate}
Let us explicitly remark that the notation we are using here for weighted H\"older spaces is different from that in \cite{DN55} and the two are patently incompatible.

\begin{theorem}\label{regsys}(see \cite{DN55}, Theorem 1) Let $u$ be a solution of the system \eqref{diffsys} under the assumption (\textbf{H}). Assume that $u_{j}\in \mathcal{C}^{0,0}_{-t+t_{j}}$ and that $u_{j}$ has H\"older continuous derivatives up to order $t_{j}$ in $\Gamma$ for each value of the index $j=1,\ldots, N$. Then $u_{j}\in \mathcal{C}^{t_{j},\alpha}_{-t+t_{j}}$ and
\[ \left\|u_{j}\right\|_{t_{j},\alpha}^{\left(-t+t_{j}\right)}\leq C\left(\sum_{i=1}^{N}\left\|u_{i}\right\|_{0,0}^{(-t+t_{i})}+\sum_{i=1}^{N}\left\|f_{i}\right\|^{(-s_{i}-t)}_{-s_{i},\alpha}\right)
\]
for some constant $C=C(K,n,N,s_{1},\ldots, t_{N},\alpha)$.
\end{theorem}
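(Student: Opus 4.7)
The plan is to follow the classical Schauder program for elliptic systems, adapted to the Douglis--Nirenberg index structure. I would proceed in three steps: (i) prove sharp a priori H\"older estimates for the constant-coefficient homogeneous model on $\R^{n}$; (ii) extend them to variable coefficients on a fixed small ball by freezing coefficients and absorbing the perturbation; and (iii) obtain the weighted statement by rescaling on balls of radius comparable to $d(x)$ and taking a supremum over $x\in\Gamma$.

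For step (i), fix $x_{0}\in\Gamma$, consider the principal symbol matrix $(l'_{ij}(x_{0},\xi))$, and use $\det(l'_{ij}(x_{0},\xi))=P(x_{0},\xi)\geq K^{-1}|\xi|^{m}$ to invert it: the $(i,j)$-entry of the inverse is a rational function homogeneous of degree $s_{i}+t_{j}-m$ in $\xi$. Passing to the matrix fundamental solution (equivalently, the associated Fourier multiplier) and applying Calder\'on--Zygmund-type H\"older estimates componentwise yields
\[
\sum_{j}[\partial^{t_{j}}u_{j}]_{\alpha,B_{1/2}}\leq C\sum_{i}[\partial^{-s_{i}}f_{i}]_{\alpha,B_{1}}+C\|u\|_{\infty,B_{1}},
\]
together with the corresponding sup bounds on lower-order derivatives of each $u_{j}$ up to order $t_{j}$. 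The Douglis--Nirenberg orders are exactly what guarantee that each entry of the inverse symbol lands in the scalar H\"older space where these singular-integral bounds are sharp.

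For steps (ii)--(iii), I would split $l_{ij}(x,\partial)=l_{ij}(x_{0},\partial)+(l_{ij}(x,\partial)-l_{ij}(x_{0},\partial))$, treat the second term as a small perturbation using the H\"older continuity of the top-order coefficients (Hypothesis~(\textbf{H}) with $d\sim\mathrm{const}$ on a small ball), and combine with interpolation between H\"older norms of successive orders to get the unweighted local Schauder estimate
\[
\sum_{j}\|u_{j}\|_{t_{j},\alpha;B_{r/2}(x_{0})}\leq C\sum_{i}\|f_{i}\|_{-s_{i},\alpha;B_{r}(x_{0})}+C\sum_{i}\|u_{i}\|_{0;B_{r}(x_{0})}.
\]
To pass to the weighted version I would rescale by $y=(x-x_{0})/d(x_{0})$ via the Douglis--Nirenberg-compatible dilations $\tilde u_{j}(y)=d(x_{0})^{t-t_{j}}u_{j}(x_{0}+d(x_{0})y)$ and $\tilde f_{i}(y)=d(x_{0})^{t+s_{i}}f_{i}(x_{0}+d(x_{0})y)$; the precise powers of $d$ appearing in the definition of $\mathcal{C}^{-s_{i},\alpha}_{-s_{i}-t_{j}+|\beta|}$ in Hypothesis~(\textbf{H}) are designed exactly so that the rescaled coefficients $\tilde a_{ij,\beta}$ obey uniform-in-$x_{0}$ H\"older bounds on the unit ball. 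Applying the local estimate to the rescaled system and undoing the scaling, the powers of $d(x_{0})$ that surface on both sides reconstitute precisely the weighted norms $\|\cdot\|^{(-t+t_{j})}_{t_{j},\alpha}$ and $\|\cdot\|^{(-s_{i}-t)}_{-s_{i},\alpha}$; taking the supremum over $x_{0}\in\Gamma$ then delivers the claim.

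The hard part is step (i): a sharp constant-coefficient H\"older estimate for a system with mixed orders forces one to apply scalar singular-integral theory componentwise to entries of the inverse symbol matrix of \emph{different} homogeneities, and it is the algebraic Douglis--Nirenberg structure --- the very existence of integers $s_{i},t_{j}$ making the characteristic determinant homogeneous of degree $m=\sum_{k}(s_{k}+t_{k})$ --- that makes this bookkeeping close up and gives each $u_{j}$ exactly $t_{j}$ H\"older-continuous derivatives. Once (i) is secured, the freezing-and-rescaling arguments of (ii)--(iii) are fairly routine, with the weighted formulation absorbed entirely into the calibrated exponents of Hypothesis~(\textbf{H}).
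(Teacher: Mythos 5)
There is nothing in the paper to compare your argument against: Theorem \ref{regsys} is not proved in this article at all, it is imported verbatim (up to a change of notation for the weighted H\"older spaces) from Douglis--Nirenberg, Theorem 1 of \cite{DN55}, and is then used as a black box to get Lemma \ref{pwsch}. Judged on its own terms, your outline is essentially the classical route, and in fact the route of the cited reference: constant-coefficient estimates obtained by inverting the characteristic matrix (Douglis--Nirenberg do this through explicit fundamental-solution/potential representations built from the cofactors of $\left(l'_{ij}\right)$ rather than through Fourier multipliers, but this is the same mechanism), then freezing of coefficients, then the weighted statement; the only genuinely ``modern'' deviation is that you produce the weighted norms by rescaling balls of radius comparable to $d(x_{0})$, whereas \cite{DN55} carries the distance weights through the potential estimates directly --- your rescaling bookkeeping (the powers $d(x_{0})^{t-t_{j}}$ on $u_{j}$, $d(x_{0})^{t+s_{i}}$ on $f_{i}$, and the observation that Hypothesis (\textbf{H}) is calibrated so that the rescaled coefficients are uniformly bounded in $\mathcal{C}^{-s_{i},\alpha}$ of the unit ball) is correct and is the cleaner way to present it.

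One slip to fix in step (i): the entry of the inverse symbol matrix that couples $f_{i}$ to $u_{j}$ is \emph{not} homogeneous of degree $s_{i}+t_{j}-m$. The cofactor obtained by deleting the $i$-th row and $j$-th column of $\left(l'_{kl}\right)$ is homogeneous of degree $m-s_{i}-t_{j}$, and dividing by the determinant $P(x_{0},\xi)$, of degree $m$, gives a symbol homogeneous of degree $-(s_{i}+t_{j})$. This is exactly the count you need: the solution operator gains $s_{i}+t_{j}$ derivatives, so $f_{i}\in\mathcal{C}^{-s_{i},\alpha}$ contributes $\mathcal{C}^{t_{j},\alpha}$ regularity to $u_{j}$, which is what makes the componentwise Calder\'on--Zygmund bookkeeping close up. With that correction, and with the usual cutoff/parametrix care needed to apply multiplier or kernel estimates on a ball rather than on all of $\mathbb{R}^{n}$, your sketch reproduces the content of \cite{DN55}, Theorem 1.
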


Let us now discuss the applicability of this result to our problem, namely to the system \eqref{system}. 
If $\textrm{dim}(M)=n$ one first needs to find $2n+2$ integers $s_{1},\ldots,s_{n+1}, t_{1},\ldots, t_{n+1}$ satisfying the algebraic system
\[\begin{cases}
s_{1}+t_{1}=4 \\
s_{i}+t_{1}=3 & i>1 \\
s_{1}+t_{j}=3 & j>1 \\
s_{i}+t_{j}=2 & i,j>1 \\
\end{cases}
\]
and we will pick 
\[ s_{1}=0, \ \ s_{i}=-1, \ \ t_{1}=4, \ \ t_{j}=3 \ \ \ \  \ \ 2\leq i,j\leq n+1.
\]

\begin{lem}\label{lem:ellsys}
There exists $a_{\infty,N}\in\mathbb{R}$ such that for any $a\in\mathbb{R}^{n}$ such that $\left|a\right|\geq a_{\infty,N}$ the system \eqref{system} is elliptic, in the sense of Douglis-Nirenberg, on the domain $\Omega=\Omega(a)$.
\end{lem}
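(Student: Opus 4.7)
The plan is to verify the Douglis--Nirenberg condition directly with the weight assignment $s_1=0$, $s_i=-1$ for $i\geq 2$, $t_1=4$, $t_j=3$ for $j\geq 2$, by isolating the principal symbol of each entry of the $(n+1)\times(n+1)$ matrix of differential operators in \eqref{system} and showing that, for $|a|$ large, the characteristic polynomial $P(x,\xi)$ satisfies $P(x,\xi)\geq c|\xi|^{2n+4}$ uniformly on $\Omega(a)$ for some $c>0$. Note that with this choice one has $m=\sum_k(s_k+t_k)=(0-n)+(4+3n)=2n+4$, consistent with a total order of $2n+4$.

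First I would identify the principal symbols entry by entry. The $(1,1)$ entry contains $T_1=\Delta^2$, whose Euclidean principal symbol is $|\xi|^4$. The lower-right $n\times n$ block contains $T_2=\Delta Z+\mathrm{Div}(\nabla Z)+C^s_{(-2p)}\ast\partial^2 Z$; its principal part gives the symbol matrix
\[
D(x,\xi)\;=\;|\xi|^2\,\delta_{ij}+\xi_i\xi_j+E_{ij}(x,\xi),
\]
where $E_{ij}$ comes from the $C^s_{(-2p)}\ast\partial^2$ contribution and therefore satisfies $|E_{ij}(x,\xi)|\leq C\, s(x)^{-2p}|\xi|^2$. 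Since the unperturbed matrix $|\xi|^2 I+\xi\xi^T$ has eigenvalues $2|\xi|^2$ and $|\xi|^2$ (the latter with multiplicity $n-1$), one has $\det(|\xi|^2 I+\xi\xi^T)=2|\xi|^{2n}$. The off-diagonal blocks consist of the top-order terms of $C^s_{(-p)}\ast\sum_{|\beta|\leq 3}c_\beta\partial^\beta$, so their entries have size at most $Cs(x)^{-p}|\xi|^3$ for the $(1,j)$ and $(i,1)$ blocks, $i,j\geq 2$.

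Next, invoking \eqref{eq:min}, for $|a|\geq a_{\ast}$ we have $s(x)\geq c_0|a|$ on $\Omega$, hence the off-diagonal entries of the principal symbol matrix are uniformly $O(|a|^{-p}|\xi|^3)$ and the perturbation $E$ in $D$ is uniformly $O(|a|^{-2p}|\xi|^2)$. In particular, for $|a|$ sufficiently large, $D(x,\xi)$ differs from $|\xi|^2 I+\xi\xi^T$ by a perturbation small in operator norm relative to $|\xi|^2$, so $\det D(x,\xi)\geq c_1|\xi|^{2n}$ with $c_1>0$ uniform. I would then compute $P(x,\xi)$ by viewing the full symbol matrix as a block matrix
\[
\begin{pmatrix} |\xi|^4 & B(x,\xi) \\ C(x,\xi) & D(x,\xi)\end{pmatrix},
\]
with $B,C$ of order $O(|a|^{-p}|\xi|^3)$. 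The Schur/cofactor expansion gives
\[
P(x,\xi)\;=\;|\xi|^4\det D(x,\xi)\;-\;B(x,\xi)\,\mathrm{cof}(D(x,\xi))\,C(x,\xi).
\]
Since entries of $\mathrm{cof}(D)$ are $O(|\xi|^{2(n-1)})$, the error term is bounded by $C|a|^{-2p}|\xi|^{2n+4}$, while the leading term is at least $c_1|\xi|^{2n+4}$. Choosing $a_{\infty,N}$ large enough that $C|a|^{-2p}<c_1/2$ yields
\[
P(x,\xi)\;\geq\;\tfrac{c_1}{2}\,|\xi|^{2n+4}\qquad\text{for all }x\in\Omega(a),\ \xi\neq 0,
\]
which is precisely the Douglis--Nirenberg ellipticity condition.

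The main point of care is organizational rather than analytical: one must verify that no entry of the operator matrix has an intrinsic order strictly above the prescribed $s_i+t_j$ (so the grading is admissible), and that the $O(|a|^{-p})$ smallness of the off-diagonal coefficients genuinely survives differentiation (which it does, since the $C^s_{(-p)}$ factor is a purely multiplicative weight and its derivatives only produce lower-order terms). Once these bookkeeping points are checked, the ellipticity of the decoupled diagonal blocks together with the smallness of the coupling gives the conclusion via the perturbation computation above.
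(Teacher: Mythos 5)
Your proposal is correct and follows essentially the same route as the paper: both treat the full symbol as a perturbation of the decoupled (time-symmetric) one, evaluate the decoupled characteristic determinant as $2|\xi|^{2n+4}$ (you via the eigenvalues $2|\xi|^2$ and $|\xi|^2$ of $|\xi|^2 I+\xi\xi^{T}$, the paper via the rank-one update formula $\det(A+bb^{t})=\det A+b^{t}\mathrm{adj}(A)b$), and then absorb the coupling terms using their $s(x)^{-p}$ smallness together with \eqref{eq:min} for $|a|$ large. Your explicit Schur/cofactor expansion is just a slightly more detailed bookkeeping of the paper's one-line bound $P(x,\xi)\geq \tilde{P}(x,\xi)-Cs(x)^{-p}|\xi|^{m}$, so the two arguments coincide in substance.
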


\begin{proof}
By the very definition of determinant, we can write the characteristic determinant for the Einstein constraint \eqref{system} as
\[ 
P(x,\xi)\geq \tilde{P}(x,\xi)-Cs(x)^{-p}\left|\xi\right|^{m}
\]
for $m=2n+4$ and $C$ a constant which only depends on the decay assumptions on $g-\delta$ and $\pi$, where we have set
\begin{equation*}
\tilde{P}(x,\xi)=\det
\begin{pmatrix}
|\xi|^4      &     0              &     0             & \cdots    & 0 \\
    0        & |\xi|^2 + \xi_1^2  & \xi_1 \xi_2       & \cdots    & \xi_1 \xi_n  \\
		0        & \xi_1 \xi_2        & |\xi|^2 + \xi_2^2 & \cdots    & \xi_2 \xi_n  \\
		\vdots   &     \vdots         & \vdots            & \ddots    & \vdots       \\
		0        & \xi_1 \xi_n        & \xi_2 \xi_n       & \cdots    & |\xi|^2 + \xi_n^2 \\
\end{pmatrix}.
\end{equation*}
Let us notice that this would be the characteristic matrix of the Einstein system in the case when $(\tu,\tZ)$ were not coupled, namely in the time-symmetric case when $\pi=0$.
Now, the block structure of such matrix immediately implies that
\[ \tilde{P}(x,\xi)=\left|\xi\right|^{4}\det\left(A+bb^{t}\right)
\]
for 
\begin{equation*}
A(\xi)=\left|\xi\right|^{2}
\begin{pmatrix}
    1        &     0              &     0             & \cdots    & 0 \\
    0        &     1              &                    & \cdots    & 0  \\
		0        &     0              &     1 & \cdots    & 0  \\
		\vdots   &     \vdots         & \vdots            & \ddots    & \vdots       \\
		0        &     0              & 0       & \cdots    & 1 \\
\end{pmatrix}, \  \
b=
\begin{pmatrix}
\xi_{1} \\
\xi_{2} \\
\vdots  \\
\vdots  \\
\xi_{n}  \\
\end{pmatrix}.
\end{equation*}
It is a well-known fact in linear algebra that, given an $n\times n$ matrix $A$ and vectors $b_{1}, b_{2}\in\mathbb{R}^{n}$ then $\det(A+b_{1}b_{2}^{t})=\det(A)+b_{2}^{t}\textrm{adj}(A)b_{1}$ and thus, in our case this implies at once that in fact $\tilde{P}(x,\xi)=2\left|\xi\right|^{2n+4}$. Hence
\[ P(x,\xi)\geq \left(2-Cs(x)^{-p}\right)\left|\xi\right|^{m}, \ \ \ x\in \Omega \ \ \xi\in\mathbb{R}^{n}
\]
and the conclusion follows by picking $|a|$ large enough by virtue of inequality \eqref{eq:min}.
\end{proof}

Before proceeding further, let us make an important remark on our notations: \textsl{in the rest of this subsection, as well as in the next one we will adopt the notation $\mathcal{H}_{k,q,\rho}$ when referring to any type of tensor.}

\begin{lem}\label{pwsch}
For any $\alpha\in\left(0,1\right)$ the following H\"older bounds hold true for $(u,Z)$:
\[\left\|u\right\|_{4,\alpha}^{(0,0)}\leq C\left(\left\|\overline{u}\right\|_{0,0}^{(0,0)}+\left\|\overline{Z}\right\|_{0,0}^{(-1,-1)}+\left\|f\right\|^{\left(-2p-5+n, -4+2N\right)}_{0,\alpha}+\left\|V\right\|^{\left(-2p-5+n,-3+2N\right)}_{1,\alpha}\right)
\]
\[\left\|Z\right\|_{3,\alpha}^{(-1,-1)}\leq C\left(\left\|\overline{u}\right\|_{0,0}^{(0,0)}+\left\|\overline{Z}\right\|_{0,0}^{(-1,-1)}+\left\|f\right\|^{\left(-2p-5+n, -4+2N\right)}_{0,\alpha}+\left\|V\right\|^{\left(-2p-5+n,-3+2N\right)}_{1,\alpha}\right).
\]
as well as the following for the solution $(h,\omega)$ of the linearized problem:
\[\left\|h\right\|^{\left(-p,+N-n/2-2\right)}_{2,\alpha}\leq C\left(\left\|(f,V)\right\|_{\mathcal{H}_{0,-p-2,\rho^{-1}}\times \mathcal{H}_{0,-p-2,\rho^{-1}}}+\left\|f\right\|^{\left(-p-2,N-n/2-4\right)}_{0,\alpha}+\left\|V\right\|^{(-p-2,N-n/2-3)}_{1,\alpha}\right)
\]
\[\left\|\omega\right\|^{\left(-p-1,N-n/2-2\right)}_{2,\alpha}\leq C\left(\left\|(f,V)\right\|_{\mathcal{H}_{0,-p-2,\rho^{-1}}\times \mathcal{H}_{0,-p-2,\rho^{-1}}}+\left\|f\right\|^{\left(-p-2,N-n/2-4\right)}_{0,\alpha}+\left\|V\right\|^{(-p-2,N-n/2-3)}_{1,\alpha}\right).
\]
\end{lem}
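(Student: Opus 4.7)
The strategy is to apply the interior Schauder estimate in the Douglis--Nirenberg form (Theorem \ref{regsys}) locally, and then use a scaling argument to convert it into doubly weighted H\"older bounds on $\Omega$. By Lemma \ref{lem:ellsys} the system \eqref{system} is Douglis--Nirenberg elliptic with indices $s_1 = 0$, $s_i = -1$ for $i > 1$, $t_1 = 4$, $t_j = 3$ for $j > 1$, so $t = 4$; the coefficient decay bounds displayed before \eqref{system} are what one needs to verify Hypothesis (\textbf{H}) of Theorem \ref{regsys} uniformly. The key scaling is the following: for any $x_0 \in \Omega$ the ball $B_{d(x_0)/2}(x_0)$ lies in $\Omega$ and by \eqref{eq:equiv2} one has $d(x_0) \simeq r(x_0)\phi(x_0)$, with $r$ and $\phi$ comparable to $r(x_0), \phi(x_0)$ throughout the ball. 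Rescaling $y = (x - x_0)/d(x_0)$ and normalizing the unknowns by suitable powers of $r(x_0)$ and $\phi(x_0)$ produces a Douglis--Nirenberg elliptic system on $B_{1/2}(0)$ whose coefficients are uniformly bounded in the appropriate H\"older spaces, independently of $x_0$. Theorem \ref{regsys} applied on the unit ball then yields a local H\"older estimate which, after undoing the scaling, becomes a doubly weighted estimate on $\Omega$, with weights on $u$, $Z$, $f$, $V$ determined by the chosen normalization.

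For the first pair of bounds we pick the normalization making $u$ weight $(0,0)$ and $Z$ weight $(-1,-1)$; a direct computation shows that the factors $d^4 r^{2p+1-n}\phi^{-2N}$ and $d^2 r^{2p+2-n}\phi^{-2N}$ on the rescaled right-hand side force $f$ to weight $(-2p-5+n,-4+2N)$ and $V$ to weight $(-2p-5+n,-3+2N)$, exactly as in the statement. The $\bar u, \bar Z$ terms arise as the natural $L^\infty$ contributions to the interior Schauder estimate, and taking $\sup_{x_0 \in \Omega}$ produces the first two inequalities.

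For the second pair we apply the same scheme but with the modified normalization that puts $u$ in weight $(p+3-n,-N-n/2)$ and $Z$ in a compatible weight, both chosen so that the defining expressions
\[
h = r^{n-2p}\rho\,(d\Phi^*)^{(1)}[u/r, Z], \qquad \omega = r^{n-2p-2}\rho\,(d\Phi^*)^{(2)}[u/r, Z],
\]
which are finite-order differential expressions in $u, Z$, yield $h$ in weight $(-p, N-n/2-2)$ and $\omega$ in weight $(-p-1, N-n/2-2)$; the matching source weights then compute to $(-p-2, N-n/2-4)$ and $(-p-2, N-n/2-3)$. In this normalization the $\bar u, \bar Z$ terms on the right are controlled via H\"older's inequality on $B_{d(x)/2}(x)$ by the $L^2$ norms $\|\tilde u\|_{\mathcal{H}_{0,-n+p+2,\rho}}$ and $\|\tilde Z\|_{\mathcal{X}_{0,-n+p+2,\rho}}$, with precisely the weights $r^{p+3-n}\phi^{-N-n/2}$ (and analogous for $Z$) that match our rescaling; by Proposition \ref{pro:basic} applied to the minimizer of $\mathcal{G}$, this $L^2$ norm is in turn bounded by $\|(f,V)\|_{\mathcal{H}_{0,-p-2,\rho^{-1}} \times \mathcal{X}_{0,-p-2,\rho^{-1}}}$. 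Translating the resulting weighted H\"older bounds on $u$ and $Z$ into bounds on $h$ and $\omega$ via the defining formulas gives the second pair of inequalities.

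The main technical obstacle is the careful exponent bookkeeping: verifying that the coefficient decay $|a^{(1)}_\beta| \lesssim r^{|\beta|-4}\phi^{-2 \vee |\beta|-4}$ (and its analogues for $a^{(2)}, c^{(1)}, c^{(2)}$) is exactly what Hypothesis (\textbf{H}) of Theorem \ref{regsys} requires after rescaling, so that the Schauder constants are uniform in $x_0 \in \Omega$; and confirming that the pointwise bounds on $\bar u, \bar Z$ coming from the $L^2$ estimate of Proposition \ref{pro:basic} produce precisely the weights needed to close the chain of rescaled Schauder estimates in the claimed final weights for $h$ and $\omega$.
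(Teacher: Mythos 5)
Your proposal is correct and follows essentially the same route as the paper: a localized Douglis--Nirenberg interior estimate (Theorem \ref{regsys}) at the natural scale $d(x)\simeq r(x)\phi(x)$, pointwise control of $\overline{u},\overline{Z}$ on $B_{d(x)/2}(x)$ by the weighted $\mathcal{L}^2$ norms with weight $r^{p+3-n}\phi^{-N-n/2}$, and translation into the bounds on $(h,\omega)$ through their defining formulas. The only imprecision is attributing the bound of the $\mathcal{L}^2$ norm of $(\tilde{u},\tilde{Z})$ by $\left\|(f,V)\right\|$ to Proposition \ref{pro:basic} alone: one also needs the variational inequality $\mathcal{G}(\tilde{u},\tilde{Z})\leq\mathcal{G}(0,0)=0$, i.e.\ Lemma \ref{lem:zero}, exactly as the paper combines the two.
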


Motivated by the previous Lemma, we are now in position to actually define the Banach spaces $X_{1}, X_{2}$. Given $\Omega$, as usual, the (regularized) region between the two cones we consider $X_{i}, \ i=1,2$ to be the closure of the set of smooth $(\textrm{functions}, \textrm{symmetric} \ (0,2)-\textrm{tensors})$ in $\Omega\hookrightarrow\R^{n}$ for which the norm $\left\|\cdot\right\|_{i}$ is finite, where
\begin{equation}\label{norm1} \left\|(f,V)\right\|_{1}=\left\|(f,V)\right\|_{\mathcal{H}_{0,-p-2,\rho^{-1}}\times \mathcal{H}_{0,-p-2,\rho^{-1}}}+\left\|f\right\|^{\left(-p-2,N-n/2-4\right)}_{0,\alpha}+\left\|V\right\|^{(-p-2,N-n/2-3)}_{1,\alpha}
\end{equation}
and
\begin{equation}\label{norm2} \left\|(h,\omega)\right\|_{2}=\left\|(h,\omega)\right\|_{\mathcal{H}_{0,-p,\rho^{-1}}\times \mathcal{H}_{0,-p-1,\rho^{-1}}}+\left\|h\right\|^{\left(-p,N-n/2-2\right)}_{2,\alpha}+\left\|\omega\right\|^{\left(-p-1,N-n/2-2\right)}_{2,\alpha}.
\end{equation}

\begin{proof}
Thanks to Lemma \ref{lem:ellsys} and the decay of the coefficients of our system, we are in position to apply Theorem \ref{regsys} and get
\begin{align*}
& \sum_{i=0}^{4}d(x)^{i}\left|\partial^{i}u(x)\right|+d(x)^{4+\alpha}\left[\partial^{4}u\right]_{\alpha,B_{d(x)/2}\left(x\right)}+ \sum_{i=0}^{3}d(x)^{i+1}\left|\partial^{i}Z(x)\right|+d(x)^{4+\alpha}\left[\partial^{3}Z\right]_{\alpha,B_{d(x)/2}\left(x\right)} \\
& \leq C\left[\overline{u}(x)+d(x)\overline{Z}(x)\right]+Cd(x)^{4}r(x)^{2p+1-n}\phi(x)^{-2N}\left\{\sup_{B_{3d(x)/4}\left(x\right)}\left|f\right|+d(x)^{\alpha}\left[f\right]_{\alpha, B_{3d(x)/4}\left(x\right)}\right\} \\
& + Cd(x)^{3}r(x)^{2p+2-n}\phi(x)^{-2N}\left\{\sup_{B_{3d(x)/4}\left(x\right)}\left|V\right|+d(x)\sup_{B_{3d(x)/4}\left(x\right)}\left|\partial V\right|+d(x)^{1+\alpha}\left[\partial V\right]_{\alpha, B_{3d(x)/4}\left(x\right)}\right\}
\end{align*}
which immediately implies the first two claimed inequalities.
In order to prove the other estimates, one needs the following upper bounds for $\overline{u}$ and $\overline{Z}$. 
For the former:
\begin{align*}
 \left|\overline{u}(x)\right|^{2} & \leq Cd(x)^{-n}\int_{B_{d(x)/2}(x)}\left|u(y)\right|^{2}\,d\mathscr{L}^n(y) \leq Cd(x)^{-n}r(x)^{2}\int_{B_{d(x)/2}(x)}\left|\tu(y)\right|^{2}\,d\mathscr{L}^n(y) \\
& \leq Cr(x)^{2(p+3-n)}\phi(x)^{-2N-n}\left\|\tu\right\|^{2}_{\mathcal{H}_{2,p+2-n,\rho}} \\
& \leq Cr(x)^{2(p+3-n)}\phi(x)^{-2N-n}\left\|(\tu,\tZ)\right\|^{2}_{\mathcal{H}_{2,-n+p+2,\rho}\times \mathcal{H}_{1,-n+p+2,\rho}} \\
& \leq C r(x)^{2(p+3-n)}\phi(x)^{-2N-n}\left\|d\Phi^{\ast}_{\left(g,\pi\right)}(\tu,\tZ)\right\|^{2}_{\mathcal{H}_{0,-n+p,\rho}\times \mathcal{H}_{0,-n+p+1,\rho}} \\
& \leq C r(x)^{2(p+3-n)}\phi(x)^{-2N-n}\left\|\left(f,V\right)\right\|^{2}_{\mathcal{H}_{0,-p-2,\rho^{-1}}\times \mathcal{H}_{0,-p-2,\rho^{-1}}} 
\end{align*}
where we have used both the basic estimate, Proposition \ref{pro:basic}, and Lemma \ref{lem:zero}.
Similarly, for the latter:
\begin{align*}
 \left|d(x)\overline{Z}(x)\right|^{2} & \leq Cr(x)^{2-n}\phi(x)^{2-n}\int_{B_{d(x)/2}(x)}\left|Z(y)\right|^{2}\,d\mathscr{L}^n(y) \leq Cr(x)^{2(p+3-n)}\phi(x)^{-2N-n+2}\left\|\tZ\right\|^{2}_{\mathcal{H}_{1,-n+p+2,\rho}} \\
& \leq Cr(x)^{2(p+3-n)}\phi(x)^{-2N-n+2}\left\|(\tu,\tZ)\right\|^{2}_{\mathcal{H}_{2,-n+p+2,\rho}\times \mathcal{H}_{1,-n+p+2,\rho}} \\
& \leq C r(x)^{2(p+3-n)}\phi(x)^{-2N-n+2}\left\|d\Phi^{\ast}_{\left(g,\pi\right)}(\tu,\tZ)\right\|^{2}_{\mathcal{H}_{0,-n+p,\rho}\times \mathcal{H}_{0,-n+p+1,\rho}} \\
& \leq Cr(x)^{2(p+3-n)}\phi(x)^{-2N-n+2}\left\|\left(f,V\right)\right\|^{2}_{\mathcal{H}_{0,-p-2,\rho^{-1}}\times \mathcal{H}_{0,-p-2,\rho^{-1}}}
\end{align*}
so that by taking square roots one gets:
\[\left\|\overline{u}\right\|_{0,0}^{(0,0)}+\left\|\overline{Z}\right\|_{0,0}^{(-1,-1)}\leq C r(x)^{p+3-n}\phi(x)^{-N-n/2}\left\|\left(f,V\right)\right\|_{\mathcal{H}_{0,-p-2,\rho^{-1}}\times \mathcal{H}_{0,-p-2,\rho^{-1}}}.
\]
At that stage, one can exploit the H\"older estimates above, together with the very definition of the tensors $h$ and $\omega$ in terms of $(u,Z)$ to complete the proof.
For instance, one has for the zeroth order estimate on $h$:
\begin{align*}
 \left|h(x)\right| & \leq Cr(x)^{n-2p-1}\rho(x) d(x)^{-2}\left[d(x)^{2}\left|\partial^{2}u(x)\right|+d(x)\left|\partial u(x)\right|+\left|u(x)\right|\right] \\
& +Cr(x)^{n-2p-1}\rho(x) s(x)^{-p}d(x)^{-2}\left[d(x)^{2}\left|\partial Z(x)\right|+d(x)\left|Z(x)\right|\right] \\
& \leq C r(x)^{n-2p-3}\phi(x)^{2N-2} \times \\
& \left[r^{p+3-n}\phi^{-N-n/2}\left\|(f,V)\right\|_{\mathcal{H}_{0,-p-2,\rho^{-1}}\times \mathcal{H}_{0,-p-2,\rho^{-1}}}+\left\|f\right\|^{(-2p-5+n,-4+2N)}_{0,\alpha}+\left\|V\right\|^{(-2p-5+n,-3+2N)}_{1,\alpha}\right] \\ 
& \leq Cr(x)^{-p}\phi(x)^{N-n/2-2}\left[\left\|(f,V)\right\|_{\mathcal{H}_{0,-p-2,\rho^{-1}}\times \mathcal{H}_{0,-p-2,\rho^{-1}}}+\left\|f\right\|^{(-p-2, N-n/2-4)}_{0,\alpha}+\left\|V\right\|^{(-p-2, N-n/2-3)}_{0,\alpha}\right].
\end{align*} 
The other pointwise and (concerning the second derivatives) H\"older estimates for $h$ and $\omega$ are completely analogous and we omit the elementary details.
\end{proof}

\subsection{Convergence of the iteration}

We now proceed with the proof of Proposition \ref{pro:quadratic}.

\begin{proof} In order for us to obtain estimates for the difference of the quadratic terms, $Q_{(g,\pi)}[h_{1},\omega_{1}]-Q_{(g,\pi)}[h_{2},\omega_{2}]$ it is convenient to find an exact representation formula for such difference. Recalling the expression for the constraint equations, we need to consider 
\[ Q_{(g,\pi)}[h,\omega]=\left(\mathscr{H}(\overline{g},\overline{\pi})-\mathscr{H}(g,\pi)-d\Phi^{(1)}_{(g,\pi)}[h,\omega], Div_{\overline{g}}(\overline{\pi})-Div_{g}(\pi)-d\Phi^{(2)}_{(g,\pi)}[h,\omega]\right)
\]
where $\overline{g}_{ij}=g_{ij}+h_{ij}$ and $\overline{\pi}^{ab}=\pi^{ab}+\omega^{ab}$.
Throughout this subsection, we shall adopt the following simplified notation: $R$ (resp. $\overline{R}$) for the scalar curvature $R_g$ of $g$ (resp $R_{\overline{g}}$ of $\overline{g}$) and $R_{ij}$ (resp. $\overline{R}_{ij}$) for the expression in local coordinates of the Ricci curvature $Ric_g$ of $g$ (resp. $Ric_{\overline{g}}$ of $\overline{g}$).
Concerning the first component, we have to study the nonlinear part of the difference
\[\overline{R}-R+\frac{1}{n-1}\left[\left(Tr_{\overline{g}}\overline{\pi}\right)^{2}-\left(Tr_{g}\pi\right)^{2}\right]-\left[\left|\overline{\pi}\right|_{\overline{g}}^{2}-\left|\pi\right|^{2}_{g}\right]
\]
so let us work out each of the three summands separately.
We start with the scalar curvature term: first from the formula for the Ricci curvature we have
\[ \tR_{ij}-R_{ij}=D^k_{ij;k}-D^k_{ki;j}+D^k_{kl}D^l_{ij}
-D^k_{jl}D^l_{ki}
\]
where $D=\tG-\G$ is the difference of the Levi-Civita connections and
the semi-colon denotes the covariant derivative with respect to $g$.
If we write $\tg=g+h$, then we have
\[ D^k_{ij}=\frac{1}{2}\tg^{kl}(h_{il;j}+h_{jl;i}-h_{ij;l}).
\]
Now taking traces with respect to $\tg$ we have
\[ \tR-\tg^{ij}R_{ij}=\tg^{ij}(\tg^{kl}h_{il;j})_{;k}-\tg^{ij}(\tg^{kl}h_{kl;i})_{;j}+q(\tg,h)
\]
where we use $q(\tg,h)$ to denote a quadratic polynomial in the first covariant derivatives
of $h$ with coefficients depending on $\tg$. Since $g$ is parallel (indeed, it is the background metric we are covariant derivatives with respect of) we have
$\tg_{ij;k}=h_{ij;k}$, and $\tg^{ij}_{;k}=-\tg^{il}\tg^{jm}h_{lm;k}$. We may thus rewrite
the expression
\[ \tR-R=\tg^{ij}\tg^{kl}h_{il;jk}-\tg^{ij}\tg^{kl}h_{kl;ij}+(\tg^{ij}-g^{ij})R_{ij}+
q(\tg,h)
\]
where we have modified $q(\tg,h)$. We note that the linear term $L(h)$  of $\tR-R$ is given by
\[ L(h)=[g^{ij}g^{kl}(h_{il;jk}-h_{kl;ij})-g^{ik}g^{jl}h_{kl}]R_{ij}.
\]
Therefore the quadratic part is
\begin{equation}\label{Q-formula1} \tR-R-L(h)=(\tg^{ij}\tg^{kl}-g^{ij}g^{kl})(h_{il;jk}-h_{kl;ij})+T+q(\tg,h)
\end{equation}
where the term $T$ is given by
\[ T=(\tg^{ij}-g^{ij})R_{ij}+g^{ik}g^{jl}h_{kl}R_{ij}.
\]
We can rewrite $T$ by setting $g_t=g+th$ for $t\in [0,1]$, and writing
\[ \tg^{ij}-g^{ij}=\int_0^1\frac{d}{dt}(g_t^{ij})\ d\mathscr{L}^1=-\left(\int_0^1 g_t^{ik}g_t^{jl}\ d\mathscr{L}^1\right)h_{kl}.
\]
Hence we have 
\begin{equation}\label{T-formula} T=-\int_0^1(g_t^{ik}g_t^{jl}-g^{ik}g^{jl})\ d\mathscr{L}^1\ h_{kl}R_{ij}.
\end{equation}
For the trace term, one can write
\[(Tr_{\tg}\tp)^{2}-(Tr_{g}\pi)^{2}=\left(\tg_{ij}\tp^{ij}+g_{ij}\pi^{ij}\right)\left(\tg_{kl}\tp^{kl}-g_{kl}\pi^{kl}\right)
\]
and by the definitions of $\tg$ and $\tp$
\[\tg_{kl}\tp^{kl}-g_{kl}\pi^{kl}=g_{kl}\omega^{kl}+h_{kl}\pi^{kl}+h_{kl}\omega^{kl} , \ \ \tg_{ij}\tp^{ij}+g_{ij}\pi^{ij}=2g_{ij}\pi^{ij}+g_{ij}\omega^{ij}+h_{ij}\pi^{ij}+h_{ij}\omega^{ij}
\]
so the quadratic part is given by
\begin{equation}\label{Q-formula2}
\left(2g_{ij}\pi^{ij}\right)\left(h_{kl}\omega^{kl}\right)+\left(g_{ij}\omega^{ij}+h_{ij}\pi^{ij}+h_{ij}\omega^{ij}\right)\left(g_{kl}\omega^{kl}+h_{kl}\pi^{kl}+h_{kl}\omega^{kl}\right).
\end{equation}
The squared norm terms can be expanded as follows:
\[\left|\tp\right|^{2}_{g}-\left|\pi\right|^{2}_{g}=\tp^{ik}\tp^{jl}\tg_{ij}\tg_{kl}-\pi^{ik}\pi^{jl}g_{ij}g_{kl}=(\pi^{ik}+\omega^{ik})\left(\pi^{jl}+\omega^{jl}\right)\left(g_{ij}+h_{ij}\right)\left(g_{kl}+h_{kl}\right)-\pi^{ik}\pi^{jl}g_{ij}g_{kl}
\]
so it is readily checked that the quadratic part is given by
\begin{equation}\label{Q-formula3}
\pi^{ik}\pi^{jl}h_{ij}h_{kl}+\omega^{ik}\omega^{jl}g_{ij}g_{kl}+\left(\pi^{ik}\omega^{jl}+\pi^{jl}\omega^{ik}+\omega^{ik}\omega^{jl}\right)\left(g_{ij}h_{kl}+g_{kl}h_{ij}+h_{ij}h_{kl}\right).
\end{equation}
Now we observe that from \eqref{Q-formula1}, \eqref{T-formula}, \eqref{Q-formula2} and \eqref{Q-formula3} the term $Q^{(1)}_{(g,\pi)}([h,\omega])$ is a
sum of terms of the form
\[ E_{1}(h)(\partial^2 h),\ E_{2}(h)(h)(\Gamma^2+\partial\Gamma+\pi^{2}), \  E_3(h)(\partial h)(\Gamma)  , \ E_{4}(h)(\partial h)(\partial h), \ E_{5}(h)(\omega)(\omega), \ E_{6}(h)(h)(\omega)(\pi) 
\]
where $E_{1},E_{2},E_{3}, E_{4}, E_{5}, E_{6}$ are smooth coefficient systems depending on $h$ with $E_i(0)=0$ for $i=1,2,3$.

Now, let us concern ourselves with the second component of the constraints. Let us remark that $Div_{g}(\pi)$ is of course linear when the background metric is fixed $(g)$ and we let the differential operator $Div_{g}$ act on the symmetric tensor $\pi$, while $Div_{g}\pi$ is not linear as a function of the couple $(g,\pi)$ and this is the reason why the following computation is not trivial.
Indeed, since we need to deal with two different background metrics ($g$ and $\tg$) we will start by unwinding the covariant derivatives:
\[ \left(Div_{g}\pi\right)^{l}=(\pi^{il}_{,i}+\Gamma^{i}_{ik}\pi^{kl}+\Gamma^{l}_{ik}\pi^{ik})
\]
and thus, for the difference we can write
\begin{align*}
\left(Div_{\tg}\tp\right)^{l} & -\left(Div_{g}\pi\right)^{l}=\op^{il}_{,i}-\pi^{il}_{,i}+\oG^{i}_{ik}\op^{kl}-\G^{i}_{ik}\pi^{kl}+\oG^{l}_{ik}\op^{ki}-\Gamma^{l}_{ik}\pi^{ki} \\
& =(\op^{il}_{,i}-\pi^{il}_{i})+D^{i}_{ik}\op^{kl}+\Gamma^{i}_{ik}(\op^{kl}-\pi^{kl})+D^{l}_{ik}\op^{ki}+\G^{l}_{ik}(\op^{ki}-\pi^{ki}) \\
& =\omega^{il}_{,i}+D^{i}_{ik}(\pi^{kl}+\omega^{kl})+\Gamma^{i}_{ik}\omega^{kl}+D^{l}_{ik}(\pi^{ki}+\omega^{ki})+\Gamma^{l}_{ik}\omega^{ki}.
\end{align*}

As a result, making use of the expression for the difference of Christoffel symbols that has been derived above, namely
\[ D_{ij}^{k}=\frac{1}{2}\left(g^{kl}-\int_{0}^{1}g_{t}^{ak}g_{t}^{bl}h_{ab}\,d\mathscr{L}^1\right)(h_{il;j}+h_{jl;i}-h_{ij;l})
\]
we can express the quadratic part of the second component as as finite sum of terms that belong to one of the following categories:
\[  E_{7}(h)(\partial h)(\pi), \  E_8(h)(h)(\Gamma\pi), \  E_{9}(h)(\partial h)(\omega), \ E_{10}(h)(h)(\omega)(\Gamma) 
\]
where $E_{7}, E_{8}, E_9, E_{10}$ are smooth coefficient systems depending on ($g$ and) $h$, moreover $E_{7}(0)=0$ and $E_8(0)=0$.

These preliminaries being done, we can easily get pointwise upper bounds for the first and second components of $Q_{(g,\pi)}\left[h_{1},\omega_{1}\right]-Q_{(g,\pi)}\left[h_{2},\omega_{2}\right]$.
Concerning the first component, we will have (for small $[h_{1},\omega_{1}]$ and $[h_{2}, \omega_{2}]$ in the $X_{2}$-topology)
\begin{align*} &|Q^{(1)}_{(g,\pi)}[h_1,\omega_{1}]-Q^{(1)}_{(g,\pi)}[h_2,\omega_{2}]|\leq C(|h_1|+|h_2|)|\partial^2(h_1-h_2)| \\ &
+C(|\partial h_1|+|\partial h_2|+|\Gamma|(|h_1|+|h_2|))|\partial (h_1-h_2)|\\ &
+C(|\partial^2 h_1|+|\partial^2 h_2|+|\partial h_1|^2+|\partial h_2|^2+|\Gamma|(|\partial h_1|+|\partial h_2|)) h_1-h_2|\\ &
+C((\left|\partial\Gamma\right|+|\Gamma|^2+\left|\pi\right|^{2})(|h_1|+|h_2|)+\left|\pi\right|\left(\left|\omega_{1}\right|+\left|\omega_{2}\right|\right))) |h_1-h_2| \\
 &+C\left(|\pi|\left(\left|h_{1}\right|+\left|h_{2}\right|\right)+\left(\left|\omega_{1}\right|+\left|\omega_{2}\right|\right)\right)\left|\omega_{1}-\omega_{2}\right|.
\end{align*}
Instead, for the second component, we have:
\begin{align*}
&|Q^{(2)}_{(g,\pi)}[h_{1},\omega_{1}]-Q^{(2)}_{(g,\pi)}[h_2,\omega_{2}]|\leq C\left(\left|\pi\right|\left(\left|h_{1}\right|+\left|h_{2}\right|\right)+\left(\left|\omega_{1}\right|+\left|\omega_{2}\right|\right)\right)\left|\partial \left(h_{1}- h_{2}\right)\right|  \\ &
+C((\left|\pi\right|+|\omega_{1}|+|\omega_{2}|)(\left|\partial h_{1}\right|+\left|\partial h_{2}\right|))+|\Gamma|(|\pi|(h_1|+|h_2|)+(|\omega_1|+|\omega_2|)))\left|h_{1}-h_{2}\right| \\
&+C(\left|\partial h_{1}\right|+\left|\partial h_{2}\right|+|\Gamma|(|h_1|+|h_2|))\left|\omega_{1}-\omega_{2}\right|.
\end{align*}
Since we are assuming that $\|(f_1, V_{1})\|_{1}< r_0$ and $\|(f_2, V_{2})\|_{1}<r_0$, our Schauder estimates (Lemma \ref{pwsch}) imply that for $0\leq j\leq 2$
\[ |\partial^j h_a|\leq Cr_0r^{-p-j}\phi^{N-n/2-2-j}\leq Cr_0r^{-p-j}\phi^{N-n/2-4}
\]
as well as
\[\left|\partial^{j} \omega_{a}\right|\leq Cr_{0}r^{-p-1-j}\phi^{N-n/2-2-j}\leq Cr_0r^{-p-1-j}\phi^{N-n/2-4}
\]
for $a=1,2$.

It follows that, due to our decay assumptions on the data $g-\delta, \pi$ one obtains the pointwise bounds
\[|Q^{(i)}_{(g,\pi)}[h_{1},\omega_{1}]-Q^{(i)}_{(g,\pi)}[h_2,\omega_{2}]|\leq Cr_{0}r^{-2p-2}\phi^{2(N-n/2-4)}\left\|(h_{1}-h_{2}, \omega_{1}-\omega_{2})\right\|_{2}, \ \ x\in\Omega \ \  i=1,2
\]
both for the first and for the second component.
As a result, we have
\[ \int_\Omega |Q^{(i)}_{(g,\pi)}[h_1,\omega_{1}]-Q^{(i)}_{(g,\pi)}[h_2,\omega_{2}]|^2r^{-n+2p+4}\phi^{-2N}\ d\mathscr{L}^n \leq Cr_0^2\left\|(h_{1}-h_{2}, \omega_{1}-\omega_{2})\right\|^{2}_{2} \{\int_\Omega r^{-n-2p}\phi^{2N-2n-16} d\mathscr{L}^n\}.
\]
and thus, since for any $p>0$ and $N$ large enough (say $N>n+8$) the above integral is of course finite, one concludes
\[ \left\|Q_{(g,\pi)}[h_{1},\omega_{1}]-Q_{(g,\pi)}[h_{2},\omega_{2}]\right\|_{\mathcal{H}_{0,-p-2,\rho^{-1}}\times \mathcal{H}_{0,-p-2,\rho^{-1}}}\leq Cr_{0}\left\|(h_{1},\omega_{1})-\left(h_{2}, \omega_{2}\right)\right\|_2
\]
which is the first assertion we had to prove, based on the definition of the norm $\left\|\cdot\right\|_{1}$.

We can then proceed with the pointwise estimates for the H\"older seminorms of $Q^{(1)}_{(g,\pi)}$.

As a preliminary remark which will be used several times in the sequel of this proof, we observe that for two functions $u_1,u_2$ and 
any ball $B\subseteq\Omega$ we have the bound on the H\"older coefficient
\begin{equation}\label{eq:leibhol} [u_1u_2]_{\alpha,B}\leq (\sup_{B}|u_1|)[u_2]_{\alpha,B}+(\sup_B|u_2|)[u_1]_{\alpha,B}. 
\end{equation}
Furthermore, we shall systematically exploit the comparison inequalities \eqref{eq:equiv} and \eqref{eq:equiv2}.
Now given $x\in\Omega$ we let $B=B_{d(x)/2}(x)$, and we estimate the
first type of term
\[[E_1(h_1)(\partial^2 h_1)-E_{1}(h_2)\partial^2 h_2]_{\alpha,B}\leq C\sup_B (|h_1|+|h_2|)[\partial^2 (h_1
-h_2)]_{\alpha,B}+C\sup_B(|\partial^2 h_1|+|\partial^2 h_2|)[h_1-h_2]_{\alpha,B}
\]
\[+C\sup_B|\partial^{2}(h_1-h_2)|([h_1]_{\alpha, B}+[h_2]_{\alpha, B})+C\sup_B(|h_1-h_2|)([\partial^{2}h_1]_{\alpha, B}+[\partial^{2}h_2]_{\alpha, B}).
\]
Using the bounds we have on $h_1$ and $h_2$ we then have
\begin{align*} &[E_1(h_1)(\partial^2 h_1)-E_1(h_2)\partial^2 h_2]_{\alpha,B}\leq Cr_0r(x)^{-p}\phi(x)^{N-n/2-2}(
[\partial^2 (h_1-h_2)]_{\alpha,B}+r(x)^{-2}\phi(x)^{-2}[h_1-h_2]_{\alpha,B}\\
&+r(x)^{-\alpha}\phi^{-\alpha}\sup_B|\partial^{2}(h_1-h_2)|+r(x)^{-2-\alpha}\phi(x)^{-2-\alpha}\sup_B |h_1-h_2|).
\end{align*}
Multiplying by $r(x)^{p+2+\alpha}\phi(x)^{-N+n/2+4+\alpha}$ we have
\begin{align*} &r(x)^{p+2+\alpha}\phi(x)^{-N+n/2+4+\alpha}[E_1(h_1)(\partial^2 h_1)-E_1(h_2)\partial^2 h_2]_{\alpha,B}\leq Cr_0r(x)^{2+\alpha}\phi(x)^{2+\alpha}[\partial^2 (h_1-h_2)]_{\alpha,B}\\
&+Cr_{0}r(x)^\alpha\phi(x)^\alpha[h_1-h_2]_{\alpha,B}+Cr_{0}r(x)^{2}\phi(x)^{2}\sup_B|\partial^{2}(h_1-h_2)|+Cr_{0}\sup_B |h_1-h_2|\\
&\leq Cr_{0}r(x)^{-p}\phi(x)^{N-n/2-2}\left\|h_1-h_2\right\|^{(-p,N-n/2-2)}_{2,\alpha} \leq Cr_0\|(h_1,\omega_1)-(h_2,\omega_2)\|_2
\end{align*}
given our choice of $N$ (recall that we required $N>n+8$).

To handle the second term we estimate 
\begin{align*} & [(E_2(h_1)(h_1)-E_2(h_2)(h_2))(\Gamma^2+\partial\Gamma+\pi^{2})]_{\alpha,B}\leq  Cr(x)^{-p-2}\sup_B|h_1-h_2|([h_1]_{\alpha,B}+[h_2]_{\alpha,B})\\
& +Cr(x)^{-p-2}\sup_B(|h_1|+|h_2|)\{[h_1-h_2]_{\alpha,B}+r(x)^{-\alpha}\phi(x)^{-\alpha}\sup_B |h_1-h_2|\}
\end{align*}
where we have made use of the fact that for $p>0$ one has $2(p+1)>p+2$.
Using the bounds on $h_1$ and $h_2$ and multiplying by $r(x)^{p+2+\alpha}\phi(x)^{-N+n/2+4+\alpha}$ we get
\begin{align*} &r(x)^{p+2+\alpha}\phi(x)^{-N+n/2+4+\alpha}[(E_2(h_1)(h_1)-E_2(h_2)(h_2))(\Gamma^2+\partial\Gamma+\pi^{2})]_{\alpha,B}
\leq Cr_0r(x)^{-p+\alpha}\phi(x)^{2+\alpha}[h_1-h_2]_{\alpha,B}\\
&+Cr_{0}r(x)^{-p}\phi(x)^2\sup_B|h_1-h_2|\leq Cr_{0}r^{-2p}\phi^{N-n/2}\|h_1-h_2\|^{(p,-N+n/2+2)}_{2,\alpha}\leq Cr_0\|(h_1,\omega_1)-(h_2,\omega_2)\|_2.
\end{align*}
Similarly,
\begin{align*} &    [E_3(h)(\partial h)(\Gamma)]_{\alpha,B}\leq Cr(x)^{-2p-1}\phi(x)^{N-n/2-3}([\partial(h_1-h_2)]_{\alpha,B}+r(x)^{-\alpha}\phi(x)^{-\alpha} \sup_{B}|\partial(h_1-h_2)|   \\
& +r(x)^{-1}\phi(x)^{-1}[h_1-h_2]_{\alpha,B}+r(x)^{-1-\alpha}\phi(x)^{-1-\alpha}\sup_B|h_1-h_2|)
\end{align*}
hence
\begin{align*} &  r(x)^{p+2+\alpha}\phi(x)^{-N+n/2+4+\alpha} [E_3(h)(\partial h)(\Gamma)]_{\alpha,B}\leq Cr_0r(x)^{-p+1+\alpha}\phi(x)^{2+\alpha}([\partial(h_1-h_2)]_{\alpha,B} \\
& +r(x)^{-\alpha}\phi(x)^{-\alpha} \sup_{B}|\partial(h_1-h_2)|   +r(x)^{-1}\phi(x)^{-1}[h_1-h_2]_{\alpha,B}+r(x)^{-1-\alpha}\phi(x)^{-1-\alpha}\sup_B|h_1-h_2|)      \\
&\leq Cr_0 \|(h_1,\omega_1)-(h_2,\omega_2)\|_2. 
\end{align*}

To estimate the fourth type of term we follow the same pattern
\begin{align*} &[E_4(h_1)(\partial h_1)(\partial h_1)-E_4(h_2)(\partial h_2)(\partial h_2)]_{\alpha,B}\leq Cr(x)^{-\alpha}\phi(x)^{-\alpha}\sup_B(|\partial h_1|+|\partial h_2|)\sup_{B}|\partial (h_1-h_2)| \\
&+C\left(\sup_B(|\partial h_1|+|\partial h_2|)[\partial (h_1-h_2)]_{\alpha,B}+\sup_B \left|\partial(h_{1}-h_{2})\right|\left([\partial h_{1}]_{\alpha, B}+[\partial h_{2}]_{\alpha, B}\right)\right) \\
&+C\left(\sup_B(|\partial h_1|^2+|\partial h_2|^2)[h_1-h_2]_{\alpha,B}+\sup_{B}|h_{1}-h_{2}|([(\partial h_{1})^{2}]_{\alpha,B}+[(\partial h_{2})^{2}]_{\alpha,B})\right)
\end{align*}
hence
\begin{align*} &r(x)^{p+2+\alpha}\phi(x)^{-N+n/2+4+\alpha}[E_4(h_1)(\partial h_1)(\partial h_1)-E_4(h_2)(\partial h_2)(\partial h_2)]_{\alpha,B}\\ 
&\leq Cr_{0}r(x)\phi(x) (\sup_{B}|\partial(h_1-h_{2})|+r(x)^{\alpha}\phi(x)^{\alpha}[\partial(h_1-h_2)]_{\alpha,B}) \\
& +Cr_{0}^{2}r(x)^{-p}\phi(x)^{N-n/2-2}(\sup_{B}|h_{1}-h_{2}|+ r(x)^{\alpha}\phi(x)^{\alpha}[h_1-h_2]_{\alpha,B})\\
&\leq Cr_{0}r(x)^{-p}\phi(x)^{N-n/2-2}\left\|h_{1}-h_{2}\right\|^{(-p,N-n/2-2)}_{2,\alpha}\leq Cr_{0}\left\|(h_{1},\omega_2)-(h_{2},\omega_2)\right\|_{2}.
\end{align*}
Lastly, we can take of the error terms involving $E_{5}$ and $E_{6}$:
\begin{align*}
&[E_5(h_1)(\omega_1)(\omega_1)-E_5(h_2)(\omega_2)(\omega_2)]_{\alpha,B}\leq r(x)^{-\alpha}\phi(x)^{-\alpha}\sup_B (|\omega_{1}|+|\omega_{2}|)\sup_B(|\omega_1-\omega_2|)\\
& +C\left(\sup_B |\omega_1-\omega_2|\left([\omega_1]_{\alpha,B}+[\omega_2]_{\alpha,B}\right)+\sup_B\left(\left|\omega_1\right|+\left|\omega_2\right|\right)[\omega_1-\omega_2]_{\alpha, B}\right)\\
& +C\left(\sup_B (\left|\omega_1\right|^{2}+\left|\omega_2\right|^{2})[h_1-h_2]_{\alpha, B}+\sup_{B}|h_{1}-h_{2}|([\omega_{1}^{2}]_{\alpha,B}+[\omega_{2}^{2}]_{\alpha,B})\right) 
\end{align*}
and thus
\begin{align*} &r(x)^{p+2+\alpha}\phi(x)^{-N+n/2+4+\alpha}[E_5(h_1)(\omega_1)(\omega_1)-E_5(h_2)(\omega_2)(\omega_2)]_{\alpha,B}\\ 
&\leq Cr_{0}r(x)\phi(x) (\sup_{B}|\omega_1-\omega_2|+r(x)^{\alpha}\phi(x)^{\alpha}[\omega_1-\omega_2]_{\alpha,B})\\
&+Cr_{0}^{2}r(x)^{-p}\phi(x)^{N-n/2}(|h_{1}-h_{2}|+r(x)^{\alpha}\phi(x)^{\alpha}[h_1-h_2]_{\alpha,B}\\
&\leq Cr_{0}r(x)^{-p}\phi(x)^{N-n/2-2}(\left\|h_{1}-h_{2}\right\|^{(-p,N-n/2-2)}_{2,\alpha}+\left\|\omega_{1}-\omega_{2}\right\|^{(-p-1,N-n/2-2)}_{2,\alpha})\\
&\leq Cr_{0}\left\|(h_{1},\omega_1)-(h_{2},\omega_2)\right\|_{2}
\end{align*}
on the one hand, and
\begin{align*}
&[E_6(h_1)(h_1)(\omega_1)(\pi)-E_6(h_2)(h_2)(\omega_2)(\pi)]_{\alpha,B}\\
&\leq Cr(x)^{-p-1-\alpha}\sup_{B}\left(|h_1|+|h_2|\right)(\sup_{B}|\omega_1-\omega_2|+r(x)^{\alpha}\phi(x)^{\alpha}[\omega_1-\omega_2]_{\alpha, B})\\
&+Cr(x)^{-p-1-\alpha}\sup_{B}\left(|\omega_1|+|\omega_2|\right)(\sup_B |h_1-h_2|+r(x)^{\alpha}\phi(x)^{\alpha}[h_1-h_2]_{\alpha, B})\\
&+Cr(x)^{-p-1}\sup_B |h_1-h_2|([\omega_1]_{\alpha, B}+[\omega_2]_{\alpha, B})+Cr(x)^{-p-1}\sup_B |\omega_1-\omega_2|([h_1]_{\alpha, B}+[h_2]_{\alpha, B})
\end{align*}
hence
\begin{align*}
&r(x)^{p+2+\alpha}\phi(x)^{-N+n/2+4+\alpha}[E_6(h_1)(h_1)(\omega_1)(\pi)-E_6(h_2)(h_2)(\omega_2)(\pi)]_{\alpha,B}\\
&\leq Cr_{0}r(x)^{-p+1+\alpha}\phi(x)^{2}(\sup_B |\omega_1-\omega_2|+r(x)^{\alpha}\phi(x)^{\alpha}[\omega_1-\omega_2]_{\alpha,B})\\
&+Cr_{0}r(x)^{-p+\alpha}\phi(x)^{2}(\sup_B |h_1-h_2|+r(x)^{\alpha}\phi(x)^{\alpha}[h_1-h_2]_{\alpha,B})\\
&\leq Cr_{0}r(x)^{-2p+\alpha}\phi(x)^{N-n/2}(\left\|h_1-h_2\right\|^{(-p,N-n/2-2)}_{2,\alpha}+\left\|\omega_1-\omega_2\right\|^{(-p-1,N-n/2-2)}_{2,\alpha})\\
&\leq Cr_0 \left\|(h_1,\omega_1)-(h_2,\omega_2)\right\|
\end{align*}
on the other.

In order to get $\mathcal{C}^{1,\alpha}$ weighted estimates of $Q^{(2)}_{(g,\pi)}$ we first need to differentiate each of the espressions $E_7,E_8,E_9,E_{10}$  we got above. Via elementary, though somewhat lenghty computations, one can use such expressions to write $\partial Q^{(2)}_{(g,\pi)}$ as a sum of terms belonging to one of finitely many types that can be explicitly listed and we omit the tedious details.
Arguing as we did before and making use of the $\mathcal{C}^{2,\alpha}$ estimates on $h_{1}, h_{2}$ as well as $\omega_{1}, \omega_{2}$ one proves the pointwise estimates:
\[|Q^{(2)}_{(g,\pi)}[h_{1},\omega_{1}]-Q^{(2)}_{(g,\pi)}[h_{2},\omega_{2}]|\leq Cr_{0} r^{-2p-2}\phi^{2(N-n/2-4)}\left\|(h_{1}-h_{2},\omega_{1}-\omega_{2})\right\|_{2}
\]
and
\[ |\partial Q^{(2)}_{(g,\pi)}[h_{1},\omega_{1}]-\partial Q^{(2)}_{(g,\pi)}[h_{2},\omega_{2}]|\leq Cr_{0} r^{-2p-3}\phi^{2(N-n/2-5)}\left\|(h_{1}-h_{2},\omega_{1}-\omega_{2})\right\|_{2}
\]
which patently imply
\[ r^{p+2}\phi^{-N+n/2+3}|Q^{(2)}_{(g,\pi)}[h_{1},\omega_{1}]-Q^{(2)}_{(g,\pi)}[h_{2},\omega_{2}]|+r^{p+3}\phi^{-N+n/2+4}|\partial Q^{(2)}_{(g,\pi)}[h_{1},\omega_{1}]-\partial Q^{(2)}_{(g,\pi)}[h_{2},\omega_{2}]|\leq Cr_{0}
\]
as long as $N>n/2-6$.
Similarly, making repeated use of the inequality \eqref{eq:leibhol} one shows that for any point $x\in\Omega$
\[ \left[Q^{(2)}_{(g,\pi)}[h_{1},\omega_{1}]-Q^{(2)}_{(g,\pi)}[h_{2},\omega_{2}]\right]_{\alpha,B_{d(x)/2}}\leq r^{-2p-2-\alpha}\phi^{2(N-n/2-4-\alpha)}\left\|(h_{1}-h_{2},\omega_{1}-\omega_{2})\right\|_{2}
\]
as well as
\[\left[\partial Q^{(2)}_{(g,\pi)}[h_{1},\omega_{1}]-\partial Q^{(2)}_{(g,\pi)}[h_{2},\omega_{2}]\right]_{\alpha, B_{d(x)/2}}\leq r^{-2p-3-\alpha}\phi^{2(N-n/2-5-\alpha)}\left\|(h_{1}-h_{2},\omega_{1}-\omega_{2})\right\|_{2}
\]
so that in the end
\[ \left\|Q^{(2)}_{(g,\pi)}[h_{1},\omega_{1}]-Q^{(2)}_{(g,\pi)}[h_{2},\omega_{2}] \right\|^{(p+2,-N+n/2+3)}_{1,\alpha}\leq Cr_{0}
\]
for some constant $C$ only depending on the ambient dimension.

Combining both our integral and pointwise estimates, we have shown that
\[\left\|\left(Q^{(1)}_{(g,\pi)}[h_{1},\omega_{1}]- Q^{(1)}_{(g,\pi)}[h_{2},\omega_{2}], Q^{(2)}_{(g,\pi)}[h_{1},\omega_{1}]- Q^{(2)}_{(g,\pi)}[h_{2},\omega_{2}]\right)\right\|_{1}\leq Cr_{0}\left\|(h_{1},\omega_{1})-(h_{2},\omega_{2})\right\|_{2}
\]
if $\left\|(h_{1},\omega_{1})\right\|_{1}+\left\|(h_{2},\omega_{2})\right\|_{1}\leq r_{0}$.
Thus, given $\lambda>0$ as small as we need, we simply pick $r_{0}=\lambda/4C$ and this completes the proof. \end{proof}

\subsection{Continuity of the ADM energy-momentum} \label{subs:sobcont}

We shall be concerned here with the relation between the ADM energy-momentum of our initial data $(M,\check{g}, \check{k})$ and those of its localization $(M,\hat{g},\hat{k})$ as in the statement of Theorem \ref{thm:main}. In order to simplify the exposition, we will assume that the manifold $M$ has only one end and yet this restriction is completely unnecessary since our construction happens \textsl{one end at a time}. We know that, by our very gluing scheme, $\hat{g}=\check{g}$ and $\hat{k}=\check{k}$ on the whole region $\Omega_{I}$ so that, in particular, this is true in the interior of a (Euclidean) ball of radius $|a|\sin\theta_{1}$ (resp. $|a|/2$) for $\theta_1\in(0,\pi/2)$ (resp. for $\theta_1\in[\pi/2,\pi)$). (We are assuming the ball in question to be defined by the usual equation, given in asymptotically flat coordinates $\left\{x\right\}$ along the unique end of $M$). As a result, it follows from our construction that for any $\overline{p}\in\left(\frac{n-2}{2},p\right)$
\[ \hat{g}\stackrel{\mathcal{H}_{2,-\overline{p}}}{\rightarrow}\check{g}, \ \ \hat{\pi}\stackrel{\mathcal{H}_{1,-1-\overline{p}}}{\rightarrow}\check{\pi} \ \ \ \textrm{as} \ \ |a|\to\infty.
\]
Notice that the previous statement is true both when one considers the volume measure associated to $\check{g}$ and when one considers the volume measure associated to $\hat{g}$ or in fact any other measure on $M$ which is equivalent to $\mathscr{L}^{n}$ outside a large compact set.
Our target now is to prove that the ADM energy-momentum is continuous with respect to sequential convergence in those topologies. This is probably known to the experts, but not so easily found in the literature, so we give here a detailed argument to fill this gap.

\begin{proposition}\label{pro:ADMcont}
Let $(M,g_{\infty},k_{\infty})$ be an asymptotically flat initial data set with $g_{\infty}\in\mathcal{H}_{2,-p}$ and $k_{\infty}\in\mathcal{H}_{1,-p-1}$ for some $p>\frac{n-2}{2}$. Consider a sequence of data $(M,g_{l},k_{l})$ such that $g_{l}\to g_{\infty}$ in $\mathcal{H}_{2,-p}$ and $k_{l}\to k_{\infty}$ in $\mathcal{H}_{1,-p-1}$ as $l$ tends to infinity. Then 
\[ \lim_{a\to\infty}\mathcal{E}^{(l)}=\mathcal{E}^{(\infty)}, \ \ \lim_{a\to\infty}\mathcal{P}^{(l)}=\mathcal{P}^{(\infty)}.
\] 
Here $\mathcal{E}^{(l)}, \mathcal{P}^{(l)}$ (resp. $\mathcal{E}^{(\infty)}, \mathcal{P}^{(\infty)}$) denote the ADM energy-momentum of $(M,g_{l},k_{l})$ (resp. $(M,g_{\infty},k_{\infty})$).
\end{proposition}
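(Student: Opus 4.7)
The plan is to recast both the ADM energy $\mathcal{E}$ and the ADM momentum $\mathcal{P}$ from their definition as flux integrals at infinity into the sum of a boundary flux over a fixed coordinate sphere $S_R$ plus an absolutely convergent bulk integral over $M \setminus B_R$; continuity with respect to the asserted weighted Sobolev topologies then decomposes into (i) continuity of the boundary flux, which follows from the trace theorem, and (ii) $L^1$-convergence of the bulk integrand, which follows from a routine tail-plus-compact-piece argument once the exponent $p > (n-2)/2$ is correctly tracked.

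For the energy, write $g_{ij} = \delta_{ij} + h_{ij}$ in asymptotically flat coordinates; an elementary computation of the scalar curvature gives the pointwise identity
\[ R_g = \partial_i V^i + Q(g, \partial g), \qquad V^i := \partial_j h_{ij} - \partial_i(\mathrm{tr}\, h), \]
where $Q$ is a quadratic form in the components of $\partial g$ with smooth bounded coefficients in $h$. Because we work with initial data sets, the Hamiltonian constraint $R_g = |k|_g^2 - (\mathrm{tr}_g k)^2$ is in force, so $R_g$ decays like $|k|^2 \sim r^{-2(p+1)}$; the hypothesis $p > (n-2)/2$ then makes both $R_g$ and $Q(g,\partial g)$ belong to $L^1(M)$, since $2(p+1) > n$ is exactly the borderline for Lebesgue-integrability of an $r^{-2(p+1)}$ tail in dimension $n$. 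Integrating $R_g - Q(g,\partial g) = \partial_i V^i$ over the annulus $B_\sigma \setminus B_R$, applying the divergence theorem, and letting $\sigma \to \infty$ yields the bulk-boundary formula
\[ 2(n-1)\omega_{n-1}\,\mathcal{E} = \int_{S_R} V \cdot \nu\, d\mathscr{H}^{n-1} + \int_{M\setminus B_R}\bigl(|k|_g^2 - (\mathrm{tr}_g k)^2 - Q(g,\partial g)\bigr)\, d\mathscr{L}^n, \]
valid for every sufficiently large $R$.

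To prove continuity, fix such an $R$. The boundary term is continuous in $l$ because $g_l \to g_\infty$ in $\mathcal{H}_{2,-p}$ forces $g_l \to g_\infty$ in $H^2$ on the compact shell $B_{2R} \setminus B_{R/2}$, and the standard trace theorem on $S_R$ then gives $\partial g_l \to \partial g_\infty$ in $L^2(S_R)$, which is enough to pass to the limit in the linear flux. For the bulk term choose $R' \gg R$ and split; the key estimate is
\[ \int_{M \setminus B_{R'}}(|k_l|^2 + |\partial g_l|^2)\, d\mathscr{L}^n \leq (R')^{n-2(p+1)}\bigl(\|k_l\|_{\mathcal{H}_{0,-p-1}}^2 + \|\partial g_l\|_{\mathcal{H}_{0,-p-1}}^2\bigr), \]
obtained by pulling the pointwise inequality $r^{n-2(p+1)} \leq (R')^{n-2(p+1)}$ (which holds on $\{r \geq R'\}$ precisely because $n - 2(p+1) < 0$) out of the weighted $L^2$ norm; since the right-hand side is uniformly bounded in $l$ and tends to $0$ as $R' \to \infty$, this disposes of the tails uniformly. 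On the now-compact region $B_{R'} \setminus B_R$ the weighted and ordinary Sobolev norms are equivalent, so $k_l \to k_\infty$ and $\partial g_l \to \partial g_\infty$ in $L^2$; applying Cauchy--Schwarz to the telescoping identities
\[ |k_l|_{g_l}^2 - |k_\infty|_{g_\infty}^2 = (k_l - k_\infty)\ast(k_l+k_\infty) + (g_l - g_\infty)\ast k_l\ast k_\infty, \]
and similarly for $(\mathrm{tr}_g k)^2$ and for $Q$, delivers $L^1$-convergence of the bulk integrand on this region. Combining the two pieces gives $\mathcal{E}^{(l)} \to \mathcal{E}^{(\infty)}$.

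The linear momentum $\mathcal{P}$ is handled identically, this time using the momentum constraint $\mathrm{Div}_g \pi = 0$. Expanding the Euclidean divergence yields $\partial_i \pi^{ij} = -\Gamma \ast \pi$, so the bulk integrand takes the form $\pi \ast \partial g$, whose $L^1$-integrability again reduces to the same exponent count $r^{-2(p+1)}$, and the tail-plus-compact argument applies verbatim. The main technical point is the careful bookkeeping identifying exactly which weighted $L^2$-norm of which quantity needs to be finite; the borderline value $p = (n-2)/2$ is precisely the exponent at which every tail estimate above would fail, and $p > (n-2)/2$ is exactly what makes all the bulk integrals absolutely convergent. Beyond this bookkeeping nothing deeper than the trace theorem and weighted Cauchy--Schwarz enters the argument.
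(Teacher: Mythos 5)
Your overall strategy is essentially the paper's: both arguments trade the flux at infinity for a flux over a fixed large coordinate sphere plus a remainder controlled via the constraint equations and a weighted Cauchy--Schwarz splitting (your estimate $\int_{M\setminus B_{R'}}|k|^2\leq (R')^{n-2(p+1)}\|k\|^2_{\mathcal{H}_{0,-p-1}}$ is exactly the paper's tail bound), and both then pass to the limit in a fixed-radius quantity. The packaging differs slightly: at the fixed sphere you invoke the trace theorem on an annular shell, whereas the paper applies the divergence theorem a second time, over $\{|x|<r'\}$, so that the comparison of the two data sets is a purely \emph{linear} weighted-$L^2$ estimate on second derivatives of $g^{(l)}-g^{(\infty)}$; as a consequence the paper never needs convergence of any quadratic expression on a compact region, while your route additionally requires $L^1$-convergence of the quadratic integrand on $B_{R'}\setminus B_R$ (harmless, since $\partial g_l,\partial^2 g_l, k_l$ converge in $L^2$ there and the coefficients are under control, but it is an extra step).

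The one point that needs repair is the claimed identity $R_g=\partial_iV^i+Q(g,\partial g)$ with $V^i=\partial_j h_{ij}-\partial_i(\mathrm{tr}\,h)$ and $Q$ \emph{quadratic in $\partial g$}: with this choice of $V$ the remainder is not purely first-order, it also contains terms of the schematic form $(g-\delta)\ast\partial^2 g$ (coming from $(g^{ij}g^{kl}-\delta^{ij}\delta^{kl})\partial^2 g$), which is precisely the $|g-\delta|\,|\partial^2 g|$ contribution kept in the paper's remainder $\mathcal{R}$. As written, your displayed tail estimate only controls $|k_l|^2+|\partial g_l|^2$ and therefore misses this term, so the bulk integral you need is not yet shown to be absolutely convergent nor to have uniformly small tails. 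The fix is routine and uses only the norms you already have: bound $\int_{M\setminus B_{R'}}|h|\,|\partial^2 g|\leq (R')^{n-2p-2}\|h\|_{\mathcal{H}_{0,-p}}\|\partial^2 g\|_{\mathcal{H}_{0,-p-2}}$ by the same weight-splitting (note $n-2p-2<0$), and on the compact annulus use $h_l\to h_\infty$ and $\partial^2 g_l\to\partial^2 g_\infty$ in $L^2$ to get $L^1$-convergence of the mixed term. (Alternatively, one can use the $\Gamma\ast\Gamma$ form of the scalar curvature identity, in which the remainder genuinely is quadratic in $\partial g$, at the cost of modifying $V$ by an $h\ast\partial g$ term that does not affect the ADM limit and is still handled by the trace argument.) With this correction, and the analogous bookkeeping for the momentum, your proof is complete and equivalent in substance to the paper's.
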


To avoid dangerous ambiguities, let us remark that (by our definition of initial data sets, see Subsection \ref{subs:ids}) we are assuming both $(M,g_{\infty},k_{\infty})$ and each $(M,g_{l},k_{l})$ to solve the Einstein constraint equations. For the sake of simplicity, we will deal here with the \textsl{vacuum} case, even though an identical proof can be given for the general constraints under the usual integrability assumption on $\mu$ and $J$.

\begin{proof}
Let us first consider the energy functional. Given a large radius $r'$ we start by claiming that for any asymptotically flat metric $g$ as in the statement of our proposition
\[\left|\mathcal{E}-\frac{1}{2(n-1)\omega_{n-1}}\int_{|x|=r'}\sum_{i,j=1}^{n}(g_{ij,i}-g_{ii,j})\nu_{0}^{j}\,d\mathscr{H}^{n-1}\right|\leq C(r')^{2n-4-4p}
\]
(for some constant $C$ which is uniform in a given $\mathcal{H}_{2,-p}\times\mathcal{H}_{1,-p-1}$ neighbourhood of $(g,k)$) and it is clear that once this is justified, it is enough to prove the continuity (in our topology) of the approximating hypersurface integral at \textsl{fixed} radius $r'$.
To obtain the claim, let us pick a second, larger radius $r''>r'$ and notice that 
\[ \int_{|x|=r''}\sum_{i,j=1}^{n}(g_{ij,i}-g_{ii,j})\nu_{0}^{j}\,d\mathscr{H}^{n-1}-\int_{|x|=r'}\sum_{i,j=1}^{n}(g_{ij,i}-g_{ii,j})\nu_{0}^{j}\,d\mathscr{H}^{n-1}=\int_{B_{r''}\setminus B_{r'}}(R_g+\mathcal{R})\,d\mathscr{L}^{n}
\]
(with a remainder term satisfying $|\mathcal{R}|\leq C(|g-\delta||\partial^{2}g|+|\partial g|^{2})$) by the divergence theorem, hence by the first constraint equation (the Hamiltonian constraint) that integral is upper bounded by (modulo a dimensional constant)
\[ \int_{M\setminus B_{r'}}(|g-\delta||\partial^{2}g|+|\partial g|^{2}+|\pi|^{2})\,d\mathscr{L}^{n}.
\]
The terms $|g-\delta||\partial^{2}g|, \ |\partial g|^{2}$ and $|\pi|^{2}$ have, under our assumptions, roughly the same rate of decay (of order $|x|^{-2(p+1)}$, in integral sense) so we will just consider the latter, the treatment for the former two being identical.
We can write:
\[\int_{M\setminus B_{r'}}|\pi|^{2}\,d\mathscr{L}^{n}\leq \left(\int_{M\setminus B_{r'}}(|\pi|r^{1+p})^{2}r^{-n}\,d\mathscr{L}^{n}\right)^{1/2}\left(\int_{M\setminus B_{r'}}(|\pi|r^{n-1-p})^{2}r^{-n}\,d\mathscr{L}^{n}\right)^{1/2}
\]
and in turn
\[ \int_{M\setminus B_{r'}}(|\pi|r^{n-1-p})^{2}r^{-n}\,d\mathscr{L}^{n}\leq \sup_{r>r'} r^{2n-4-4p} \times \left(\int_{M\setminus B_{r'}}|\pi|^{2}r^{-n+2p+2}\,d\mathscr{L}^{n}\right)
\] which completes the proof of our claim.
Therefore, for any given $\varepsilon>0$ let us fix a reference radius $r'$ so that for each metric $g_{i}$ as well as for $g_{\infty}$ the ADM energy is appromixated by the integral over the hypersphere of radius $r'$ (considering the measure $\mathscr{H}^{n-1}$) with an error less that $\varepsilon/2$.  
Hence, applying the divergence theorem once again
\begin{align*} & \int_{|x|=r'}\sum_{i,j=1}^{n}(g^{(\infty)}_{ij,i}-g^{(\infty)}_{ii,j})\nu_{0}^{j}\,d\mathscr{H}^{n-1}-\int_{|x|=r'}\sum_{i,j=1}^{n}(g^{(l)}_{ij,i}-g^{(l)}_{ii,j})\nu_{0}^{j}\,d\mathscr{H}^{n-1}\\
& =\int_{|x|<r'}(g^{(\infty)}_{ij,ij}-g^{(\infty)}_{ii,jj})\,d\mathscr{L}^{n}-\int_{|x|<r'}(g^{(l)}_{ij,ij}-g^{(l)}_{ii,jj})\,d\mathscr{L}^{n} \\
\end{align*}
thus
\begin{align*}
& \left|\int_{|x|=r'}\sum_{i,j=1}^{n}(g^{(\infty)}_{ij,i}-g^{(\infty)}_{ii,j})\nu_{0}^{j}\,d\mathscr{H}^{n-1}-\int_{|x|=r'}\sum_{i,j=1}^{n}(g^{(l)}_{ij,i}-g^{(l)}_{ii,j})\nu_{0}^{j}\,d\mathscr{H}^{n-1}\right| \\
& \leq C \left(\int_{M}\left|g^{(\infty)}_{ij,ij}-g^{(l)}_{ij,ij}\right|^{2}r^{-n+2p+4}\,d\mathscr{L}^{n}\right)^{1/2}\left(\int_{|x|<r'}r^{n-2p-4}\,d\mathscr{L}^{n}\right)^{1/2} \\
&+C\left(\int_{M}\left|g^{(\infty)}_{ii,jj}-g^{(l)}_{ii,jj}\right|^{2}r^{-n+2p+4}\,d\mathscr{L}^{n}\right)^{1/2}\left(\int_{|x|<r'}r^{n-2p-4}\,d\mathscr{L}^{n}\right)^{1/2}
\end{align*}
By our convergence assumption, we can now pick an index $l_{0}$ so that for $l>l_{0}$ each of those two summands is less than $\varepsilon/2$ so that in the end $|\mathcal{E}_{\infty}-\mathcal{E}_{l}|<\varepsilon$. Therefore, since $\varepsilon$ can be chosen arbitrarily small we conclude that the energy functional is continuous in these weighted Sobolev spaces. The proof of the continuity of the ADM linear momentum follows along the same lines, so we will only sketch it. First of all, in order to show that
\[\left|\mathcal{P}_i-\frac{1}{(n-1)\omega_{n-1}}\int_{|x|=r'}\sum_{j=1}^{n}\pi_{ij}\nu_{0}^{j}\,d\mathscr{H}^{n-1}\right|\leq C(r')^{2n-4-4p}
\]
one applies the divergence theorem observing that
\[
 \sum_j\pi_{ij,i}=\sum_j(\pi_{ij;j}-\pi_{ij,j})\leq C |\partial g||\pi|
 \]
by virtue of the second constraint equation (the vector constraint) and the fact that, in local coordinates 
\[ \pi_{ij;i}=\pi_{ij,j}-\Gamma^{k}_{jj}\pi_{ik}-\Gamma^{k}_{ij}\pi_{jk}.
\]

Since $|\partial g||\pi|\leq |\partial g|^{2}+|\pi|^{2}$ the claim follows by the argument we have already presented because of course we only need to deal with terms of order zero or one. At that stage, the comparison happens at the level of a fixed hypersphere and the convergence assumption can be used in a straightforward fashion. This concludes the proof. 
\end{proof}

As a result, we can immediately deduce the surprising fact that our localization construction gives an arbitrarily good approximation of the ADM energy-momentum of the given initial data, no matter how small the angles $\theta_{1}$ and $\theta_{2}$ may potentially be.

\begin{corollary}
Let $(M,\check{g},\check{k})$, $(M,\hat{g},\hat{k})$ be as in the statement of Theorem \ref{thm:main}. Then
\[ \lim_{a\to\infty} \hat{\mathcal{E}}=\check{\mathcal{E}}, \ \ \lim_{a\to\infty}\hat{\mathcal{P}}=\check{\mathcal{P}}.
\]
Here $\hat{\mathcal{E}}, \hat{\mathcal{P}}$ (resp. $\check{\mathcal{E}}, \check{\mathcal{P}}$) denote the ADM energy-momentum of $(M,\hat{g},\hat{k})$ (resp. $(M,\check{g},\check{k})$).
\end{corollary}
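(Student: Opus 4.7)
The plan is to deduce the corollary directly from Proposition \ref{pro:ADMcont} by verifying that the localized data converge to the original data in the singly weighted Sobolev topology appearing in the hypothesis of that proposition. Given any sequence $|a_{l}|\to\infty$, I would set $(g_{l},k_{l}):=(\hat{g}_{a_{l}},\hat{k}_{a_{l}})$ and $(g_{\infty},k_{\infty}):=(\check{g},\check{k})$, and aim to show that $g_{l}\to g_{\infty}$ in $\mathcal{H}_{2,-\overline{p}}$ and $k_{l}\to k_{\infty}$ in $\mathcal{H}_{1,-\overline{p}-1}$ for some auxiliary exponent $\overline{p}\in(\frac{n-2}{2},p)$, exactly as already announced in the paragraph preceding Proposition \ref{pro:ADMcont}.

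First, I would decompose $M=\Omega_{I}(a_{l})\cup\Omega(a_{l})\cup\Omega_{O}(a_{l})$. On $\Omega_{I}(a_{l})$ the localized data agree with $(\check{g},\check{k})$ by construction, contributing zero to the difference. On $\Omega_{O}(a_{l})$ we have $g_{l}-g_{\infty}=\delta-\check{g}$; since $\check{g}-\delta\in\mathcal{H}_{2,-\check{p}}$ and $\Omega_{O}(a_{l})$ is contained in the complement of a ball of radius comparable to $|a_{l}|\sin\theta_{1}$ (with the obvious modification when $\theta_{1}\geq\pi/2$), dominated convergence against the weight $r^{-n+2(i+\overline{p})}$ forces $\|\check{g}-\delta\|_{\mathcal{H}_{2,-\overline{p}}(\Omega_{O}(a_{l}))}\to 0$ as soon as $\overline{p}<\check{p}$. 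The analogous assertion for $\check{k}$ handles the momentum component.

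Next, on the transition region $\Omega(a_{l})$ I would split $g_{l}-g_{\infty}=(g_{l}-g_{\text{rough}})+(g_{\text{rough}}-\check{g})$, where $g_{\text{rough}}$ is the rough patch of Subsection \ref{subs:doubly}. The second summand is dominated by $\|\check{g}-\delta\|_{\mathcal{H}_{2,-\overline{p}}(\Omega(a_{l}))}$ by the same tail argument, since $g_{\text{rough}}$ coincides with $\check{g}$ or $\delta$ on the two sides of the angular cut-off. The first summand is exactly the deformation $h$ produced by Theorem \ref{picard}, and Remark \ref{rem:small} guarantees $\|(h,\omega)\|_{X_{2}}\to 0$ as $|a|\to\infty$. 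It remains to observe that $\|\cdot\|_{2}$ dominates (a constant times) $\|\cdot\|_{\mathcal{H}_{2,-\overline{p}}(\Omega)}\times \|\cdot\|_{\mathcal{H}_{1,-\overline{p}-1}(\Omega)}$: indeed, the pointwise bounds $|\partial^{j}h|\lesssim \|h\|^{(-p,N-n/2-2)}_{2,\alpha}\,r^{-p-j}\phi^{N-n/2-2-j}$ contained in $\|\cdot\|_{2}$, when integrated against $r^{-n+2(j+\overline{p})}$ over $\Omega$, give a finite integral provided $\overline{p}<p$ (for integrability along the axial direction of the cone) and $N$ is large enough for the angular factor to be integrable up to $\partial\Omega$. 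The same estimate, with the shift in decay exponent, transfers to $\omega$.

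The hard part is merely parameter bookkeeping: $\overline{p}$ must be chosen strictly above $\frac{n-2}{2}$ (so that Proposition \ref{pro:ADMcont} applies), strictly below $\min(p,\check{p})$ (so that the tail of the original data and the deformation term are both controlled), with $N$ chosen large enough in terms of $n$. All of these constraints are simultaneously satisfiable under the hypotheses of Theorem \ref{thm:main}. Once the convergence $(g_{l},k_{l})\to(g_{\infty},k_{\infty})$ is established in the required topology, Proposition \ref{pro:ADMcont} yields $\hat{\mathcal{E}}\to\check{\mathcal{E}}$ and $\hat{\mathcal{P}}\to\check{\mathcal{P}}$; the passage between $\pi$ and $k$ via $k=\pi-\frac{1}{n-1}(Tr_{g}\pi)g$ is continuous in the relevant weighted spaces precisely because $g_{l}-\delta$ is uniformly small in $\mathcal{H}_{2,-\overline{p}}$.
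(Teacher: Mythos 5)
Your overall route is exactly the paper's: the corollary is obtained by feeding the convergence $\hat{g}\to\check{g}$ in $\mathcal{H}_{2,-\overline{p}}$, $\hat{\pi}\to\check{\pi}$ in $\mathcal{H}_{1,-\overline{p}-1}$ (asserted at the start of Subsection \ref{subs:sobcont}) into Proposition \ref{pro:ADMcont}. The paper takes that convergence as a consequence of the construction; you attempt to verify it, and it is in that verification that two steps do not close as written. First, on the transition region you claim $g_{\mathrm{rough}}-\check{g}=(1-\chi)(\delta-\check{g})$ is ``dominated by the tail'' of $\|\check{g}-\delta\|_{\mathcal{H}_{2,-\overline{p}}}$; this ignores the terms where derivatives fall on the cutoff $\chi$. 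The weight required by Proposition \ref{pro:ADMcont} is the fixed radial function of the end of $M$, which on $\Omega$ is comparable to $|a|+r$, not to $r$ (cf. \eqref{eq:equiv}), and the exponent it carries on second derivatives, $4-n+2\overline{p}$, is positive. Near the regularized vertex $\chi$ varies on unit scale at distance $\sim|a|$ from the end, so the unit ball about the vertex alone contributes roughly $|a|^{-2\check{p}}\cdot|a|^{4-n+2\overline{p}}=|a|^{4-n+2(\overline{p}-\check{p})}$ to the squared norm; for $n=3$, where $\check{p}\le 1$ and $\overline{p}>1/2$, this exponent is positive, so the term is not small and the domination-by-tail argument fails in the most relevant dimension. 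Second, for the correction you integrate the $X_{2}$ pointwise bounds against $r^{-n+2(j+\overline{p})}$ with $r$ centered at the moving vertex $a$: this controls an $r$-weighted norm, not the end-weighted one needed for Proposition \ref{pro:ADMcont}. Converting on the portion of $\Omega$ with $r\lesssim|a|$ costs a factor $|a|^{4-n+2\overline{p}}>|a|^{2}$ on the top-order term, and Remark \ref{rem:small} gives no decay rate for $\|(h,\omega)\|_{2}$ in $|a|$, so the smallness does not transfer automatically. (Your treatment of $\Omega_{I}$ and $\Omega_{O}$, and the final remark on passing from $\pi$ to $k$, are fine.)

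The corollary itself is not in doubt, but along these lines you should either quantify $\|(h,\omega)\|_{2}$ in terms of $|a|$ and redo the weighted estimates with the end-based weight honestly, or bypass global norm convergence and feed into the proof of Proposition \ref{pro:ADMcont} only what it actually uses: (i) for each fixed radius $r'$, $(\hat{g},\hat{k})=(\check{g},\check{k})$ on $B_{r'}$ once $|a|$ is large, so the approximating flux integrals coincide; and (ii) a tail bound $\int_{M\setminus B_{r'}}\bigl(|\hat{g}-\delta||\partial^{2}\hat{g}|+|\partial\hat{g}|^{2}+|\hat{\pi}|^{2}\bigr)\,d\mathscr{L}^{n}$ small uniformly in large $|a|$, which can be checked directly from the construction since this integral carries no positive power of the weight (the vertex block contributes only $O(|a|^{-2\check{p}})$ there). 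As written, the key convergence statement --- precisely the point the paper leaves unproved --- is not established by your argument.
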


\section{A new class of $N-$body solutions}\label{sec:Nbody}

As anticipated in the Introduction, we would like to devote this section to a discussion about the applicability of our gluing methods to the construction of a new class of $N$-body initial data sets for the Einstein constraint equations. In the context of Newtonian gravity, a set of initial data for the evolution of $N$ massive bodies can be obtained by solving a single Poisson equation in the complement of a finite number of compact domains (say balls) in $\R^{3}$, at least if the interior structure and dynamics of the bodies in question is neglected. The nonlinearity of the Einstein constraint equations makes such a task a lot harder and in fact it was only in the last decade that sufficiently general results in this direction, without restricting to rather symmetric configurations \cite{CD03} or exploiting the presence of multiple ends \cite{CIP05}, have been obtained \cite{CCI09, CCI11}.

\

Let us suppose that a finite collection of $N$ asymptotically flat data sets $(M_{1}, g_{1}, k_{1}), \ldots, (M_{N}, g_{N}, k_{N})$ is assigned and let $U_{i}$ denote a compact, regular subdomain of $M_{i}$ for each value of the index $i$. Moreover, let $x_{1},\ldots, x_{N}\in \R^{n}$ be $N$ vectors which are supposed to prescribe the location of the regions $U_{1}, \ldots, U_{k}$ with respect to a fiducial flat background. Then, the problem we want to address is the construction of a triple $(M,g,k)$ which:
\begin{itemize}
\item{is a solution of the vacuum Einstein constraint equations $\Phi(g,k)=0$;}
\item{contains $N$ regions that are isometric to the given bodies;}
\item{has the centers of such bodies in a configuration which is a scaled version of the chosen configuration.} 
\end{itemize}
\

Making use of Theorem \ref{thm:main}, we can glue together any finite number of conical regions for given data $(M_{i}, g_{i}, k_{i})$. In order to state this result, let us introduce some notation. Given $n\geq 3$, $a\in \R^{n}$, $\theta\in(0,\pi)$ and $\varepsilon>0$ very small we let $\Gamma_{\theta}\left(a\right)$ (resp. $\Omega_{\theta,\varepsilon}\left(a\right)$) be the regularized conical region obtained from $C_{\theta}(a)$ (resp. $C_{\theta}(a)\setminus C_{\left(1-\varepsilon\right)\theta}\left(a\right)$) by removing the singularity at the tip and smoothly gluing $B_{1/2}\left(a\right)$ as explained in the Subsection \ref{subs:regcon}. In addition, we set 
\[\hat{\Gamma}_{\theta,\varepsilon}\left(a\right)=\Gamma_{\theta}\left(a\right)-(1+\varepsilon)a, \ \
 \hat{\Omega}_{\theta,\varepsilon}\left(a\right)=\Omega_{\theta, \varepsilon}\left(a\right)-(1+\varepsilon)a
 \]    
that are translated copies of the smoothened regions. We remark that for any given $\varepsilon>0$ if $a, a'$ are large enough we have that $\left[C_{\theta}(a)-a\right]\cap \left[C_{\theta'}(a')-a'\right]=\emptyset$ implies $\hat{\Omega}_{\theta,\varepsilon}\left(a\right)\cap \hat{\Omega}_{\theta',\varepsilon}\left(a'\right)=\emptyset$ as well.

Finally, given two vectors $v_{1}, v_{2}\in \mathbb{S}^{n-1}$ let $\varphi(v_{1}, v_{2})\in\left[0,\pi\right]$ be the angle between them. This statement is a remarkable consequence of our gluing Theorem \ref{thm:main}.

\begin{theorem}\label{thm:Nbody}
Given $\sigma>0, 0<\varepsilon<1/2$ and a collection of initial data $(M_{1}, g_{1}, k_{1}), \ldots, (M_{N}, g_{N}, k_{N})$ (as in Subsection \ref{subs:ids}) satisfying the vacuum Einstein constraint equations one can find $\Lambda>0$ such that: assigned vectors $a_{1},\ldots, a_{N}\in\R^{n}$ and angles $\theta_{1}, \ldots,\theta_{N}$ satisfying $\left|a_{i}\right|>\Lambda$ and $\varphi(a_{i}, a_{j})>(1-\varepsilon)^{-1}(\theta_{i}+\theta_{j})$ (for any choice of indices $i, j$) then there exists an asymptotically Euclidean triple $(M,g, k)$ satisfying the vacuum Einstein constraint equations, where $M$ has exactly one end, such that $g$ (resp. $k$) on $\hat{\Gamma}_{\theta_{i},\varepsilon}\left(a_{i}\right)$ coincides with $g_{i}$ (resp. $k=k_{i}$) on $\Gamma_{\theta_{i},\varepsilon}\left(a_{i}\right)\subset M_i$ and 
\[ \left|\mathcal{E}-\sum_{i=1}^{N}\mathcal{E}^{(i)}\right|\leq\sigma, \ \ \left|\mathcal{P}-\sum_{i=1}^{N}\mathcal{P}^{(i)}\right|\leq\sigma.
\]
\end{theorem}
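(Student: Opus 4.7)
The idea is to apply Theorem \ref{thm:main} to each body individually and then assemble the resulting localized data sets on a single copy of $\R^n$, exploiting the fact that each individual output is exactly Euclidean outside its outer cone.

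For each $i$, pick an intermediate outer angle $\theta_i^{+}:=(1-\varepsilon/2)^{-1}\theta_i$, so that $\theta_i<\theta_i^{+}<(1-\varepsilon)^{-1}\theta_i$, and apply Theorem \ref{thm:main} to $(M_i, g_i, k_i)$ with vertex $a_i$, inner angle $\theta_i$, and outer angle $\theta_i^{+}$; for $|a_i|$ sufficiently large this yields a vacuum solution $(M_i, \hat g_i, \hat k_i)$ equal to $(g_i, k_i)$ on $\Gamma_{\theta_i}(a_i)$ and to $(\delta, 0)$ outside $C_{\theta_i^{+}}(a_i)$. Now translate each $(M_i, \hat g_i, \hat k_i)$ by $-(1+\varepsilon)a_i$. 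The hypothesis on the $\varphi(a_i, a_j)$ gives $\theta_i^{+}+\theta_j^{+}=(1-\varepsilon/2)^{-1}(\theta_i+\theta_j)<\varphi(a_i, a_j)$ for every $i\neq j$, and the same argument as the remark preceding the theorem statement then shows that the translated outer conical regions $\hat{\Gamma}_{\theta_i^{+},\varepsilon}(a_i)\subset\R^n$ are pairwise disjoint provided $\Lambda$ is chosen large enough.

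Construct $M$ by taking a copy of $\R^n$, excising each $\hat{\Gamma}_{\theta_i^{+},\varepsilon}(a_i)$, and gluing in its place the corresponding translated piece of $M_i$ (which contains the compact core attached to $\R^n$ via the cone); the identification along each outer cone wall is canonical because both sides are exactly $(\delta, 0)$ there by construction. The result is an asymptotically flat manifold with exactly one end, on which the assembled data $(g, k)$ restricts to $(\hat g_i, \hat k_i)$ inside each translated cone and to $(\delta, 0)$ elsewhere. The vacuum constraints hold on each piece, hence globally, since neighbouring pieces overlap only in a Euclidean region. In particular $(g, k)$ coincides with $(g_i, k_i)$ on $\hat{\Gamma}_{\theta_i,\varepsilon}(a_i)$ as required.

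Finally, for the ADM estimates, the corollary of Proposition \ref{pro:ADMcont} applied to each body ensures, after a further enlargement of $\Lambda$, that $|\hat{\mathcal{E}}^{(i)}-\mathcal{E}^{(i)}|+|\hat{\mathcal{P}}^{(i)}-\mathcal{P}^{(i)}|<\sigma/(2N)$ for every $i$. Computing the ADM of $M$ via flux integrals on large coordinate spheres $S_R\subset\R^n$, the integrand vanishes wherever $g=\delta$, i.e., outside $\bigcup_i \hat{\Gamma}_{\theta_i^{+},\varepsilon}(a_i)$. Since the translated cones are pairwise disjoint, the flux splits as $\sum_i F_i(R)$ with $F_i(R)$ supported on $S_R\cap\hat{\Gamma}_{\theta_i^{+},\varepsilon}(a_i)$; a divergence-theorem argument inside each cone, combined with the vacuum constraint and the bulk estimates established in the proof of Proposition \ref{pro:ADMcont}, identifies $\lim_{R\to\infty} F_i(R)$ with the ADM energy-momentum of $(M_i, \hat g_i, \hat k_i)$. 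Summing over $i$ and invoking the triangle inequality yields the desired $\sigma$-bound. The main obstacle is precisely this flux decomposition: the boundary terms produced by integrating by parts must be controlled so as not to couple contributions from distinct cones, and this is where the strict separation $\varphi(a_i,a_j)>(1-\varepsilon)^{-1}(\theta_i+\theta_j)$, which creates a genuine Euclidean buffer between adjacent cones, plays its essential role.
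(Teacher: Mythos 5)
Your proposal is correct and follows essentially the same route as the paper: apply Theorem \ref{thm:main} once per body with a slightly enlarged outer angle still compatible with the separation hypothesis, translate, assemble over a single copy of $\mathbb{R}^{n}$ using the disjointness of the outer cones and the exactly Euclidean buffer, and control the ADM quantities via the continuity corollary of Proposition \ref{pro:ADMcont}. The only difference is cosmetic: where the paper simply notes that the energy-momentum contributions add up exactly because the common background is $(\mathbb{R}^{n},\delta,0)$, you spell out the flux decomposition over the disjoint cones, which is a slightly more explicit rendering of the same observation.
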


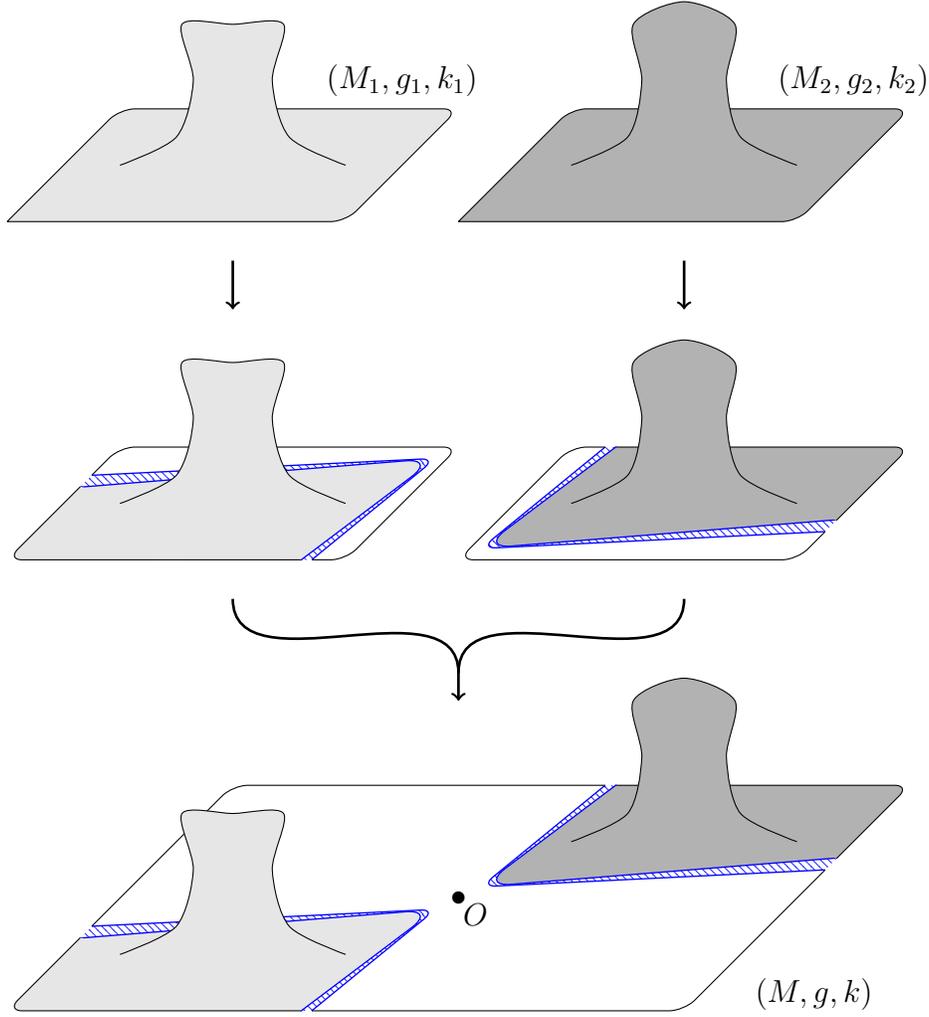
\begin{figure}[ht!]

	\begin{tikzpicture}
	\begin{scope}[scale=0.75]
	
	\begin{scope}[shift={(0, 6)}]
	\begin{scope}[shift={(-4, 0)}]
	\fill [black!10, draw=black, rounded corners=2mm] plot coordinates {(-2, -1) (0, 1) (6, 1) (4, -1) (-2, -1)};
	\fill [black!10, draw=black] plot [smooth] coordinates {(0, 0) (1, 0.5) (1.3, 1.5) (1.1, 2.5) (2, 2.5) (2.9, 2.5) (2.7, 1.5) (3, 0.5) (4, 0)};
	\node at (5, 1.5) {$(M_1, g_1, k_1)$};
	\end{scope}
	
	\begin{scope}[shift={(4, 0)}]
	\fill [black!30, draw=black, rounded corners=2mm] plot coordinates {(-2, -1) (0, 1) (6, 1) (4, -1) (-2, -1)};
	\fill [black!30, draw=black] plot [smooth] coordinates {(0, 0) (1, 0.5) (1.25, 1.5) (1.1, 2.5) (2, 2.9) (2.9, 2.5) (2.75, 1.5) (3, 0.5) (4, 0)};
	\node at (5, 1.5) {$(M_2, g_2, k_2)$};
	\end{scope}
	
	\node at (-2, -1.5) (M1) {};
	\node at (6, -1.5) (M2) {};
	\node at (-2, -2.75) (M1tempup) {};
	\node at (6, -2.75) (M2tempup) {};
	\path[every node/.style={font=\sffamily\small}, ->, line width=1pt] (M1) edge [out=270, in=90] (M1tempup);
	\path[every node/.style={font=\sffamily\small}, ->, line width=1pt] (M2) edge [out=270, in=90] (M2tempup);
	
	\end{scope}
	
	\begin{scope}[shift={(-4, 0)}]
	\fill [pattern color=blue!80, pattern=north west lines, draw=blue, line width=0.5pt, rounded corners=2mm] plot coordinates {(-0.5, 0.5) (5.6, 0.8) (3.4, -1)};
	\fill [pattern color=blue!80, pattern=north west lines, draw=none, rounded corners=2mm] plot coordinates {(3.4, -1) (-2, -1) (-0.5, 0.5)};
	\fill [black!10, draw=blue, line width=0.5pt, rounded corners=4mm] plot coordinates {(-0.7, 0.29) (5.6, 0.8) (3.2, -1.01)};
	\fill [black!10, draw=black, rounded corners=2mm] plot coordinates {(3.2, -1) (-2, -1) (-0.7, 0.3)};
	\draw [black, rounded corners=2mm] plot coordinates {(-0.5, 0.5) (0, 1) (6, 1) (4, -1) (3.4, -1)};
	\fill [black!10, draw=black] plot [smooth] coordinates {(0, 0) (1, 0.5) (1.3, 1.5) (1.1, 2.5) (2, 2.5) (2.9, 2.5) (2.7, 1.5) (3, 0.5) (4, 0)};
	\end{scope}
	
	\begin{scope}[shift={(4, 0)}]
	\fill [pattern color=blue!80, pattern=north west lines, draw=blue, line width=0.5pt, rounded corners=2mm] plot coordinates {(0.6, 1) (-1.6, -0.8) (4.5, -0.5)};
	\fill [pattern color=blue!80, pattern=north west lines, draw=none, rounded corners=2mm] plot coordinates {(4.5, -0.5) (6, 1) (0.6, 1)};
	\fill [black!30, draw=blue, line width=0.5pt, rounded corners=4mm] plot coordinates {(0.8, 1.01) (-1.6, -0.8) (4.7, -0.29)};
	\fill [black!30, draw=black, rounded corners=2mm] plot coordinates {(4.7, -0.3) (6, 1) (0.8, 1)};
	\draw [black, rounded corners=2mm] plot coordinates {(0.6, 1) (0, 1) (-2, -1) (4, -1) (4.5, -0.5)};
	\fill [black!30, draw=black] plot [smooth] coordinates {(0, 0) (1, 0.5) (1.25, 1.5) (1.1, 2.5) (2, 2.9) (2.9, 2.5) (2.75, 1.5) (3, 0.5) (4, 0)};
	\end{scope}
	
	\node at (-2, -1.5) (M1temp) {};
	\node at (6, -1.5) (M2temp) {};
	\node at (2, -3.2) (M) {};
	\path[every node/.style={font=\sffamily\small}, line width=1pt] (M1temp) edge [out=270, in=90] (M);
	\path[every node/.style={font=\sffamily\small}, line width=1pt] (M2temp) edge [out=270, in=90] (M);
	\draw [black, ->, line width=1pt] (2, -3) -- (2, -3.5);
	
	\begin{scope}[shift={(0, -7)}]
	\begin{scope}[shift={(-4, -1)}]
	\fill [pattern color=blue!80, pattern=north west lines, draw=blue, line width=0.5pt, rounded corners=2mm] plot coordinates {(-0.5, 0.5) (5.6, 0.8) (3.4, -1)};
	\fill [pattern color=blue!80, pattern=north west lines, draw=none, rounded corners=2mm] plot coordinates {(3.4, -1) (-2, -1) (-0.5, 0.5)};
	\fill [black!10, draw=blue, line width=0.5pt, rounded corners=4mm] plot coordinates {(-0.7, 0.29) (5.6, 0.8) (3.2, -1.01)};
	\fill [black!10, draw=black, rounded corners=2mm] plot coordinates {(3.2, -1) (-2, -1) (-0.7, 0.3)};
	\end{scope}
	
	\begin{scope}[shift={(4, 1)}]
	\fill [pattern color=blue!80, pattern=north west lines, draw=blue, line width=0.5pt, rounded corners=2mm] plot coordinates {(0.6, 1) (-1.6, -0.8) (4.5, -0.5)};
	\fill [pattern color=blue!80, pattern=north west lines, draw=none, rounded corners=2mm] plot coordinates {(4.5, -0.5) (6, 1) (0.6, 1)};
	\fill [black!30, draw=blue, line width=0.5pt, rounded corners=4mm] plot coordinates {(0.8, 1.01) (-1.6, -0.8) (4.7, -0.29)};
	\fill [black!30, draw=black, rounded corners=2mm] plot coordinates {(4.7, -0.3) (6, 1) (0.8, 1)};
	\node at (4.3, -2.7) {$(M, g, k)$};
	\end{scope}
	
	\node at (2, 0) {$\bullet$};
	\node at (2.3, -0.3) {$O$};
	
	\draw [black, rounded corners=2mm] plot coordinates {(-4.5, -0.5) (-2, 2) (4.6, 2)};
	\draw [black, rounded corners=2mm] plot coordinates {(8.5, 0.5) (6, -2) (-0.6, -2)};
	
	\begin{scope}[shift={(-4,-1)}]
	\fill [black!10, draw=black] plot [smooth] coordinates {(0, 0) (1, 0.5) (1.3, 1.5) (1.1, 2.5) (2, 2.5) (2.9, 2.5) (2.7, 1.5) (3, 0.5) (4, 0)};
	\end{scope}
	
	\begin{scope}[shift={(4,1)}]
	\fill [black!30, draw=black] plot [smooth] coordinates {(0, 0) (1, 0.5) (1.25, 1.5) (1.1, 2.5) (2, 2.9) (2.9, 2.5) (2.75, 1.5) (3, 0.5) (4, 0)};
	\end{scope}
	
	\end{scope}
	
	\end{scope} 
	\end{tikzpicture}

	\caption{Theorem \ref{thm:Nbody} allows to merge an assigned collection of data into an exotic $N-$body solution of the Einstein constraint equations.}
	\label{dgm:gluing.figure}
\end{figure}

\begin{remark}
Let us recall from Subsection \ref{subs:regcon} that our data are all tacitly assumed to have only one end. If instead this is not the case, the same construction goes through anyway, provided each $(M_i,g_i,k_i)$ comes with a preferred, labelled end on which the gluing is performed: in this case the resulting triple will of course have multiple ends and indeed
\[
\# \ \textrm{ends} \ (M,g,k) = 1+\sum_{i=1}^{N}( \# \ \textrm{ends} \ (M_i,g_i,k_i) -1).
\]	
	
\end{remark}	

\begin{proof} First of all, one can find $\Lambda$ large enough that Theorem \ref{thm:main} is applicable to every single triple $(M_i,g_i,k_i)$ and, under the additional constraint $\varepsilon\Lambda\max_{i,j}\left\{\sin\left(\frac{\varphi(a_i,a_j)}{2}\right)\right\}>1$ we can make sure that each couple of domains $\hat{\Gamma}_{\theta_{i}/\left(1-\varepsilon\right), \varepsilon}\left(a_{i}\right)$ will indeed be mutually disjoint whenever $|a_i|>\Lambda$ for any $i$ and $\varphi(a_{i}, a_{j})>(1-\varepsilon)^{-1}(\theta_{i}+\theta_{j})$, as in the statement above. Thus, we employ our gluing theorem exactly $N$ times to construct on $\R^{n}\setminus \cup_{i=1}^{N}\hat{\Gamma}_{\left(1-\varepsilon\right)\theta_{i}, \varepsilon}\left(a_{i}\right)$ a smooth couple $(g,k)$  that agrees with each of our data $(g_{i}, k_{i})$ at the interface $\hat{\Omega}_{\theta_{i}, \varepsilon}\left(a_{i}\right)$ and is exactly Euclidean outside of $\hat{\Gamma}_{\theta_{i}/\left(1-\varepsilon\right), \varepsilon}\left(a_{i}\right)$.

	 As a second step, one can \textsl{fill in} the conical subregions by smoothly identifying the boundary of such manifold with the boundary of the conical domains $\hat{\Gamma}_{\left(1-\varepsilon\right)\theta_{i}, \varepsilon}\left(a_{i}\right)$ in each $M_{i}$. Correspondingly, one can extend the tensors $g$ and $k$ on the whole $M$.

We claim that the resulting triple $(M,g,k)$ satisfies all of our requirements and to that aim it is enough to check the last assertion. As remarked in Subsection \ref{subs:sobcont}, it follows from our general construction that 
\[\mathcal{E}\left(\hat{g}\right)\to \mathcal{E}\left(g\right) \ \ \textrm{as} \ \left|a\right|\to\infty 
\]
and similarly, for the linear momentum
\[\mathcal{P}\left(\hat{g}\right)\to \mathcal{P}\left(g\right) \ \ \textrm{as} \ \left|a\right|\to\infty.
\]
Now, since the reference background triple for our gluing is $(\R^{n}, \delta, 0)$ it follows at once that in our construction the components of the energy-momentum 4-vector add up exactly
\[\mathcal{E}=\sum_{i=1}^{N}\hat{\mathcal{E}}^{(i)}, \ \mathcal{P}=\sum_{i=1}^{N}\hat{\mathcal{P}}^{(i)} \]
where we are using the notation $\hat{\mathcal{E}}^{(i)}=\mathcal{E}\left(\hat{g}_{i}\right)$ and obviously $\hat{g}_{i}$ is gotten from $g_{i}$ by performing our gluing with respect to the cones of vertex $a_{i}$ and angles $\theta_{i}$ and $\theta_{i}/\left(1-\varepsilon\right)$.
As a result, in order to conclude it is enough to choose $\Lambda$ possibly a bit larger than before, namely large enough so that $\left|a_{i}\right|>\Lambda$ implies  $\left|\mathcal{E}(g_{i})-\hat{\mathcal{E}}^{(i)}\right|\leq\sigma/N$ (for every index $i=1,\ldots,N$) and analogously for the linear momentum.
\end{proof}


\

If finitely many compact subdomains $U_{1},\ldots, U_{N}$ are assigned in each $M_{1},\ldots, M_{N}$ we can exploit the gluing results by Corvino \cite{Cor00} and Corvino-Schoen \cite{CS06} to reduce to the case when every one of these is contained in a larger domain $V_{i}$ and $\partial V_{i}$ has a neighbourhood where $(g_{i}, k_{i})$ are exactly like in an annulus of a Schwarzschild or (more generally) Kerr solution. As a result, the prescribed position of $V_{1},\ldots, V_{N}$ can be described in terms of centers $x_{1},\ldots, x_{N}$ of finitely many Euclidean balls $B_{1},\ldots, B_{N}$. For each of our data $M_{1},\ldots, M_{N}$ we just need to make sure to pick an angle $\theta_{i}$ and a scaling factor $\tau$ small enough so that $\varphi(x_i,x_j)>\theta_i+\theta_j $ and of course $C_{\theta_{i}}\left(-x_{i}/\tau\right)$ contains the domain $V_{i}$. As a consequence, Theorem \ref{thm:Nbody} immediately implies constructibily of $N$-body initial data sets in the sense explained above, in substantial analogy with the main results contained in \cite{CCI09} (for the time-symmetric case) and \cite{CCI11} (for the general case). The only relevant difference with the present treatment is the behaviour at infinity of the tensors $g, k$, which in such works is prescribed to be exactly as in the Kerr solution outside a suitably large compact set, while we engineer data that are non-interacting on large time scales.

\bibliographystyle{plain}

\end{document}